\algnewcommand\And{\textbf{and} }
\newcommand{\consistencycheck}{\hyperref[alg:checkConsistent]{\textsc{CheckConsistency}}}
\newcommand{\consistenttypes}{\hyperref[alg:findConsistent]{\textsc{ConsistentTypes}}}
\newtheorem{theorem}{Theorem}[section]
\newtheorem{proposition}[theorem]{Proposition}
\newtheorem{corollary}[theorem]{Corollary}
\newtheorem{remark}[theorem]{Remark}
\theoremstyle{definition}
\newtheorem{definition}[theorem]{Definition}
\newtheorem{example}[theorem]{Example}
\colorlet{col1}{LimeGreen}
\colorlet{col2}{Orchid}
\colorlet{col3}{Orange}
\colorlet{col4}{Cerulean}
\colorlet{col5}{Goldenrod}
\colorlet{col6}{LightGray}
\colorlet{colGray}{LightGray}
\definecolor{colR}{rgb}{.932,.172,.172} 
\definecolor{colB}{rgb}{.255,.41,.884} 
\tikzstyle{vertex}=[circle, draw, fill=black, inner sep=0pt, minimum size=4pt]
\tikzstyle{smallvertex}=[circle, draw, fill=black, inner sep=0pt, minimum size=2pt]
\tikzstyle{edge}=[line width=1.5pt,black!50!white]
\tikzstyle{gridl}=[black!50!white]
\tikzstyle{axes}=[gridl,-latex]
\tikzstyle{midvertex}=[circle, draw, fill=black, inner sep=0pt, minimum size=3pt]
\tikzstyle{lnode}=[circle,white,draw=black!60!white,fill=black!60!white,inner sep=1pt, font=\scriptsize]
\tikzstyle{lnodesmall}=[circle,white,draw=black!60!white,fill=black!60!white,inner sep=1pt, font=\scriptsize]
\colorlet{colvR}{black!70!white}
\tikzstyle{lnodeR}=[circle,colvR,draw=colvR,fill=white,inner sep=1.2pt, font=\scriptsize]
\tikzstyle{vertexR}=[circle,thick,draw=colvR,fill=white,inner sep=0pt, minimum size=4.5pt]
\tikzstyle{midvertexR}=[circle,thick,draw=colvR, fill=white, inner sep=0pt, minimum size=3.25pt]
\tikzstyle{smallvertexR}=[circle,thick,draw=colvR, fill=white, inner sep=0pt, minimum size=2pt]
\tikzstyle{redge}=[edge,colR]
\tikzstyle{bedge}=[edge,colB]
\tikzstyle{arbitrarygraph}=[dashed,black!70!white,thick]
\tikzstyle{labelsty}=[font=\scriptsize]
\tikzstyle{indicatededge}=[pin={[pin distance=6pt,pin edge={thin,path fading=#1,colR}]5:},pin={[pin distance=6pt,pin edge={thin,path fading=#1,colR}]-5:},pin={[pin distance=6pt,pin edge={thin,path fading=#1,colR}]2:}]
\newcolumntype{C}{>{$}c<{$}} 
\newcommand{\sage}{\textsc{SageMath}}
\newcommand{\flexrilog}{\textsc{FlexRiLoG}}
\newcommand{\blue}{\text{blue}}
\newcommand{\red}{\text{red}}
\newcommand{\norm}[1]{\left\lVert#1\right\rVert}
\DeclareMathOperator{\NAC}{NAC}
\newcommand{\nac}[1]{\NAC_{#1}}
\newcommand{\nacC}[2]{\NAC_{#1}(#2)}
\newcommand{\NN}{\mathbb{N}}
\newcommand{\RR}{\mathbb{R}}
\newcommand{\CC}{\mathbb{C}}
\newcommand{\QQ}{\mathbb{Q}}
\newcommand{\ZZ}{\mathbb{Z}}
\newcommand{\ci}{i}
\newcommand{\hot}{\text{h.o.t.}}
\newcommand{\C}{\mathcal C}
\newcommand{\conj}[1]{\overline{#1}}
\newcommand{\Ktt}{{K_{3,3}}}
\DeclareMathOperator{\argmin}{arg\,min}
\DeclareMathOperator{\argmax}{arg\,max}
\DeclareMathOperator{\ord}{ord}
\DeclareMathOperator{\gap}{gap}
\DeclareMathOperator{\Ram}{ram}
\newcommand{\ram}{\Ram_{F(\C)/F(\C')}}
\DeclareMathOperator{\val}{Val}
\newcommand{\Val}[1]{\val({#1})}
\newcommand{\casesQ}[1]{\mathfrak{#1}}
\newcommand{\caseG}{\casesQ{g}}
\newcommand{\caseO}{\casesQ{o}}
\newcommand{\caseE}{\casesQ{e}}
\newcommand{\caseP}{\casesQ{p}}
\newcommand{\caseA}{\casesQ{a}}
\newcounter{casesQone}
\renewcommand{\thecasesQone}{(\alph{casesQone})}
\newcommand{\newCaseQone}{\refstepcounter{casesQone}\thecasesQone}
\newcommand{\IIplus}{\text{II}_+}
\newcommand{\IIminus}{\text{II}_-}
\newcommand{\IVplus}{\text{IV}_{\!+}}
\newcommand{\IVminus}{\text{IV}_{\!-}}
\newcommand\myeq[1]{\stackrel{\mathclap{\text{\footnotesize\mbox{#1}}}}{=}}
\newcommand{\colO}{\begin{tikzpicture}[baseline=($(a.base)!.16!(d.base)$), scale=0.4] 
			\node[smallvertex] (a) at (0,0) {};
			\node[smallvertex] (b) at (1,0) {};
			\node[smallvertex] (c) at (1,1) {};
			\node[smallvertex] (d) at (0,1) {};
			
			\draw[bedge] (a) to (b);
			\draw[redge] (b) to (c);
			\draw[bedge] (c) to (d);
			\draw[redge] (d) to (a);
		\end{tikzpicture}}
\newcommand{\colL}{\begin{tikzpicture}[baseline=($(a.base)!.16!(d.base)$), scale=0.4]
		\node[smallvertex] (a) at (0,0) {};
		\node[smallvertex] (b) at (1,0) {};
		\node[smallvertex] (c) at (1,1) {};
		\node[smallvertex] (d) at (0,1) {};
		
		\draw[bedge] (a) to (b);
		\draw[redge] (b) to (c);
		\draw[redge] (c) to (d);
		\draw[bedge] (d) to (a);
	\end{tikzpicture}}
\newcommand{\colR}{\begin{tikzpicture}[baseline=($(a.base)!.16!(d.base)$), scale=0.4]
		\node[smallvertex] (a) at (0,0) {};
		\node[smallvertex] (b) at (1,0) {};
		\node[smallvertex] (c) at (1,1) {};
		\node[smallvertex] (d) at (0,1) {};
		
		\draw[bedge] (a) to (b);
		\draw[bedge] (b) to (c);
		\draw[redge] (c) to (d);
		\draw[redge] (d) to (a);
	\end{tikzpicture}}
\newcommand{\colOR}{\begin{tikzpicture}[baseline=($(a.base)!.16!(d.base)$), scale=0.4] 
			\node[smallvertex] (a) at (0,0) {};
			\node[smallvertex] (b) at (1,0) {};
			\node[smallvertex] (d) at (0,1) {};
			
			\draw[bedge] (a) to (b);
			\draw[redge] (d) to (a);
		\end{tikzpicture}}
\newcommand{\colOL}{\begin{tikzpicture}[baseline=($(a.base)!.16!(c.base)$), scale=0.4] 
			\node[smallvertex] (a) at (0,0) {};
			\node[smallvertex] (b) at (1,0) {};
			\node[smallvertex] (c) at (1,1) {};
			
			\draw[bedge] (a) to (b);
			\draw[redge] (b) to (c);
		\end{tikzpicture}}
\title{On the Classification of Motions of Paradoxically Movable Graphs\footnote{Supported by the Austrian Science Fund (FWF): P31061, P31888 and W1214-N15.
	This project has received funding from the European Union’s Horizon 2020 research and innovation programme under the Marie Skłodowska-Curie grant agreement No 675789.}}
\author{%
Georg Grasegger\thanks{Johann Radon Institute for Computational and Applied Mathematics (RICAM), Austrian Academy of Sciences}
\and 
Jan Legersk\'y\thanks{Johannes Kepler University Linz, Research Institute for Symbolic Computation (RISC)} \thanks{Department of Applied Mathematics, Faculty of Information Technology, Czech Technical University in Prague}
\and 
Josef Schicho\footnotemark[3]
}
\date{}
\begin{document}

\maketitle
\begin{abstract}
  Edge lengths of a graph are called flexible if there exist infinitely 
	many non-congruent realizations of the graph in the plane
	satisfying these edge lengths.
	It has been shown recently that a graph has flexible edge lengths
	if and only if the graph has a special type of edge coloring called 
	NAC-coloring.
	We address the question how to determine all possible proper flexible
	edge lengths from the set of all NAC-colorings of a graph.
	We do so using restrictions to 4-cycle subgraphs.
\end{abstract}

Rigidity theory considers graphs with given labelings of edges by positive real numbers.
The number of realizations of a graph in $\RR^2$ 
such that the distances of adjacent vertices are equal to the labeling of the edges is widely studied.
Such a labeling is called flexible if the number of realizations, counted modulo rigid transformations, is infinite.
Otherwise, the labeling is called rigid.
A graph is called generically rigid if every labeling induced by a generic realization is rigid. However, it might have non-generic flexible labelings (see Figure~\ref{fig:threeprism}).
We call a graph movable if there is a proper flexible labeling, i.e., with infinitely many injective realizations, modulo rigid transformations.
In other words, we disallow realizations where two vertices coincide, 
but intersecting or partially overlapping edges are allowed (see Figure~\ref{fig:noninjective} for a non-injective example).
\begin{figure}[ht]
  \centering
  \begin{tikzpicture}[scale=1.5]
    \node[vertexR] (a) at (0,0) {};
		\node[vertexR] (b) at (1,0) {};
		\node[vertexR] (c) at (0.5,0.5) {};
		\node[vertexR] (d) at (0,1.5) {};
		\node[vertexR] (e) at (1,1.5) {};
		\node[vertexR] (f) at (0.5,1) {};
		\draw[edge] (a)edge(b) (b)edge(c) (c)edge(a) (d)edge(e) (e)edge(f) (f)edge(d) (a)edge(d) (b)edge(e) (c)edge(f);
  \end{tikzpicture}
  \qquad
  \begin{tikzpicture}[scale=1.5]
    \node[vertexR] (a) at (0,0) {};
		\node[vertexR] (b) at (1,0) {};
		\node[vertexR] (c) at (0.5,0.5) {};
		\node[vertexR] (d) at (0,1) {};
		\node[vertexR] (e) at (1,1) {};
		\node[vertexR] (f) at (0.5,1.5) {};
		\node[vertexR,rotate around=-25:(a)] (ds) at (d) {};
		\node[vertexR,rotate around=-25:(b)] (es) at (e) {};
		\node[vertexR,rotate around=-25:(c)] (fs) at (f) {};
		\draw[edge] (a)edge(b) (b)edge(c) (c)edge(a) (d)edge(e) (e)edge(f) (f)edge(d) (a)edge(d) (b)edge(e) (c)edge(f);
		\draw[edge,dashed] (ds)edge(es) (es)edge(fs) (fs)edge(ds) (a)edge(ds) (b)edge(es) (c)edge(fs);
  \end{tikzpicture}
  \caption{The three-prism graph is generically rigid (rigid labeling on the left) but has a proper flexible labeling (right).}
  \label{fig:threeprism}
\end{figure}
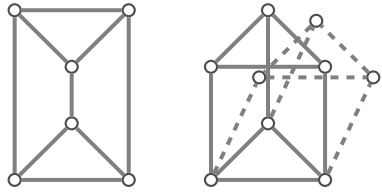

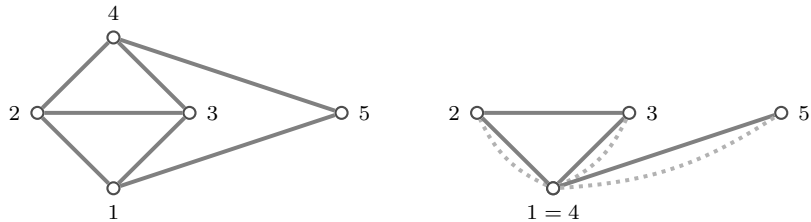
\begin{figure}[ht]
  \centering
  \begin{tikzpicture}[baseline={(0,1)}]
		\node[vertexR,label={[labelsty]-90:1}] (a) at (0,0) {};
		\node[vertexR,label={[labelsty]180:2}] (b) at (-1,1) {};
		\node[vertexR,label={[labelsty]0:3}] (c) at (1,1) {};
		\node[vertexR,label={[labelsty]0:5}] (e) at (3,1) {};
		\node[vertexR,label={[labelsty]:4}] (d) at (0,2) {};
		\draw[edge] (a) to (b);
		\draw[edge] (b) to (c);
		\draw[edge] (c) to (d);
		\draw[edge] (c) to (a);
		\draw[edge] (d) to (b);
		\draw[edge] (e) to (d);
		\draw[edge] (a) to (e);
  \end{tikzpicture}%
  \qquad
  \begin{tikzpicture}[baseline={(0,1)}]
    \node[vertexR] (a) at (0,0) {};
		\node[vertexR,label={[labelsty]180:2}] (b) at (-1,1) {};
		\node[vertexR,label={[labelsty]0:3}] (c) at (1,1) {};
		\node[vertexR,label={[labelsty]0:5}] (e) at (3,1) {};
		\node[vertexR,label={[labelsty]-90:$1=4$}] (d) at (a) {};
		\draw[edge, dotted, black!30!white] (a) to[bend right=-20] (b);
		\draw[edge] (b) to (c);
		\draw[edge] (c) to (d);
		\draw[edge, dotted, black!30!white] (c) to[bend right=-20] (a);
		\draw[edge] (d) to (b);
		\draw[edge] (e) to (d);
		\draw[edge, dotted, black!30!white] (a) to[bend right=15] (e);
  \end{tikzpicture}%
  \caption{A generically rigid graph with a flexible labeling. 
  The left realization cannot be continuously deformed but there are infinitely many 
  non-congruent realizations inducing the same labeling where two vertices overlap (right).
  Dotted edges symbolize that here edges overlap. For this graph, the labeling is flexible 
  if and only if there is a realization where the vertices 1 and 4 coincide.}
  \label{fig:noninjective}
\end{figure}

It is known from Pollaczek-Geiringer~\cite{Geiringer1927} and Laman~\cite{Laman1970},
that a graph $G=(V_G,E_G)$ is generically minimally rigid (Laman graph) if and only if $|E_G| = 2|V_G|-3$, and $|E_H|\leq 2|V_H|-3$
for all subgraphs $H$ of $G$ on at least two vertices.
Hence, graphs that do not have a spanning Laman subgraph are movable,
since a generic realization induces a proper flexible labeling.

The study of movable generically rigid graphs has a long history.
Dixon found two types of flexible labelings of the bipartite graph $K_{3,3}$ \cite{Dixon, WunderlichNineBar, Stachel}.
Walter and Husty~\cite{WalterHusty} proved that these labelings give indeed all proper flexible ones for $K_{3,3}$.
Burmester's focal point mechanism~\cite{Burmester1893},  
a graph with 9 vertices and 16 edges,
a 12-vertex graph studied by Kempe~\cite{Kempe1877},
and two constructions by Wunderlich~\cite{Wunderlich1954,Wunderlich1981} are further examples of movable generically rigid graphs.

The main question in this paper is the following: Find all proper flexible labelings of a given graph!
In \cite{flexibleLabelings}, we already provide a combinatorial characterization of graphs
with a flexible labeling: there is a flexible labeling if and only if the graph has a so-called NAC-coloring.
A NAC-coloring is a coloring of edges by two colors such that in every cycle,
either all edges have the same color or there are at least two edges of each color.
A drawback of the provided construction of a flexible labeling from a NAC-coloring
is that it does not give all possible ones.
Moreover, many Laman graphs have a NAC-coloring for which the constructed flexible labeling is not proper.
In \cite{movableGraphs} movable graphs are studied and methods for checking movability are presented.

In this paper we present methods giving necessary conditions on proper flexible labelings for a given graph.
This yields a full classification of all proper flexible labelings in some cases.
Animations with the movable graphs can be found in~\cite{LegerskyAnimations}.
The implementation of the concepts we introduce is part of the \sage{} package \flexrilog{}~\cite{flexrilog}.

The structure of the paper is the following. In Section~\ref{sec:preliminaries},
we recall NAC-colorings and some previous results.
In Section~\ref{sec:comparingLC} we derive algebraic relations for the edge lengths given some NAC-colorings
which are relevant for a motion.
Whether a NAC-coloring is possibly relevant is checked in Section~\ref{sec:ramification} using restrictions to 4-cycles.
Relevant NAC-colorings on all 4-cycle subgraphs yield a method for finding consistent NAC-colorings for the motion of 
the whole graph (Section~\ref{sec:consistency}).
Relevant NAC-colorings for motions of the 4-cycle graph with various edge lengths
 are investigated in Appendix~\ref{sec:NACsQuadrilateral}.
As a main example, in Section~\ref{sec:Q1classification}, we classify the flexible labelings of a graph with 7 vertices.

\section{Preliminaries}
\label{sec:preliminaries}

In this paper a graph $G=(V_G,E_G)$ is always connected and contains at least one edge.

\begin{definition}
	Let~$G$ be a graph and let $\lambda\colon E_G\rightarrow \RR_+$ be an edge labeling of~$G$.
	A map $\rho\colon V_G\rightarrow \RR^2$ is a \emph{realization of~$G$ compatible
	with~$\lambda$} if $\norm{\rho(u)-\rho(v)}=\lambda(uv)$ for all edges~$uv\in E_G$.
	We say that two realizations~$\rho_1$ and~$\rho_2$ are congruent
	if there exists a direct Euclidean isometry~$\sigma$ of~$\RR^2$ such that $\rho_1=\sigma \circ\rho_2$.
	The labeling $\lambda$ is called \emph{(proper) flexible}
	if the number of (injective) realizations of~$G$ compatible with~$\lambda$ up to congruence is infinite.
	We say that a graph is \emph{movable} if it has a proper flexible labeling.
\end{definition}
We are interested in generically rigid graphs with some paradoxical cases of flexible labelings. 
Given $\bar{u}\bar{v}\!\in\! E_G$, edge lengths $\lambda_{uv}=\lambda(uv)$ and unknown coordinates $(x_u,y_u)$ for $u\!\in\! V_G$
yield
\begin{align} \label{eq:mainSystemOfEquations}
	x_{\bar{u}}=0\,, \quad	y_{\bar{u}}&=0\,, \nonumber \\
	x_{\bar{v}}=\lambda_{\bar{u}\bar{v}}\,, \quad y_{\bar{v}}&=0\,, \\
	(x_u-x_v)^2+(y_u-y_v)^2&= \lambda_{uv}^2 \quad \text{ for all } uv \in E_G.	\nonumber
\end{align}
Note that we fix two adjacent vertices $\bar u,\bar v$ in order to get rid of translations and rotations.
Then the labeling $\lambda$ is flexible if and only if there are infinitely many solutions of~\eqref{eq:mainSystemOfEquations}.
It is proper flexible if infinitely many solutions satisfy	
$(x_u,y_u)\neq(x_v,y_v)$ for all distinct $u,v \in V_G$.
A graph can be generically rigid but still have a proper flexible labeling as Figure~\ref{fig:threeprism} shows.

In \cite{movableGraphs} we constructed all movable graphs up to $8$ vertices.

\begin{theorem}[\cite{movableGraphs}]\label{thm:listEightVertices}
	Let $G$ be a graph with at most $8$ vertices such that it has a spanning Laman subgraph
	and has no vertex of degree two.
	The graph $G$ is movable if and only if it is $K_{3,3}, K_{3,4}, K_{3,5}$, $K_{4,4}$,
	one of the graphs in Figure~\ref{fig:constDistClosures}, or a spanning subgraph thereof.
\end{theorem}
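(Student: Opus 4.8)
The plan is to treat this as a finite, computer-assisted classification, split into an \emph{easy} direction (exhibiting motions for the graphs in the list) and a \emph{hard} direction (certifying non-movability for all the others). First one reduces to the relevant situation: by the remark above, a graph with no spanning Laman subgraph is already movable via a generic realization, so one may assume $G$ is generically rigid, i.e.\ it \emph{is} a graph with a spanning Laman subgraph. The hypothesis of minimum degree at least three is a normalization (a degree-two vertex only contributes a reflection, and $G$ is movable if and only if the graph obtained by deleting such a vertex is, so degree-two vertices can be stripped off by a separate reduction). What remains is a \emph{finite} list of graphs on at most $8$ vertices, which can be enumerated up to isomorphism by computer; for each of them one must decide movability.

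For the \emph{sufficiency} direction one produces, for every graph named in the statement, an explicit proper flexible labeling. For $\Ktt$ this is Dixon's classical construction — place one colour class of vertices on the $x$-axis and the other on the $y$-axis — whose properness is guaranteed by~\cite{Dixon,WalterHusty}; the analogous Dixon-type placement works for $K_{3,4}$, $K_{3,5}$ and $K_{4,4}$. For the remaining graphs of Figure~\ref{fig:constDistClosures} one builds a motion from a well-chosen NAC-coloring using the construction of~\cite{flexibleLabelings} and then checks that the realizations it produces are injective for infinitely many values of the deformation parameter; when a single NAC-coloring does not suffice, one instead composes motions of suitable subgraphs. Finally, restricting a proper flexible labeling of $G$ to the edges of any spanning subgraph $H$ yields a proper flexible labeling of $H$, which gives the ``or a spanning subgraph thereof'' clause for free.

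The \emph{necessity} direction is the real obstacle, since one needs a combinatorial certificate that a given generically rigid graph admits \emph{no} proper flexible labeling. The tool is the constant distance closure (the object appearing in Figure~\ref{fig:constDistClosures}): along any motion compatible with a NAC-coloring $\delta$, the distance between certain non-adjacent pairs of vertices is forced to remain constant, so one may adjoin the corresponding implied edges; iterating this, and combining the information from all NAC-colorings (of $G$, and of all of its spanning subgraphs) that can occur simultaneously in one motion, produces a closure graph. If for every spanning subgraph $H\subseteq G$ and every admissible family of NAC-colorings of $H$ this closure is generically rigid on $V_G$, then in every flexible labeling of $G$ some pair of vertices must collide, so $G$ is not movable. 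One proves that this closure criterion is a genuine necessary condition for movability, implements it in \flexrilog{}, and runs it over all enumerated candidates; the graphs on which the certificate fails turn out to be exactly $K_{3,3}$, $K_{3,4}$, $K_{3,5}$, $K_{4,4}$ and those in Figure~\ref{fig:constDistClosures}.

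The main difficulty is therefore twofold: establishing that the constant distance closure really obstructs movability (and that, for graphs on at most $8$ vertices, no finer invariant is needed), and taming the combinatorial explosion — enumerating the candidate graphs and, for each, searching over spanning subgraphs and compatible families of NAC-colorings. A few borderline graphs may resist the automated criterion and need an ad hoc argument, either a directly exhibited motion or a direct algebraic proof that the system~\eqref{eq:mainSystemOfEquations} has no one-parameter family of injective solutions. Consequently the proof is computer-assisted throughout.
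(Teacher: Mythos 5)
This theorem is quoted from \cite{movableGraphs}; the present paper contains no proof of it, so there is no internal argument to compare yours against --- only the strategy of the cited reference. Your outline does track that strategy in its broad architecture: reduction to generically rigid candidates, explicit Dixon-type labelings for the complete bipartite graphs and NAC-coloring constructions for the rest on the sufficiency side, and a closure-type combinatorial certificate combined with computer enumeration on the necessity side (the label of Figure~\ref{fig:constDistClosures} even names the relevant object, the constant distance closure).

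There is, however, one step in your necessity argument that would fail as written. You claim that if the constant distance closure is \emph{generically rigid} on $V_G$, then $G$ is not movable. Generic rigidity never by itself obstructs movability --- the three-prism of Figure~\ref{fig:threeprism} is generically rigid and movable, and every candidate graph in this classification already has a spanning Laman subgraph, i.e.\ is generically rigid before any closure is taken. The edges adjoined by the closure carry constant but a priori unknown, possibly non-generic lengths along the motion, so a generically rigid closure could still support a non-generic flexible labeling. The criterion actually needed (and the one used in \cite{movableGraphs}) is that the closure be the \emph{complete} graph on $V_G$: then every pairwise distance is constant along the motion, which leaves only finitely many realizations modulo congruence and contradicts flexibility outright. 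Your phrase ``in every flexible labeling of $G$ some pair of vertices must collide'' likewise does not follow from rigidity of the closure; vertex collisions enter only when ruling out degenerate cases in which an adjoined distance vanishes. With the criterion corrected to completeness of the closure, the rest of your outline is consistent with the cited proof, including your acknowledgement that a few borderline graphs require ad hoc arguments beyond the automated certificate.
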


\begin{figure}[htb]
	\centering
		\begin{tabular}{cccccc}
			\begin{tikzpicture}[scale=1.4]
			    \node[vertex] (0) at (0.60, 0.40) {};
			    \node[vertex] (1) at (1, 0) {};
			    \node[vertex] (2) at (1, 1) {};
			    \node[vertex] (3) at (0.60, 1.4) {};
			    \node[vertex] (6) at (0, 1) {};
			    \node[vertex] (7) at (0, 0) {};
			    \draw[edge] (0)edge(1) (0)edge(7) (1)edge(7) (2)edge(3) (2)edge(6) (3)edge(6)  ;
			    \draw[edge] (0)edge(3) (1)edge(2)  (6)edge(7)  ;
			\end{tikzpicture}
			&
			\begin{tikzpicture}[scale=1.4]
			    \node[vertex] (0) at (0.60, 0.40) {};
			    \node[vertex] (1) at (1, 0) {};
			    \node[vertex] (2) at (1, 1) {};
			    \node[vertex] (3) at (0.60, 1.4) {};
			    \node[vertex] (4) at (0.50, 0.2) {};
			    \node[vertex] (6) at (0, 1) {};
			    \node[vertex] (7) at (0, 0) {};
			    \draw[edge] (0)edge(1) (0)edge(7) (1)edge(7) (2)edge(3) (2)edge(6) (3)edge(6) (4)edge(0) (4)edge(1) (4)edge(7)  ;
			    \draw[edge] (0)edge(3) (1)edge(2)  (6)edge(7)  ;
			\end{tikzpicture}
	    &
			\begin{tikzpicture}[scale=1.4]
			    \node[vertex] (0) at (0.70, 0.40) {};
			    \node[vertex] (1) at (1, 1) {};
			    \node[vertex] (2) at (0.30, 1.4) {};
			    \node[vertex] (3) at (0, 1) {};
			    \node[vertex] (4) at (0, 0) {};
			    \node[vertex] (5) at (0.30, 0.40) {};
			    \node[vertex] (6) at (1, 0) {};
			    \node[vertex] (7) at (0.70, 1.4) {};
			    \draw[edge] (0)edge(4) (0)edge(5) (0)edge(6) (1)edge(2) (1)edge(3) (1)edge(7) (2)edge(3) (2)edge(7) (3)edge(7) (4)edge(5) (4)edge(6) (5)edge(6)  ;
			    \draw[edge] (0)edge(7) (1)edge(6) (2)edge(5) (3)edge(4)  ;
			\end{tikzpicture}
			&
			\begin{tikzpicture}[scale=1.4]
			    \node[vertex] (0) at (1, 0) {};
			    \node[vertex] (1) at (-0.30, 0.30) {};
			    \node[vertex] (2) at (0.30, 0.70) {};
			    \node[vertex] (3) at (1, 1) {};
			    \node[vertex] (4) at (0.60, 1.4) {};
			    \node[vertex] (5) at (0, 1) {};
			    \node[vertex] (6) at (0.60, 0.40) {};
			    \node[vertex] (7) at (0, 0) {};
			    \draw[edge] (0)edge(6) (0)edge(7) (1)edge(2) (3)edge(4) (3)edge(5) (4)edge(5) (6)edge(7)  ;
			    \draw[edge] (0)edge(3) (1)edge(5) (1)edge(7) (2)edge(4) (2)edge(6) (4)edge(6) (5)edge(7)  ;
			\end{tikzpicture}
			&
			\begin{tikzpicture}[scale=1.4]
			    \node[vertex] (0) at (0.60, 0.40) {};
			    \node[vertex] (1) at (1, 0) {};
			    \node[vertex] (2) at (1, 1) {};
			    \node[vertex] (3) at (0.60, 1.4) {};
			    \node[vertex] (4) at (0.30, 0.50) {};
			    \node[vertex] (5) at (-0.30, 0.50) {};
			    \node[vertex] (6) at (0, 1) {};
			    \node[vertex] (7) at (0, 0) {};
			    \draw[edge] (0)edge(1) (0)edge(7) (1)edge(7) (2)edge(3) (2)edge(6) (3)edge(6)  ;
			    \draw[edge] (0)edge(3) (1)edge(2) (4)edge(5) (4)edge(6) (4)edge(7) (5)edge(6) (5)edge(7) (6)edge(7)  ;
			\end{tikzpicture}
			&
			\begin{tikzpicture}[scale=1.4]
			    \node[vertex] (0) at (0.60, 0.40) {};
			    \node[vertex] (1) at (0, 0) {};
			    \node[vertex] (2) at (1, 0) {};
			    \node[vertex] (3) at (0.80, 0.60) {};
			    \node[vertex] (4) at (0.30, 0.60) {};
			    \node[vertex] (5) at (1, 1) {};
			    \node[vertex] (6) at (0, 1) {};
			    \node[vertex] (7) at (0.60, 1.4) {};
			    \draw[edge] (0)edge(1) (0)edge(2) (1)edge(2) (3)edge(4) (3)edge(5) (3)edge(6) (3)edge(7) (4)edge(5) (4)edge(6) (4)edge(7) (5)edge(6) (5)edge(7) (6)edge(7)  ;
			    \draw[edge] (0)edge(7) (1)edge(6) (2)edge(5)  ;
			\end{tikzpicture}
			\\
			$L_1$ & $L_2$ &$L_3$ &$L_4$ &$L_5$ &$L_6$ 
		\end{tabular}
	\begin{tabular}{cccccc}
		\begin{tikzpicture}[scale=1*0.8]
		      \node[vertex] (3) at (0.5, -0.866025) {};
		      \node[vertex] (0) at (1., 0.) {};
		      \node[vertex] (4) at (0.5, 0.866025) {};
		      \node[vertex] (5) at (-0.5, 0.866025) {};
		      \node[vertex] (1) at (-1.,  0.) {};
		      \node[vertex] (2) at (-0.5, -0.866025) {};
	   	      \node[vertex] (6) at (0,0) {};
		      \draw[edge] (2)edge(3)  (3)edge(6) (6)edge(2) (1)edge(4) (1)edge(5) (4)edge(0) (0)edge(5);
		      \draw[edge] (2)edge(1) (6)edge(4) (6)edge(5) (0)edge(3);
		\end{tikzpicture}
		&
		\begin{tikzpicture}[scale=1*0.8]
		    \node[vertex] (0) at (-1.00, 0.000) {};
		    \node[vertex] (1) at (1.00, 0.000) {};
		    \node[vertex] (2) at (0.500, 0.866) {};
		    \node[vertex] (3) at (0.500, -0.866) {};
		    \node[vertex] (4) at (-0.500, -0.866) {};
		    \node[vertex] (5) at (-0.500, 0.866) {};
		    \node[vertex] (6) at (0.000, -0.300) {};
		    \node[vertex] (7) at (0.000, 0.300) {};
		    \draw[edge] (1)edge(3) (6)edge(7) (0)edge(5)  ;
		    \draw[edge] (3)edge(4) (3)edge(7) (4)edge(7) (1)edge(6)  ;
		    \draw[edge] (0)edge(6) (5)edge(7) (2)edge(5) (2)edge(7)  ;
		    \draw[edge] (1)edge(2) (0)edge(4)  ;
		\end{tikzpicture}
		&
		\begin{tikzpicture}[scale=1*0.8]
		    \node[vertex] (1) at (0.500, 0.866) {};
		    \node[vertex] (0) at (-0.500, 0.866) {};
		    \node[vertex] (2) at (-1.00, 0.000) {};
		    \node[vertex] (3) at (1.00, 0.000) {};
		    \node[vertex] (4) at (0.000, -0.433) {};
		    \node[vertex] (5) at (0.500, -0.866) {};
		    \node[vertex] (6) at (-0.500, -0.866) {};
		    \node[vertex] (7) at (0.000, 0.000) {};
		    \draw[edge] (2)edge(6) (1)edge(7)  ;
		    \draw[edge] (0)edge(7) (3)edge(5)  ;
		    \draw[edge] (5)edge(7) (5)edge(6) (6)edge(7) (4)edge(6) (4)edge(7) (1)edge(2) (0)edge(3) (4)edge(5)  ;
		    \draw[edge] (1)edge(3) (0)edge(2)  ;
		\end{tikzpicture}		
		&
		\begin{tikzpicture}[scale=1*0.8]
		    \node[vertex] (0) at (0.500, -0.866) {};
		    \node[vertex] (1) at (-0.500, -0.866) {};
		    \node[vertex] (2) at (0.500, 0.866) {};
		    \node[vertex] (3) at (-0.500, 0.866) {};
		    \node[vertex] (4) at (-1.00, 0.000) {};
		    \node[vertex] (5) at (1.00, 0.000) {};
		    \node[vertex] (6) at (0.300, -0.100) {};
		    \node[vertex] (7) at (-0.300, -0.100) {};
		    \draw[edge] (0)edge(7) (1)edge(4)  ;
		    \draw[edge] (0)edge(5) (1)edge(6)  ;
		    \draw[edge] (3)edge(4) (3)edge(7) (4)edge(7) (5)edge(6) (2)edge(6) (2)edge(5) (0)edge(1)  ;
		    \draw[edge] (2)edge(4) (6)edge(7) (3)edge(5)  ;
		\end{tikzpicture}
		&
		\begin{tikzpicture}[scale=1*0.8]
		    \node[vertex] (0) at (-0.588, -0.809) {};
		    \node[vertex] (1) at (0.588, -0.809) {};
		    \node[vertex] (2) at (0.951, 0.309) {};
		    \node[vertex] (3) at (-0.951, 0.309) {};
		    \node[vertex] (4) at (0.235, 0.324) {};
		    \node[vertex] (5) at (-0.235, 0.324) {};
		    \node[vertex] (6) at (0.000, -0.400) {};
		    \node[vertex] (7) at (0.000, 1.00) {};
		    \draw[edge] (1)edge(2) (0)edge(3)  ;
		    \draw[edge] (0)edge(6) (1)edge(6) (0)edge(1)  ;
		    \draw[edge] (2)edge(4) (5)edge(6) (4)edge(7) (2)edge(7)  ;
		    \draw[edge] (5)edge(7) (3)edge(5) (3)edge(7) (4)edge(6)  ;
		\end{tikzpicture}
		&
		\begin{tikzpicture}[scale=1*0.8]
		    \node[vertex] (0) at (0.000, -0.433) {};
		    \node[vertex] (1) at (-0.500, -0.866) {};
		    \node[vertex] (2) at (0.500, -0.866) {};
		    \node[vertex] (3) at (1.00, 0.000) {};
		    \node[vertex] (4) at (-1.00, 0.000) {};
		    \node[vertex] (5) at (0.000, 0.000) {};
		    \node[vertex] (6) at (0.500, 0.866) {};
		    \node[vertex] (7) at (-0.500, 0.866) {};
		    \draw[edge] (3)edge(7) (4)edge(6)  ;
		    \draw[edge] (0)edge(5) (1)edge(7) (4)edge(7) (3)edge(6) (2)edge(3) (2)edge(6) (1)edge(4)  ;
		    \draw[edge] (5)edge(6) (0)edge(2)  ;
		    \draw[edge] (5)edge(7) (0)edge(1)  ;
		\end{tikzpicture}
		\\
			$Q_1$ & $Q_2$ &$Q_3$ &$Q_4$ &$Q_5$ &$Q_6$ 
	\end{tabular}
	\begin{tabular}{ccccc}
			\begin{tikzpicture}[scale=0.761]
			\node[vertex] (5) at (0.5, -0.866025) {};
			\node[vertex] (4) at (1., 0.) {};
			\node[vertex] (2) at (0.5, 0.866025) {};
			\node[vertex] (1) at (-0.5, 0.866025) {};
			\node[vertex] (7) at (-1.,  0.) {};
			\node[vertex] (6) at (-0.5, -0.866025) {};
			\node[vertex] (3) at (0.8,-1.4) {};
			\node[vertex] (0) at (-0.8,-1.4) {};
			\draw[edge] (7)edge(0) (3)edge(4);
			\draw[edge]  (6)edge(5) (5)edge(4) (7)edge(4) (7)edge(6) (0)edge(3);
			\draw[edge] (2)edge(4) (2)edge(6);
			\draw[edge] (1)edge(5) (7)edge(1) ;
			\draw[edge] (2)edge(1);
			\draw[edge] (0)edge(6)  (5)edge(3) ;
		\end{tikzpicture}
		&
		\begin{tikzpicture}[scale=0.761]
			\node[vertex] (1) at (-0.8,-1.4) {};
			\node[vertex] (5) at (1., 0.) {};
			\node[vertex] (3) at (-1.,  0.) {};
			\node[vertex] (6) at (-0.5, 0.866025) {};
			\node[vertex] (2) at (0.5, 0.866025) {};
			\node[vertex] (4) at (-0.5, -0.866025) {};
			\node[vertex] (7) at (0.5, -0.866025) {};
			\node[vertex] (0) at (0.8,-1.4) {};
			\draw[edge] (5)edge(3) (2)edge(6) (4)edge(0);
			\draw[edge]  (2)edge(4) (0)edge(1) (5)edge(7);
			\draw[edge] (2)edge(5) (7)edge(4) (3)edge(6);
			\draw[edge] (1)edge(6) (7)edge(6) (3)edge(4) (1)edge(7) (5)edge(0);
		\end{tikzpicture}
		&
		\begin{tikzpicture}[scale=0.761]
			\node[vertex] (1) at (-0.8,-1.4) {};
			\node[vertex] (7) at (1., 0.) {};
			\node[vertex] (3) at (-1.,  0.) {};
			\node[vertex] (6) at (-0.5, -0.866025) {};
			\node[vertex] (2) at (0.5, -0.866025) {};
			\node[vertex] (5) at (-0.5, 0.866025) {};
			\node[vertex] (4) at (0.5, 0.866025) {};
			\node[vertex] (0) at (0.8,-1.4) {};
			\draw[edge] (7)edge(3) (2)edge(6) (5)edge(1);
			\draw[edge]  (2)edge(5) (7)edge(4) (0)edge(6);
			\draw[edge] (2)edge(7) (4)edge(5) (3)edge(6);
			\draw[edge] (4)edge(6) (3)edge(5) (7)edge(1) (0)edge(7);
			\draw[edge] (0)edge(1);
		\end{tikzpicture}
		&
		\begin{tikzpicture}[scale=1]
			\node[vertex] (a1) at (0.5, -0.866025) {};
			\node[vertex] (a2) at (1., 0.) {};
			\node[vertex] (a3) at (0.5, 0.866025) {};
			\node[vertex] (a4) at (-0.5, 0.866025) {};
			\node[vertex] (a5) at (-1.,  0.) {};
			\node[vertex] (a6) at (-0.5, -0.866025) {};
			\pgfmathsetmacro\hundredFiftycos{cos(150)};
			\pgfmathsetmacro\hundredFiftysin{sin(150)};
			\node[vertex] (a7) at (0.6*\hundredFiftycos, 0.6*\hundredFiftysin) {};
			\node[vertex] (a8) at (1.1*\hundredFiftycos, 1.1*\hundredFiftysin) {};
			\draw[edge] (a1)edge(a4) (a2)edge(a3) (a3)edge(a6) (a6)edge(a5) (a2)edge(a5);
			\draw[edge] (a5)edge(a4) (a6)edge(a1) (a3)edge(a4) (a2)edge(a1);
			\draw[edge] (a5)edge(a7) (a5)edge(a8) (a7)edge(a4) (a7)edge(a8) (a4)edge(a8);
		\end{tikzpicture}

		&
		\begin{tikzpicture}[scale=1.146]
		    \node[vertex] (0) at (0.000, 0.700) {};
		    \node[vertex] (1) at (-0.588, -0.809) {};
		    \node[vertex] (2) at (0.588, -0.809) {};
		    \node[vertex] (3) at (0.588, 0.309) {};
		    \node[vertex] (4) at (-0.588, 0.309) {};
		    \node[vertex] (5) at (-0.235, 0) {};
		    \node[vertex] (6) at (0.000, -0.400) {};
		    \node[vertex] (7) at (0.235, 0) {};
		    \draw[edge] (0)edge(3) (0)edge(4) (0)edge(5) (1)edge(2) (1)edge(4) (1)edge(6) (2)edge(3) (2)edge(6) (3)edge(7) (4)edge(7) (5)edge(6) (5)edge(7) (6)edge(7)  ;
		\end{tikzpicture} \\
		$S_1$ & $S_2$ & $S_3$ & $S_4$ & $S_5$
	\end{tabular}
	\caption{Maximal(w.r.t.\ being a spanning subgraph) non-bipartite movable graphs with a spanning Laman subgraph, at most 8~vertices and no vertex of degree two.}
	\label{fig:constDistClosures}
\end{figure}
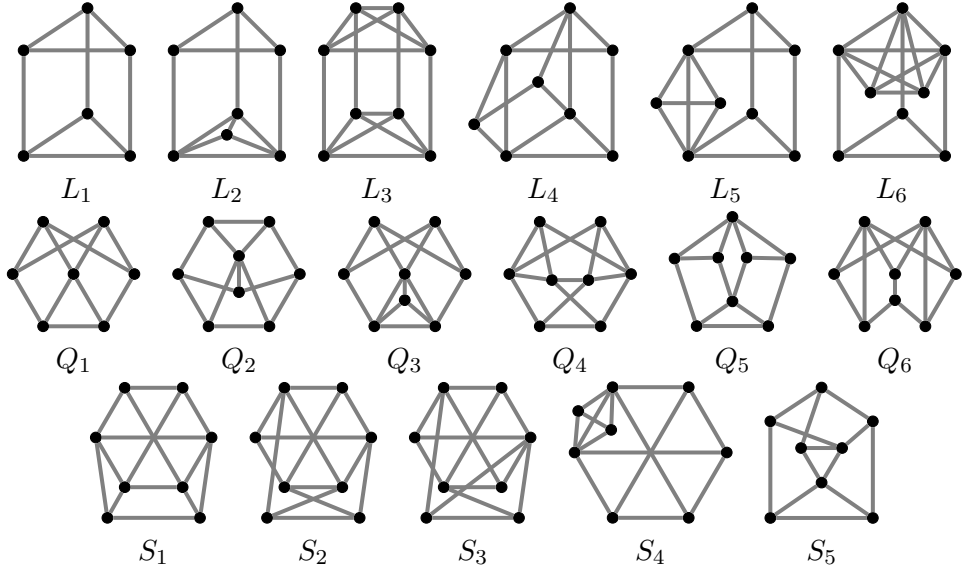
The methods presented in this paper allow to find conditions on the motions of movable graphs.
As examples we present the classifications of $L_i$ for all $i$, $K_{3,3}$ and $Q_1$.

As in \cite{flexibleLabelings,movableGraphs} we want
to transform the equations of \eqref{eq:mainSystemOfEquations} to new ones, where the sum of squares changes to a single product.
We then consider the equations in a complex function field and investigate valuations on the factors.
\begin{definition}
  An irreducible algebraic curve $\C$ in the zero set of \eqref{eq:mainSystemOfEquations}
  is called an \emph{algebraic motion of $(G,\lambda)$ (w.r.t.\ an edge $\bar{u}\bar{v}$)}. 
	For every $u,v\in V_G$ such that $uv\in E_G$, we define $W_{u,v}, Z_{u,v}$ in the complex function field $F(\C)$ by
	\begin{align*}
		W_{u,v}&=(x_v-x_u) + \ci (y_v-y_u)\,, \\
		Z_{u,v}&=(x_v-x_u) - \ci (y_v-y_u).	
	\end{align*}
\end{definition}
Note that $W_{u,v}=-W_{v,u}$ and $Z_{u,v}=-Z_{v,u}$, i.e., they depend on the order of~$u,v$.
Recall that a valuation $\nu:F(\C)\setminus\{0\} \rightarrow \ZZ$ has the properties
\begin{enumerate}
  \item $\nu(W\cdot Z)= \nu(W) + \nu(Z)$ for all $W,Z \in F(\C)\setminus\{0\}$, and
  \item $\nu(W+Z)\geq \min\{\nu(W), \nu(Z)\}$ for all $W,Z \in F(\C)\setminus\{0\}$ such that $W+Z\neq 0$.
\end{enumerate}
We consider only valuations trivial on $\CC$, i.e., $\nu(\CC)=\{0\}$. 
Hence $\nu(W_{u,v})=\nu(W_{v,u})$, which allows to write $\nu(W_e), \nu(Z_e)$ for $e\in E_G$.
Using \eqref{eq:mainSystemOfEquations}, we have
\begin{align}\label{eq:WZequationsLambda} 
	W_{\bar{u},\bar{v}}&=\lambda_{\bar{u}\bar{v}}\,,\quad Z_{\bar{u},\bar{v}}=\lambda_{\bar{u}\bar{v}}\,, \nonumber \\
	W_{u,v}Z_{u,v}&= \lambda_{uv}^2 \quad \text{ for all } uv \in E_G\,.
\end{align}
The following equations hold for every cycle $(u_1, \dots ,u_n, u_{n+1}=u_1)$ in $G$ by the definition of $W_{u,v}$ and $Z_{u,v}$: 
\begin{equation} \label{eq:equationCycles}
	\sum_{i=1}^n W_{u_i, u_{i+1}} =0\,, \qquad	\sum_{i=1}^n Z_{u_i, u_{i+1}} =0\,.
\end{equation}

The valuations of the $W_e$ and $Z_e$ can be used to show the relation between graphs with flexible labelings
and so-called NAC-colorings.
\begin{definition}
	Let~$G$ be a graph. A coloring of edges $\delta\colon  E_G\rightarrow \{\text{\blue{}, \red{}}\}$ 
	is called a \emph{NAC-coloring},
	if it is surjective and for every cycle in $G$,
	either all edges have the same color, or
	there are at least 2 edges in each color.
	The set of all NAC-colorings of~$G$ is denoted by $\nac{G}$.
	NAC-colorings $\delta, \conj{\delta} \in \nac{G}$ are called \emph{conjugate}
	if $\delta(e)\neq\conj{\delta}(e)$ for all $e\in E_G$.
	Figure~\ref{fig:nac} shows examples.
\end{definition}
\begin{figure}[ht]
  \centering
  \begin{tikzpicture}
    \node[vertex] (a) at (0,0) {};
    \node[vertex] (b) at (1,0) {};
    \node[vertex] (c) at (1,1) {};
    \node[vertex] (d) at (0,1) {};
    \draw[bedge] (a)edge(b) (c)edge(d);
    \draw[redge] (b)edge(c) (d)edge(a);
  \end{tikzpicture}
  \begin{tikzpicture}
    \node[vertex] (a) at (0,0) {};
    \node[vertex] (b) at (1,0) {};
    \node[vertex] (c) at (1,1) {};
    \node[vertex] (d) at (0,1) {};
    \draw[redge] (a)edge(b) (c)edge(d);
    \draw[bedge] (b)edge(c) (d)edge(a);
  \end{tikzpicture}
  \qquad\qquad
  \begin{tikzpicture}
    \node[vertex] (a) at (0,0) {};
    \node[vertex] (b) at (1,0) {};
    \node[vertex] (c) at (1,1) {};
    \node[vertex] (d) at (0,1) {};
    \draw[bedge] (a)edge(b);
    \draw[redge] (c)edge(d) (b)edge(c) (d)edge(a);
  \end{tikzpicture}
  \caption{Two conjugate NAC-colorings of the 4-cycle graph on the left and an incorrect coloring on the right.}
  \label{fig:nac}
\end{figure}
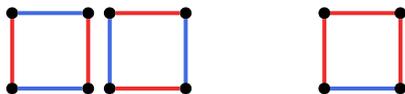

We refer to \cite{flexibleLabelings,movableGraphs} for more details on NAC-colorings.
The following result is proven in~\cite{flexibleLabelings}.
\begin{theorem}\label{thm:nacflexible}
	A connected graph with at least one edge has a flexible labeling if and only if it has a NAC-coloring.
\end{theorem}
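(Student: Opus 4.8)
The plan is to prove both implications. For ``NAC-coloring $\Rightarrow$ flexible labeling'' I exhibit an explicit one-parameter family of realizations with constant edge lengths, and for the converse I read off a NAC-coloring from a valuation on the function field of an algebraic motion, using the machinery of \eqref{eq:WZequationsLambda} and \eqref{eq:equationCycles} set up above. I expect the only genuinely delicate point to be the existence of a suitable valuation in the converse direction.

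\emph{NAC-coloring $\Rightarrow$ flexible labeling.} Let $\delta\in\nac{G}$, let $G_{\red}$, $G_{\blue}$ be the spanning subgraphs formed by the red, respectively blue, edges, and call the connected components of $G_{\red}$ (isolated vertices included) the \emph{red components}, and similarly the \emph{blue components}. Fix injective maps $\alpha$ from the red components to $\RR$ and $\beta$ from the blue components to $\RR$, and for a vertex $v$ write $\alpha(v),\beta(v)$ for the values of the components containing $v$. For $\theta\in\RR$ set $\rho_\theta(v)=\alpha(v)(1,0)+\beta(v)(\cos\theta,\sin\theta)$. If $uv$ is red then $\alpha(u)=\alpha(v)$, so $\rho_\theta(u)-\rho_\theta(v)=(\beta(u)-\beta(v))(\cos\theta,\sin\theta)$ and $\norm{\rho_\theta(u)-\rho_\theta(v)}=|\beta(u)-\beta(v)|$ is independent of $\theta$; symmetrically $\norm{\rho_\theta(u)-\rho_\theta(v)}=|\alpha(u)-\alpha(v)|$ for every blue edge. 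Hence all $\rho_\theta$ are compatible with a single labeling $\lambda$. The NAC condition makes $\lambda$ positive: a blue edge $uv$ with $\alpha(u)=\alpha(v)$ would lie, together with a red path from $u$ to $v$, on a cycle with a single blue edge, which is forbidden; symmetrically for red edges. Finally, surjectivity of $\delta$ provides a red edge and a blue edge; in each $\rho_\theta$ the blue one has the fixed direction $(1,0)$ while the red one has direction $(\cos\theta,\sin\theta)$, so at most finitely many $\theta$ give pairwise congruent realizations and $\lambda$ is flexible.

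\emph{Flexible labeling $\Rightarrow$ NAC-coloring.} If $\lambda$ is flexible, the solution set of \eqref{eq:mainSystemOfEquations} is infinite and hence contains an algebraic motion $\C$, whose function field $F(\C)$ has transcendence degree one over $\CC$ and so admits non-trivial valuations trivial on $\CC$. The key claim is that some such $\nu$ satisfies $\nu(W_e)\neq 0$ for an edge $e$. Otherwise every $W_e$ would be a rational function on $\C$ with neither zeros nor poles, hence a constant in $\CC^\times$, and by \eqref{eq:WZequationsLambda} the same would hold for every $Z_e$; then all differences $x_v-x_u$, $y_v-y_u$ would be constant along $\C$, and since $G$ is connected with two pinned vertices, $\C$ would reduce to a point, contradicting that it is a curve. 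Since \eqref{eq:WZequationsLambda} and \eqref{eq:equationCycles} are symmetric in $W$ and $Z$ and $\nu(Z_e)=-\nu(W_e)$, we may assume $\nu(W_e)<0$ for some $e$. Define $\delta(e)=\red$ if $\nu(W_e)\geq 0$ and $\delta(e)=\blue$ otherwise; this is surjective because $W_{\bar u,\bar v}=\lambda_{\bar u\bar v}$ gives $\nu(W_{\bar u\bar v})=0$, so $\bar u\bar v$ is red, while the chosen $e$ is blue.

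It remains to check the cycle condition. Fix a cycle with edges $e_1,\dots,e_n$; all $W_{e_i}$ are nonzero since $W_{e_i}Z_{e_i}=\lambda_{e_i}^2\neq 0$. From the first identity in \eqref{eq:equationCycles} and property~(ii) of valuations, the minimum $m$ of $\nu(W_{e_1}),\dots,\nu(W_{e_n})$ is attained at least twice; applying the same reasoning to the second identity together with $\nu(Z_{e_i})=-\nu(W_{e_i})$ shows the maximum $M$ of these numbers is also attained at least twice. If $M<0$ the cycle is all blue, and if $m\geq 0$ it is all red; in the remaining case $m<0\leq M$, so at least two edges attain $m$ (and are blue) and at least two attain $M$ (and are red), giving at least two edges of each color. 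Hence $\delta\in\nac{G}$, which completes the proof. The step I expect to require the most care is establishing the existence of a valuation with $\nu(W_e)\neq 0$: this is where one genuinely invokes that a non-constant rational function on an irreducible curve must have a zero or a pole, after which only the valuation axioms and the cycle equations \eqref{eq:equationCycles} are needed.
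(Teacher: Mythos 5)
Your proof is correct and follows essentially the same route as the original argument in \cite{flexibleLabelings}, which this paper only cites for Theorem~\ref{thm:nacflexible}: the forward direction via the rotating two-component construction $\rho_\theta(v)=\alpha(v)(1,0)+\beta(v)(\cos\theta,\sin\theta)$, and the converse via a valuation on $F(\C)$ applied to the $W_e$, $Z_e$ and the cycle equations \eqref{eq:equationCycles} --- precisely the machinery the paper sets up in Section~\ref{sec:preliminaries}. Both halves, including the positivity of the labels from the NAC condition and the ``minimum attained twice'' argument for the cycle condition, check out.
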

Some NAC-colorings can be assigned to an algebraic motion using valuations.
\begin{definition}
  Let $\C$ be an algebraic motion of $(G,\lambda)$.
  A NAC-coloring $\delta$ of $G$ is called \emph{active w.r.t.\ $\C$} if 
  there exists a valuation $\nu$ of $F(\C)$ and $\alpha\in\QQ$ such that for all $uv \in E_G$:
  \begin{equation}
  \label{eq:activeNAC}
  	\delta(uv)= \begin{cases}
  		\red{} &\text{if } \nu(W_{u,v})>\alpha \\
  		\blue{} &\text{otherwise.}
  	\end{cases} 
  \end{equation}
  The set of all active NAC-colorings w.r.t.~$\C$ is denoted by $\nacC{G}{\C}$.
\end{definition}
Actually, any surjective coloring obtained from a valuation using~\eqref{eq:activeNAC} is a NAC-coloring~\cite{flexibleLabelings}.

Let $\Lambda_G\subset \RR^{E_G}$ be the set of 
all proper flexible labelings of $G$.
By \emph{classification of motions}, or \emph{proper flexible labelings} of $G$
we mean the decomposition of the Zariski closure of $\Lambda_G$ into irreducible algebraic sets
$\Lambda_1, \dots\, \Lambda_k$.
Our goal is to provide equations defining the irreducible varieties and an instance for each of them
that is proper flexible.

Clearly, every proper flexible labeling is in some $\Lambda_i$,
but not every $\lambda\in\Lambda_i$ is flexible -- for instance,
it is not guaranteed that it is realizable over $\RR$,
since this would require also inequalities.
Notice also that a labeling in $\Lambda_i$ does not have all edge lengths necessarily positive,
but as long as they are not zero, the system~\eqref{eq:mainSystemOfEquations} does not change
due to taking squares, with the exception of the fixed edge --- switching the sign of
the fixed edge rotates the compatible realizations around the origin by $\pi$.
There also might be a proper subvariety containing flexible labelings that are not proper.

We conclude this section with a remark on genericity,
although we do not need a precise definition as we always speak about flexibility of a fixed labeling.
Algebraic geometry defines a property to be generic on a set $S$
if there is an algebraic subset~$X$ of $S$ of lower dimension such that the property 
holds for all elements of $S\setminus X$.
Our goal to classify all proper flexible labelings of a generically rigid graph $G$
can be viewed as describing the set $X$ of ``non-generic'' realizations.
We know that a realization is ``generic'', hence rigid, if it is injective and
induces a labeling avoiding $\Lambda_G$.
In Section~\ref{sec:consistency} we show that the only kind of flexible labeling 
for the three-prism graph in Figure~\ref{fig:threeprism} is when all 4-cycles are parallelograms.
This explains why the figure on the left shows a generic realization.
Notice that another widely used approach to define a generic realization is to require
that the coordinates are algebraically independent. In this sense,
the left realization in Figure~\ref{fig:threeprism} is not generic though it is rigid.

\section{Leading coefficients system}
\label{sec:comparingLC}
If a graph $G$ is spanned by a Laman graph and there is an algebraic motion of $(G,\lambda)$, 
then the edge lengths $\lambda$ must be non-generic.
In this section, we introduce a method deriving some algebraic equation(s) for $\lambda$.
In general, the method assumes a valuation of the function field of the algebraic motion,
but for certain active NAC-colorings,
all needed information can be recovered from the NAC-coloring itself.

Let $\C$ be an algebraic motion.
Let $\nu$ be a valuation yielding an active NAC-coloring of~$\C$ for some threshold~$\alpha$ using~\eqref{eq:activeNAC},
i.e.\ $|\{\nu(W_e)\colon e \in E_G\}|\geq 2$ as NAC-colorings are required to be surjective.
There is a parametrization of $\C$ such that $W_{u,v}$ 
and~$Z_{u,v}$ can be expressed as Laurent series in the parameter $t$
such that $\ord(W_{u,v})=\nu(W_{u,v})$ (see for instance~\cite{EnglerPrestel2005}).
We denote by $w_{u,v}$, resp.\ $z_{u,v}$, the leading coefficients of 
$W_{u,v}$, resp.\ $Z_{u,v}$ for all $uv\in E_G$.
Clearly, $w_{u,v}=-w_{v,u}$ and $z_{u,v}=-z_{v,u}$.
From the edge equations~\eqref{eq:WZequationsLambda} we have
\begin{align*}
	\lambda_{uv}^2 = W_{u,v} Z_{u,v} 
		= (w_{u,v}t^{\ord W_{u,v}} + \hot )(z_{u,v}t^{-\ord W_{u,v}} + \hot )\,,
\end{align*}
where \hot\ means \emph{higher order terms}. Hence, by expanding and comparing leading coefficients, i.e., setting $t=0$,
we have
\begin{equation}
	\label{eq:lcLambda}
	w_{u,v} z_{u,v} = \lambda_{uv}^2 \quad \text{ for all } uv \in E_G\,.
\end{equation}
The cycle conditions \eqref{eq:equationCycles} yield for every cycle $(u_1, \dots ,u_n, u_{n+1}=u_1)$ in $G$ the equations
\begin{equation*}
	\sum_{i\in\{1,\dots, n\}} (w_{u_i,u_{i+1}}t^{\ord W_{u_i,u_{i+1}}} + \hot ) = 0 =
	\sum_{i\in\{1,\dots, n\}} (z_{u_i,u_{i+1}}t^{-\ord W_{u_i,u_{i+1}}} + \hot )\,.
\end{equation*}
Comparing leading coefficients gives
\begin{equation}
	\label{eq:lcWZcycle}
	\sum_{i= \argmin_{j\in\{1,\dots, n\}} (\ord W_{u_j u_{j+1}})} \hspace{-1.5em} w_{u_i, u_{i+1}} =
	\sum_{i= \argmax_{j\in\{1,\dots, n\}} (\ord W_{u_j u_{j+1}})} \hspace{-1.5em} z_{u_i, u_{i+1}} = 0\,,
\end{equation}
Now, we eliminate $w_{u,v}$ and $z_{u,v}$ for all $uv \in E_G$,
e.\,g.\ by Gr\"obner basis computation taking the $\lambda_{uv}$ to be variables as well.
Assuming that $G$ is spanned by a Laman graph, counting parameters shows that 
we can expect to get at least one algebraic equation in $\lambda_{uv}$ for~$uv\in E_G$.

We illustrate the method on NAC-colorings satisfying specific assumptions.
\begin{proposition}
	\label{prop:orthogonalDiagonals}
	Let $\C$ be an algebraic motion of $(G, \lambda)$.
	Let $H = (v_1, v_2, v_3, v_4)$ be a 4-cycle subgraph of $G$.
	Let $\delta\in\nacC{G}{\C}$ be 
	such that $H$ is \blue{} and there exist \red{}
	paths $P_1$ and $P_2$ from $v_1$ to $v_3$ and from $v_2$ to~$v_4$ (compare Figure~\ref{fig:NACimpliesOrthoDiag}).
	Then $\lambda_{v_1v_2}^2 + \lambda_{v_3v_4}^2 = \lambda_{v_2v_3}^2 + \lambda_{v_1v_4}^2$.
	In~particular, the 4-cycle~$H$ has perpendicular diagonals
	in every injective realization in $\C$.
	
	\begin{figure}[htb]
	  \centering
	    \begin{tikzpicture}
				\node[vertex,label={[labelsty]below:$v_1$},indicatededge=east,rotate=0] (v1) at (0,0) {};
				\node[vertex,label={[labelsty,label distance=2pt]90:$v_2$},indicatededge=north,rotate=90] (v2) at (1.5,-1) {};
				\node[vertex,label={[labelsty]above:$v_3$},indicatededge=west,rotate=180] (v3) at (3,0) {};
				\node[vertex,label={[labelsty]above:$v_4$},indicatededge=south,rotate=-90] (v4) at (1.5,1) {};
				
				\draw[bedge] (v1)edge(v2) (v2)edge(v3) (v3)edge(v4) (v4)edge(v1);
				\draw[arbitrarygraph,colR] ($(v1)!0.5!(v3)$) ellipse (1.8cm and 0.2cm);
				\draw[arbitrarygraph,colR] ($(v2)!0.5!(v4)$) ellipse (0.15cm and 1.5cm);
			\end{tikzpicture}
	  \caption{NAC-coloring assumed in Proposition~\ref{prop:orthogonalDiagonals}.
	  	Ellipses in \red{} indicate red components.}
	  \label{fig:NACimpliesOrthoDiag}
  \end{figure}
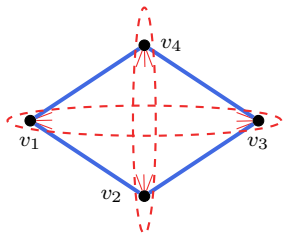
  
\end{proposition}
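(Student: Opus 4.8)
The plan is to run the leading-coefficient argument of this section on the $4$-cycle $H$. Since $\delta\in\nacC{G}{\C}$, fix a valuation $\nu$ of $F(\C)$ and a threshold $\alpha\in\QQ$ realising $\delta$ through~\eqref{eq:activeNAC}, together with a parametrisation of $\C$ expressing the $W_e$, $Z_e$ as Laurent series with $\ord=\nu$ and leading coefficients $w_e$, $z_e$; recall $w_e z_e=\lambda_e^2$ from~\eqref{eq:lcLambda}. The colouring hypotheses say $\nu(W_e)\le\alpha$ for the four edges $e$ of $H$ (they are blue), while $\nu(W_e)>\alpha$ for every edge $e$ of $P_1$ and of $P_2$ (they are red). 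Throughout I use the elementary fact that if $a+b=c$ in $F(\C)$ with $\nu(c)>\nu(a)$ and $\nu(c)>\nu(b)$, then $\nu(a)=\nu(b)$ and the leading coefficients of $a$ and $b$ are opposite.

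The main step is to pin down the leading coefficients on $H$. Set $W_{v_1,v_3}:=W_{v_1,v_2}+W_{v_2,v_3}$; by the definition of $W_{\cdot,\cdot}$ this also equals the sum of the $W$'s of the oriented edges along $P_1$, so $\nu(W_{v_1,v_3})\ge\min_{e\in P_1}\nu(W_e)>\alpha$, whereas $\nu(W_{v_1,v_2}),\nu(W_{v_2,v_3})\le\alpha$. The fact above gives $\nu(W_{v_1,v_2})=\nu(W_{v_2,v_3})$ and $w_{v_1,v_2}+w_{v_2,v_3}=0$. Applying the same reasoning to $W_{v_2,v_3}+W_{v_3,v_4}=W_{v_2,v_4}$ (where $\nu(W_{v_2,v_4})>\alpha$ via $P_2$) and to $W_{v_3,v_4}+W_{v_4,v_1}=W_{v_3,v_1}=-W_{v_1,v_3}$ (via $P_1$ again) shows that all four edges of $H$ have a common $W$-order $\beta$ and that, writing $w:=w_{v_1,v_2}\neq0$ (a leading coefficient is nonzero),
\[
 w_{v_1,v_2}=w,\qquad w_{v_2,v_3}=-w,\qquad w_{v_3,v_4}=w,\qquad w_{v_4,v_1}=-w.
\]
Since $\nu$ is trivial on $\CC$ and $W_eZ_e=\lambda_e^2\in\CC\setminus\{0\}$, we get $\nu(Z_e)=-\beta$ for all four edges of $H$; hence the four terms of the $Z$-part of~\eqref{eq:equationCycles} for the cycle $H$ all have order $-\beta$, and comparing leading coefficients --- equivalently, \eqref{eq:lcWZcycle} with $\argmax$ equal to $\{1,\dots,4\}$ --- yields $z_{v_1,v_2}+z_{v_2,v_3}+z_{v_3,v_4}+z_{v_4,v_1}=0$.

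It then only remains to substitute $\lambda_e^2=w_e z_e$ together with the values of the $w_e$:
\[
 \lambda_{v_1v_2}^2+\lambda_{v_3v_4}^2-\lambda_{v_2v_3}^2-\lambda_{v_1v_4}^2
 = w\bigl(z_{v_1,v_2}+z_{v_3,v_4}+z_{v_2,v_3}+z_{v_4,v_1}\bigr)=0,
\]
which is the asserted identity. For the geometric consequence I would invoke the polarisation identity $\norm{p_1-p_2}^2+\norm{p_3-p_4}^2-\norm{p_2-p_3}^2-\norm{p_1-p_4}^2=2\langle p_3-p_1,\,p_2-p_4\rangle$ valid for all $p_1,p_2,p_3,p_4\in\RR^2$: applied to any injective realisation of $H$ coming from a point of $\C$ (so $p_1\neq p_3$ and $p_2\neq p_4$), its left-hand side vanishes by the identity just proved, hence the diagonals $v_1v_3$ and $v_2v_4$ are perpendicular.

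I expect the only delicate part to be the middle paragraph: keeping the orientation conventions $W_{u,v}=-W_{v,u}$ consistent around the quadrilateral while chaining the three relations, and handling degeneracies --- $w\neq0$ is automatic, and a coincidence $W_{v_1,v_3}=0$ (or $W_{v_2,v_4}=0$) in $F(\C)$ causes no trouble since one then argues directly from $W_{v_1,v_2}=-W_{v_2,v_3}$. Everything else is formal once the leading-coefficient system of this section is in place.
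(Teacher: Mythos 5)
Your proof is correct and takes essentially the same route as the paper's: both derive $\nu(W_{v_1,v_2})=\nu(W_{v_2,v_3})=\nu(W_{v_3,v_4})=\nu(W_{v_4,v_1})$ with alternating leading coefficients $\pm w$ from the mixed blue/red cycles, then combine the $Z$-cycle relation for $H$ with $w_ez_e=\lambda_e^2$ to get the identity. The only differences are cosmetic: you package the red paths into $W_{v_1,v_3}$ and $W_{v_2,v_4}$ and prove the perpendicularity criterion via polarisation instead of citing it, and you explicitly cover the degenerate case $W_{v_1,v_3}=0$, which the paper leaves implicit.
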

\begin{proof}
	Let $\nu$ be a valuation making $\delta$ active.
	If a sum in a function field is zero,
	then there are at least two summands with the same valuation,
	namely the minimal one~\cite[Lecture~3]{Deuring}.
	Therefore, we have that $\nu(W_{v_1,v_2})=\nu(W_{v_2,v_3})$ 
	due to the cycle consisting of $P_1$ and the edges $v_1v_2$ and $v_2v_3$
	(blue edges have lower valuation than red ones in active NAC-colorings).
	Similarly we get $\nu(W_{v_1,v_4})=\nu(W_{v_4,v_3})$, $\nu(W_{v_2,v_1})=\nu(W_{v_1,v_4})$
	and $\nu(W_{v_2,v_3})=\nu(W_{v_3,v_4})$.
	Therefore, the equations \eqref{eq:lcWZcycle} give
	\begin{align*}
		w_{v_1,v_2} + w_{v_2,v_3} = w_{v_1,v_4} - w_{v_3,v_4} = w_{v_2,v_3} + w_{v_3,v_4} = w_{v_1,v_2} - w_{v_1,v_4} &= 0\,, \\
		z_{v_1,v_2} + z_{v_2,v_3} + z_{v_3,v_4} - z_{v_1,v_4} &= 0\,.
	\end{align*}
	The first line gives $w_{v_1,v_2}=- w_{v_2,v_3} = w_{v_3,v_4} =  w_{v_1,v_4}$.
	Hence, multiplying the second line by $w_{v_1,v_2}$ and combining it with \eqref{eq:lcLambda},
	yields $\lambda_{v_1v_2}^2 + \lambda_{v_3v_4}^2 = \lambda_{v_2v_3}^2 + \lambda_{v_1v_4}^2$.
	Perpendicularity of diagonals follows from the fact that
	a quadrilateral has orthogonal diagonals if and only if the sums of squares of opposite sides are equal.
\end{proof}

\begin{proposition}
	\label{prop:degenerateTriangle}
  Let $G$ be a graph with a flexible labeling $\lambda$ and an active
  NAC-coloring~$\delta$ for a given algebraic motion $\C$.
  Assume that $G$ has a 3-cycle $(u_1,u_2,u_3)$ and a 4-cycle
  $(v_1,v_2,v_3,v_4)$ both colored in blue, where possibly $v_1$ and $u_1$ might be equal
  and similarly $v_2$ and $u_2$.
  Furthermore $v_1$ and $u_1$, $v_3$ and $u_3$ as well as $u_2$, $v_2$ and $v_4$
  are in the same red components (i.\,e.\ connected monochromatic subgraph in red) respectively, see Figure~\ref{fig:NACimpliesDegTriangle}.
  Then the 3-cycle is a degenerate triangle, namely, the vertices $u_1,u_2,u_3$
  are collinear in all realizations in $\C$, or
  $\lambda_{v_1v_2}= \lambda_{v_1v_4}$ and $\lambda_{v_2v_3}= \lambda_{v_3v_4}$.
  
  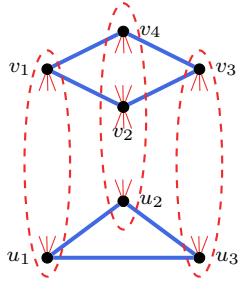
\begin{figure}[htb]
	  \centering
	    \begin{tikzpicture}[xscale=2]
				\node[vertex,label={[labelsty]above:$u_1$},indicatededge=north,rotate=90] (u1) at (0,0) {};
				\node[vertex,label={[labelsty]below:$u_2$},indicatededge=north,rotate=90] (u2) at (0.5,0.75) {};
				\node[vertex,label={[labelsty]below:$u_3$},indicatededge=north,rotate=90] (u3) at (1,0) {};
				\node[vertex,label={[labelsty]below:$v_1$},indicatededge=south,rotate=-90] (v1) at (0,2.5) {};
				\node[vertex,label={[labelsty,label distance=2pt]0:$v_2$},indicatededge=south,rotate=-90] (v2) at (0.5,2) {};
				\node[vertex,indicatededge=north,rotate=90] at (v2) {};
				\node[vertex,label={[labelsty]above:$v_3$},indicatededge=south,rotate=-90] (v3) at (1,2.5) {};
				\node[vertex,label={[labelsty]above:$v_4$},indicatededge=south,rotate=-90] (v4) at (0.5,3) {};
				
				\draw[bedge] (u1)edge(u2) (u1)edge(u3) (u2)edge(u3);
				\draw[bedge] (v1)edge(v2) (v2)edge(v3) (v3)edge(v4) (v4)edge(v1);
				\draw[arbitrarygraph,colR] ($1/2*(v3)+1/2*(u3)$) ellipse (0.15cm and 1.5cm);
				\draw[arbitrarygraph,colR] ($1/2*(v1)+1/2*(u1)$) ellipse (0.15cm and 1.5cm);
				\draw[arbitrarygraph,colR] ($1/2*(v4)+1/2*(u2)$) ellipse (0.15cm and 1.5cm);
			\end{tikzpicture}
	  \caption{NAC-coloring assumed in Proposition~\ref{prop:degenerateTriangle}.
	  	Ellipses in \red{} indicate red components.}
	  \label{fig:NACimpliesDegTriangle}
  \end{figure}
  
\end{proposition}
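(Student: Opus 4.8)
The plan is to run the same leading-coefficient bookkeeping as in Proposition~\ref{prop:orthogonalDiagonals} and then close the argument using the rigidity of the triangle. Fix a valuation $\nu$ making $\delta$ active for some threshold $\alpha$, so that $\nu(W_e)\le\alpha$ on blue edges and $\nu(W_e)>\alpha$ on red edges; thus every red edge has strictly larger $W$-valuation than every blue edge. As in the previous proof, any cycle running through exactly two blue edges and otherwise red connecting paths has its minimal $W$-valuation attained on those two blue edges, and since a vanishing sum attains its minimal valuation at least twice, the two blue valuations must coincide. Feeding in the red paths $u_1\rightsquigarrow v_1$, $u_3\rightsquigarrow v_3$ and the paths $u_2\rightsquigarrow v_2$, $u_2\rightsquigarrow v_4$ inside the common red component, I obtain
\[
\nu(W_{u_1,u_2})=\nu(W_{v_1,v_2})=\nu(W_{v_1,v_4}),\qquad
\nu(W_{u_2,u_3})=\nu(W_{v_2,v_3})=\nu(W_{v_3,v_4}).
\]
The decisive role of the blue $3$-cycle is to merge these two levels: for the all-blue triangle $(u_1,u_2,u_3)$ both $\sum W=0$ and $\sum Z=0$ in~\eqref{eq:equationCycles} must attain their extremal valuation twice, and with only three edges this is possible only if $\nu(W_{u_1,u_2})=\nu(W_{u_2,u_3})=\nu(W_{u_1,u_3})$. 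Hence all six blue edges share a single $W$-valuation.

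Passing to leading coefficients, the mixed cycles give $w_{u_1,u_2}=w_{v_1,v_2}=w_{v_1,v_4}=:\beta$ and $w_{u_2,u_3}=w_{v_2,v_3}=-w_{v_3,v_4}=:\gamma$, with $\beta,\gamma\neq0$ since they are leading coefficients. Because the four edges of the quadrilateral now share their $W$-valuation, they are simultaneously the maximal ones along the all-blue $4$-cycle, so the $Z$-part of~\eqref{eq:lcWZcycle} yields $z_{v_1,v_2}+z_{v_2,v_3}+z_{v_3,v_4}+z_{v_4,v_1}=0$. Eliminating the $z$'s via $z_e=\lambda_e^2/w_e$ from~\eqref{eq:lcLambda} and clearing denominators turns this into the single scalar relation
\[
\gamma\bigl(\lambda_{v_1v_2}^2-\lambda_{v_1v_4}^2\bigr)+\beta\bigl(\lambda_{v_2v_3}^2-\lambda_{v_3v_4}^2\bigr)=0 .
\]

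The last step, which I expect to be the genuine obstacle, is to convert this relation into the stated alternative. The key point is that the triangle is rigid: its side lengths are fixed, so $\kappa:=W_{u_1,u_2}/W_{u_2,u_3}$ satisfies $\lambda_{u_2u_3}^2\kappa^2+(\lambda_{u_1u_2}^2+\lambda_{u_2u_3}^2-\lambda_{u_1u_3}^2)\kappa+\lambda_{u_1u_2}^2=0$, a quadratic with real coefficients obtained by combining $W_{u_1,u_2}+W_{u_2,u_3}=W_{u_1,u_3}$, its companion $Z_{u_1,u_2}+Z_{u_2,u_3}=Z_{u_1,u_3}$, and $W_eZ_e=\lambda_e^2$ from~\eqref{eq:WZequationsLambda}. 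On the irreducible curve $\C$ a rational function satisfying a constant-coefficient polynomial is constant, so $\kappa\in\CC$ is constant and comparing leading coefficients gives $\kappa=\beta/\gamma$; moreover $u_1,u_2,u_3$ are collinear in every realization precisely when this constant ratio of edge vectors is real, that is, when the quadratic has real roots, that is, when the triangle is degenerate. Reading the displayed relation with $\beta,\gamma\neq0$ and the two length differences real then forces a clean dichotomy: either both differences vanish, which is exactly $\lambda_{v_1v_2}=\lambda_{v_1v_4}$ and $\lambda_{v_2v_3}=\lambda_{v_3v_4}$, or both are nonzero and $\beta/\gamma=-(\lambda_{v_1v_2}^2-\lambda_{v_1v_4}^2)/(\lambda_{v_2v_3}^2-\lambda_{v_3v_4}^2)\in\RR$, whence $\kappa\in\RR$ and the triangle is degenerate. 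The details needing care are the constancy of $\kappa$ together with its identification $\kappa=\beta/\gamma$, and the bookkeeping in the coincidence cases $v_1=u_1$ or $v_2=u_2$, where the relevant red paths become trivial but all the equalities above still persist.
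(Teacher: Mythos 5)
Your proposal is correct, and while your valuation bookkeeping (two-blue-edge cycles through red paths, plus the all-blue triangle merging the two valuation levels via the $W$- and $Z$-sums) matches the paper's proof step for step, your endgame is genuinely different. The paper feeds the same leading-coefficient equations from \eqref{eq:lcWZcycle} together with \eqref{eq:lcLambda} into a Gr\"obner-basis elimination of all $w$- and $z$-variables, obtaining a quadratic in $r=\lambda_{v_3v_4}^2-\lambda_{v_2v_3}^2$ whose discriminant factors into the four triangle-inequality terms times $(\lambda_{v_1v_2}-\lambda_{v_1v_4})^2(\lambda_{v_1v_2}+\lambda_{v_1v_4})^2$, and concludes from non-negativity of the discriminant of a real quantity. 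You instead do the elimination by hand: rigidity of the triangle makes $\kappa=W_{u_1,u_2}/W_{u_2,u_3}$ a root of a polynomial with constant real coefficients, hence a constant of $F(\C)$ equal to $\beta/\gamma$ (your quadratic for $\kappa$ checks out, as does the identification via leading coefficients, using that the two orders agree), and the four-term $z$-relation then reads as a linear relation in the two squared-length differences, forcing either both to vanish (the deltoid alternative; note that $\beta,\gamma\neq 0$ correctly rules out exactly one vanishing) or $\kappa\in\RR$, which by your quadratic and the triangle inequalities forces a degenerate triangle. The two computations are algebraically equivalent --- substituting $\kappa=(\lambda_{v_1v_2}^2-\lambda_{v_1v_4}^2)/r$ into your quadratic in $\kappa$ reproduces the paper's quadratic in $r$ up to sign conventions, with the same discriminant --- but your route buys three things: it is computer-free, it explains where the quadratic comes from (the law-of-cosines relation $W_{u_1,u_2}+W_{u_2,u_3}=W_{u_1,u_3}$, its $Z$-companion and \eqref{eq:WZequationsLambda} for the rigid triangle), and it actually proves the collinearity claim of the statement: in the degenerate case the double root is $\pm\lambda_{u_1u_2}/\lambda_{u_2u_3}$, so the $Z$-ratio equals $\kappa$ as well and $u_1,u_2,u_3$ are collinear at every point of $\C$, a conclusion the paper leaves implicit in the phrase ``the triangle is degenerate''. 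Two cosmetic slips, neither a gap: the merged level contains seven blue edges in general (three from the triangle, four from the quadrilateral; fewer under the coincidences $v_1=u_1$, $v_2=u_2$), not six; and you should say explicitly that $W_{u_2,u_3}\neq 0$ in $F(\C)$ (immediate from $W_{u_2,u_3}Z_{u_2,u_3}=\lambda_{u_2u_3}^2$), so that $\kappa$ is well defined.
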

\begin{proof}
	We proceed similarly as in the proof of Proposition~\ref{prop:orthogonalDiagonals}.
	Let $P_i$ be a \red{} path connecting vertices $u_i$ and $v_i$ for
	$i\in\{1,2,3\}$, $P_1$ or $P_2$ possibly empty,
	and $P_4$ a \red{} path connecting $v_2$ and $v_4$.
	Let $\nu$ be a valuation yielding $\delta$.
	Since the minimum valuation is attained at least twice in a cycle
	and $\nu(W_{u,v})$ of a \red{} edge $uv$ is strictly greater than for blue edges,
	we have that $\nu(W_{u_1,u_2})=\nu(W_{v_1,v_2})$  due to the cycle consisting of
	$P_1, v_1v_2, P_2, u_2u_1$. 
	Next, $\nu(W_{u_1,u_2})=\nu(W_{u_2,u_3})=\nu(W_{u_1,u_3})=\nu(W_{v_2,v_3})=\nu(W_{v_3,v_4})$
	from the triangle $(u_1,u_2,u_3)$ and the cycles formed by $P_2, u_2u_3, P_3, v_3v_2$ and 
	$v_2v_3, v_3v_4,P_4$. Also $\nu(W_{v_1,v_2})=\nu(W_{v_1,v_4})$ due to $v_1v_2,P_4,v_4v_1$.
	The following equations are obtained using \eqref{eq:lcWZcycle} for various cycles:
  \begin{align*}
		z_{u_1,u_2} + z_{u_2,u_3} - z_{u_1,u_3} = w_{u_1,u_2} + w_{u_2,u_3} - w_{u_1,u_3} &= 0  \text{ from } (u_1,u_2,u_3)\,, \\ 
		w_{u_2,u_3} - w_{v_2,v_3} &= 0              \text{ from } P_2, u_2u_3, P_3, v_3v_2\,, \\
		w_{v_2,v_3} + w_{v_3,v_4} &= 0                \text{ from } v_2v_3, v_3v_4,P_4\,, \\
    	w_{u_1,u_2} - w_{v_1,v_2} &= 0              \text{ from } P_1, v_1v_2, P_2, u_2u_1\,, \\
		w_{v_1,v_2} - w_{v_1,v_4} &= 0               \text{ from } v_1v_2,P_4,v_4v_1\,, \\
		z_{v_1,v_2} + z_{v_2,v_3} + z_{v_3,v_4} - z_{v_1,v_4}  &= 0  \text{ from } (v_1,v_2,v_3,v_4)\,. 
	\end{align*}
	We consider also the equations for edge lengths~\eqref{eq:lcLambda}.
  Using Gr\"obner bases we eliminate all $w$ and $z$ variables and get a single equation in the edge lengths.
  With $r=\lambda_{v_3v_4}^2 - \lambda_{v_2v_3}^2$, this equation is quadratic in $r$:
  \begin{align*}
    \lambda_{u_2u_3}^2(\lambda_{v_1v_2}^2 - \lambda_{v_1v_4}^2)^2 -
    (\lambda_{u_1u_2}^2 - \lambda_{u_1u_3}^2 + \lambda_{u_2u_3}^2)(\lambda_{v_1v_2}^2 - \lambda_{v_1v_4}^2)r + 
    \lambda_{u_1u_2}^2 r^2 = 0\,.
  \end{align*}
  The discriminant of this equation is
  \begin{align*}
    &(\lambda_{u_1u_2} - \lambda_{u_1u_3} - \lambda_{u_2u_3})(\lambda_{u_1u_2} + \lambda_{u_1u_3} - \lambda_{u_2u_3})
    (\lambda_{u_1u_2} - \lambda_{u_1u_3} + \lambda_{u_2u_3})\\
    \cdot&(\lambda_{u_1u_2} + \lambda_{u_1u_3} + \lambda_{u_2u_3})
    (\lambda_{v_1v_2} - \lambda_{v_1v_4})^2(\lambda_{v_1v_2} + \lambda_{v_1v_4})^2\,.
  \end{align*}
  In order to get a non-negative discriminant the triangle inequalities tell us that either the triangle is degenerate
  or $\lambda_{v_1v_2} = \lambda_{v_1v_4}$.
  The latter implies that also $\lambda_{v_3v_4} = \lambda_{v_2v_3}$.
\end{proof}

We assume now that the valuation $\nu$ yields only one active NAC-coloring $\delta$,
i.e., for all choices of the threshold $\alpha$, \eqref{eq:activeNAC} gives either $\delta$,
or all edges having the same color.  
This assumption implies that $\{\nu(W_{u,v})\colon uv \in E_G\}=\{0,\alpha\}$.
Then the equations \eqref{eq:lcWZcycle} yield 
\begin{equation}
	\label{eq:lcCycleNAC}
	\sum_{\substack{i\in\{1,\dots, n\}\\ \delta(u_i u_{i+1})=\blue{}}} \hspace{-1.5em} w_{u_i, u_{i+1}}=0
	\qquad	\text{ and } \qquad
	\sum_{\substack{i\in\{1,\dots, n\}\\ \delta(u_i u_{i+1})=\red{}}} \hspace{-1.5em} z_{u_i, u_{i+1}}=0
\end{equation}
for every cycle $C=(u_1, \dots , u_n)$ in $G$ that is not monochromatic.
For monochromatic cycles, the sums are over all edges in the cycle.

Notice that if $\nu$ yields another active NAC-coloring $\delta'$
for another threshold,
then the set $\{(\delta(e),\delta'(e))\colon e\in E_{G}\}$ has 3 elements.
This motivates the following definition.

\begin{definition}
	Let $G$ be a graph and $N\subseteq\nac{G}$.
	A NAC-coloring $\delta$ is called \emph{singleton} w.r.t.\ $N$
	if $|\{(\delta(e),\delta'(e))\colon e\in E_{G}\}|\,\neq 3$ for all $\delta'\in N$.
	We say just \emph{singleton} if $N=\nac{G}$.
\end{definition}

Therefore, if $\C$ is an algebraic motion of $(G, \lambda)$
and an active NAC-coloring $\delta\in \nacC{G}{\C}$ is a singleton w.r.t.\ $\nacC{G}{\C}$,
then we can apply the procedure described in this section using
equation~\eqref{eq:lcCycleNAC} instead of \eqref{eq:lcWZcycle}.
It does not matter whether we apply the procedure 
with a NAC-coloring or its conjugate, since it just
corresponds to swapping $z_{u,v}$ and $w_{u,v}$.

\section{Active NAC-colorings of motions restricted to subgraphs}
\label{sec:ramification}
In this section we exploit relations between active NAC-colorings of an algebraic motion of a graph
and active NAC-colorings of motions obtained by restrictions to (4-cycle) subgraphs.

A \emph{$\mu$-number} is a more precise characteristic of a NAC-coloring than being active.
Let $\Val{\C}$ denote all valuations of the function field 
$F(\C)$ surjective on $\ZZ$ and trivial on $\CC$.
\begin{definition}
	Let $\C$ be an algebraic motion of $(G,\lambda)$.
	For $\delta\in\nac{G}$ and a valuation~$\nu$, let
	\begin{equation*}
		\gap(\delta,\nu):=\max \left\{0,\, \min_{e\in E_G}\{\nu(W_e):\delta(e)=\red{}\}-\max_{e\in E_G}\{\nu(W_e):\delta(e)=\blue{}\} \right\}.
	\end{equation*}
	We define a $\mu$-number 
	\begin{equation*}
		\mu(\delta,\C)=\sum_{\nu \in \Val{\C}} \gap(\delta,\nu)\,.
	\end{equation*}
\end{definition}
Since there are only finitely many valuations 
$\nu\in \val(\C)$ such that $\nu(X)\neq 0$ for a given $X\in F(\C)\setminus\{0\}$ (see for instance \cite[Lecture 6]{Deuring}),
there are only finitely many valuations $\nu\in \Val{\C}$ 
such that $\gap(\delta,\nu)\neq 0$. Hence,
we have $\mu(\delta, \C)\in \NN_0$.

If $\delta$ is an active NAC-coloring due to a valuation $\nu$ and threshold $\alpha$,
then we have $\nu(W_e)> \alpha \geq \nu(W_e')$ for all $e,e'$ such that $\delta(e)=\red{}$ and $\delta(e')=\blue{}$,
hence ${\gap(\delta,\nu)>0}$.
Otherwise,
there must be edges~$e, e'$ such that $\delta(e)=\red{}$, $\delta(e')=\blue{}$ and $\nu(W_e)<\nu(W_{e'})$.
Then $\gap(\delta,\nu)=0$. These observations give the following remark.
\begin{remark}
	\label{rem:mu_active}
	The set of active NAC-colorings of an algebraic motion $\C$ satisfies
	\begin{equation*}
		\nacC{G}{\C}=\{\delta\in\nac{G}\,\colon \mu(\delta,\C)\neq 0\}\,.
	\end{equation*}
\end{remark}

Let $\C$ be an algebraic motion of a graph $G$
and $G'$ be a subgraph of $G$.
We recall a few definitions related to maps between algebraic curves and their function fields.
We consider the projection ${f\colon \C \rightarrow \C'}$ to the vertices of $G'$
and assume that $\C'$ is an algebraic motion of~$G'$.
The function field $F(\C)$ is an algebraic extension of the function field~$F(\C')$.
The degree of the map $f$ is defined as $\deg(f) := [F(\C):F(\C')]$,
which is the cardinality of the fiber $f^{-1}(y)$ for a generic point $y\in \C'$.
More about these notions can be found in~\cite{Shafarevich}. 
If $\nu\in\val(\C)$, then $\nu(F(\C'))=r \ZZ$ for some positive integer $r$
and there is a valuation~$\nu'\in\val(\C')$
such that $\nu(x) = r\nu'(x)$ for all $x\in F(\C')$.
We say that $\nu$ \emph{extends} $\nu'$.
The integer $r$ is called \emph{ramification index} of $\nu$ over $F(\C')$,
denoted by $\ram(\nu)$.
Since the residue field of a function field of a curve over complex numbers is always $\CC$,
we have
\begin{equation}\label{eq:ramForm}
	\sum_{\mathclap{\substack{\nu \in \Val{\C} \\ \nu \text{ ext. } \nu'}}}\ram(\nu) = \deg(f)\,,
\end{equation}
where $\nu'\in\Val{\C'}$;
see~\cite{Deuring,EnglerPrestel2005,Goldschmidt2003}.
Now we are ready to prove the main theorem of this section.
\begin{theorem}\label{thm:ramificationFormula}
	Let $\C$ be an algebraic motion of $(G,\lambda)$.
	Let $G'$ be a subgraph of $G$ and 
	$f:\C\rightarrow\C'$ be the projection of $\C$ into an algebraic motion $\C'$ of~$G'$.
	If $\delta'\in\nac{G'}$, then
	\begin{equation}\label{eq:ramificationFormula}
		\sum_{i=1}^k \mu(\delta_i,\C)=\mu(\delta',\C')\cdot \deg(f)\,,
	\end{equation}
	where $\{\delta_1,\dots,\delta_k\} = \{\delta\in\nac{G}\colon \delta|_{E_{G'}}=\delta'\}$. 
\end{theorem}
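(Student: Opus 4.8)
The plan is to compare the two sides of \eqref{eq:ramificationFormula} by fixing a valuation $\nu'\in\Val{\C'}$, looking at all valuations $\nu\in\Val{\C}$ extending it, and summing the local contributions $\gap(\delta_i,\nu)$. The key observation is that for any edge $e\in E_{G'}$ and any $\nu$ extending $\nu'$ with ramification index $r=\ram(\nu)$, we have $\nu(W_e)=r\,\nu'(W_e)$, since $W_e\in F(\C')$ for edges of $G'$. Hence, for a fixed $\nu$ extending $\nu'$ and any $\delta_i$ restricting to $\delta'$, the quantity $\min_{e}\{\nu(W_e):\delta_i(e)=\red{}\}$ is taken only over edges of $E_{G'}$ (edges outside $G'$ are not constrained by $\delta'$, but the coloring $\delta_i$ of those edges is exactly what distinguishes the different $\delta_i$'s); more precisely, I first want to reduce to the case where the minimum/maximum in the definition of $\gap$ is attained on $E_{G'}$. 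The crucial combinatorial fact is: since every cycle of $G'$ is a cycle of $G$, and $\delta_i$ is a NAC-coloring of $G$ restricting to the NAC-coloring $\delta'$ of $G'$, for each edge $e'\in E_{G}\setminus E_{G'}$ its color under $\delta_i$ is "pinched" between colors on $E_{G'}$ along any path, so that the extreme values of $\nu(W_e)$ over red (resp.\ blue) edges of $G$ under $\delta_i$ agree with those over red (resp.\ blue) edges of $G'$ under $\delta'$. This would give $\gap(\delta_i,\nu)=r\cdot\gap(\delta',\nu')$ for a suitable subset of the $\delta_i$ and $\gap(\delta_i,\nu)=0$ for the rest — but that is too strong in general, so the correct statement to aim for is that $\sum_{i=1}^k \gap(\delta_i,\nu) = r\cdot\gap(\delta',\nu')$ for each individual $\nu$ extending $\nu'$.

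Granting that pointwise identity, the theorem follows by summing:
\[
	\sum_{i=1}^k \mu(\delta_i,\C) = \sum_{i=1}^k \sum_{\nu\in\Val{\C}} \gap(\delta_i,\nu)
	= \sum_{\nu'\in\Val{\C'}} \sum_{\substack{\nu\in\Val{\C}\\ \nu\text{ ext. }\nu'}} \sum_{i=1}^k \gap(\delta_i,\nu)
	= \sum_{\nu'\in\Val{\C'}} \gap(\delta',\nu') \sum_{\substack{\nu\in\Val{\C}\\ \nu\text{ ext. }\nu'}} \ram(\nu),
\]
and the inner sum equals $\deg(f)$ by \eqref{eq:ramForm}, yielding $\mu(\delta',\C')\cdot\deg(f)$. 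Here I should note that every valuation of $F(\C)$ restricts to a valuation of $F(\C')$, so the reindexing of the double sum over $\nu\in\Val{\C}$ as a sum over pairs $(\nu',\nu)$ is legitimate; one has to be slightly careful that $\gap(\delta',\nu')=0$ automatically when the restriction is trivial, so those terms do not contribute spuriously.

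The main obstacle is establishing the pointwise identity $\sum_{i=1}^k \gap(\delta_i,\nu) = \ram(\nu)\cdot\gap(\delta',\nu')$. The heart of it is to show that for a fixed extension $\nu$, setting $\alpha := \ram(\nu)\cdot\max_{e\in E_{G'}}\{\nu'(W_e):\delta'(e)=\blue{}\}$ and $\beta := \ram(\nu)\cdot\min_{e\in E_{G'}}\{\nu'(W_e):\delta'(e)=\red{}\}$, the contribution $\gap(\delta_i,\nu)$ is governed by where the values $\nu(W_{e'})$ for $e'\in E_G\setminus E_{G'}$ fall relative to $\alpha$ and $\beta$: if $\delta_i$ colors every $e'$ with $\nu(W_{e'})>\alpha$ red and every $e'$ with $\nu(W_{e'})\le\beta$ blue (the "compatible" case), then $\gap(\delta_i,\nu)=\beta-\alpha=r\cdot\gap(\delta',\nu')$, while otherwise $\gap(\delta_i,\nu)=0$; and exactly one of the $\delta_i$ (up to the behaviour in the band $(\alpha,\beta]$, where I expect a telescoping argument) is compatible. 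I expect this to require a careful case analysis using the NAC-coloring cycle condition to rule out colorings that would make the gap negative or zero, together with the fact that along any path in $G$ the value $\nu(W_e)$ can only "jump" in a way consistent with the colors — essentially the same mechanism as in the proof of Proposition~\ref{prop:orthogonalDiagonals}, where a zero sum in a cycle forces two minimal valuations to coincide. Making the telescoping in the intermediate band precise, so that the $k$ gaps sum to exactly $r(\beta'-\alpha')$ rather than something smaller, is the delicate point.
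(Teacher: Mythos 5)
Your overall strategy coincides with the paper's: reduce to a per-valuation identity $\sum_{i=1}^k\gap(\delta_i,\nu)=\gap(\delta',\nu|_{F(\C')})$ and then sum over $\nu\in\Val{\C}$, regrouping by the restricted valuation $\nu'$ and invoking $\sum_{\nu\text{ ext. }\nu'}\ram(\nu)=\deg(f)$ together with the homogeneity $\gap(\delta',r\nu')=r\gap(\delta',\nu')$. That summation and reindexing part of your argument is complete and correct; your worry about trivial restrictions is moot, since $F(\C)/F(\C')$ is finite, so the restriction of any $\nu\in\Val{\C}$ has value group $r\ZZ$ with $r\geq 1$ and every $\nu$ extends exactly one $\nu'\in\Val{\C'}$.

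The step you explicitly leave open is, however, the heart of the proof, and your tentative formulation of it is not correct as stated: with $\alpha=\max\{\nu(W_e):e\in E_{G'},\ \delta'(e)=\blue\}$ and $\beta=\min\{\nu(W_e):e\in E_{G'},\ \delta'(e)=\red\}$, if some edges of $E_G\setminus E_{G'}$ have valuations strictly between $\alpha$ and $\beta$, then \emph{no single} extension $\delta_i$ attains the full gap $\beta-\alpha$; the gap is necessarily split among several of them, so the dichotomy ``one compatible $\delta_i$ with gap $\beta-\alpha$, the rest zero'' fails. The paper closes this as follows. List $\alpha=\alpha_0<\alpha_1<\dots<\alpha_n=\beta$, the elements of $[\alpha,\beta]\cap\{\nu(W_e):e\in E_G\}$, and for $i=0,\dots,n-1$ let $\gamma_i$ be the coloring obtained from $\nu$ with threshold $\alpha_i$; each $\gamma_i$ is a NAC-coloring of $G$ (any surjective threshold coloring from a valuation is one), restricts to $\delta'$, and satisfies $\gap(\gamma_i,\nu)=\alpha_{i+1}-\alpha_i$. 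Conversely, if some $\delta_j$ with $\delta_j|_{E_{G'}}=\delta'$ has $\gap(\delta_j,\nu)>0$, then all its red edges have strictly larger valuation than all its blue ones, so $\delta_j$ is itself a threshold coloring for $\nu$, and compatibility with $\delta'$ forces the threshold into $[\alpha,\beta)$, i.e.\ $\delta_j=\gamma_{i_j}$ for some $i_j$. Hence the nonzero gaps among the $\delta_j$ are exactly the differences $\alpha_{i+1}-\alpha_i$ and telescope to $\beta-\alpha=\gap(\delta',\nu|_{F(\C')})$. One also needs the easy degenerate case: if $\gap(\delta',\nu|_{F(\C')})=0$, there are $e_1,e_2\in E_{G'}$ with $\delta'(e_1)=\red$, $\delta'(e_2)=\blue$ and $\nu(W_{e_1})\leq\nu(W_{e_2})$, which forces $\gap(\delta_i,\nu)=0$ for every extension. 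With these insertions your plan becomes precisely the paper's proof.
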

\begin{proof}
	First, we prove that for every $\nu\in \Val{\C}$ we have
	\begin{equation}\label{eq:sumOfGaps}
		\gap(\delta',\nu|_{F(\C')})=\sum_{i=1}^k \gap(\delta_i,\nu)\,.
	\end{equation} 
	If $\gap(\delta',\nu|_{F(\C')})=0$, 
	then there exist edges $e_1,e_2\in E_{G'}$ such that $\delta'(e_1)=\red{}$,
	$\delta'(e_2)=\blue{}$ and $\nu|_{F(\C')}(W_{e_1})\leq\nu|_{F(\C')}(W_{e_2})$.
	Hence, $\gap(\delta_i,\nu)=0$ for all~$i$ since
	\begin{equation*}
		\min_{e\in E_G}\{\nu(W_e):\delta_i(e)=\red{}\}\leq \nu(W_{e_1})\leq\nu(W_{e_2}) \leq\max_{e\in E_G}\{\nu(W_e):\delta_i(e)=\blue{}\}\,,
	\end{equation*}	
	 and the claim holds.
	Otherwise, let
	\begin{align*}
		\alpha&:= \max_{e\in E_{G'}}\{\nu(W_e)\colon \delta'(e)=\blue{}\}\,, \text{ and } \\
		\beta&:= \min_{e\in E_{G'}}\{\nu(W_e)\colon \delta'(e)=\red{}\}\,.
	\end{align*}
	Let $\alpha=\alpha_0< \alpha_1< \dots < \alpha_n=\beta$ 
	be such that $\{\alpha_0,\dots,\alpha_n\}=[\alpha,\beta]\cap\{\nu(W_e) \colon e \in E_G\}$.
	For $i\in\{0,\dots,n-1\}$, let $\gamma_i$ be the active NAC-coloring obtained from $\nu$ with threshold~$\alpha_i$.
	All NAC-colorings $\gamma_i$ are extensions of $\delta'$, thus $\{\gamma_0, \dots,\gamma_{n-1}\}\subset\{\delta_1,\dots,\delta_k\}$.
	On the other hand, we have for all $j\in\{1,\dots,k\}$ that either $\gap(\delta_j, \nu)=0$, or 
	$\gap(\delta_j, \nu)=\alpha_{i_j+1}-\alpha_{i_j}=\gap(\gamma_{i_j},\nu)$ for some $i_j\in\{0,\dots,n-1\}$.
	In the latter case,~$\delta_j=\gamma_{i_j}$.
	Figure~\ref{fig:gaps} illustrates the two cases.
	By this we get
	\begin{equation*}
		\sum_{j=1}^k \gap(\delta_j,\nu)=\sum_{i=0}^{n-1} \gap(\gamma_{i},\nu)=\sum_{i=0}^{n-1} (\alpha_{i+1}-\alpha_{i})=\beta-\alpha=\gap(\delta',\nu|_{F(\C')})\,.
	\end{equation*}
	We can conclude the proof now.
	\begin{align*}
		\sum_{j=1}^k \mu(\delta_j,\C) &=\sum_{j=1}^k \sum_{\nu \in \Val{\C}} \hspace{-0.8em} \gap(\delta_j,\nu) = \hspace{-0.25cm}\sum_{\nu \in \Val{\C}} \sum_{j=1}^k \gap(\delta_j,\nu) \\ 
		&\myeq{\eqref{eq:sumOfGaps}} \sum_{\nu \in \Val{\C}} \hspace{-0.8em}\gap(\delta',\nu|_{F(\C')})
			= \hspace{-0.25cm}\sum_{\nu' \in \Val{\C'}} \sum_{\substack{\nu \in \Val{\C} \\ \nu \text{ ext. } \nu'}}\hspace{-0.8em}\gap(\delta',\nu|_{F(\C')}) \\ 
		&= \hspace{-0.25cm}\sum_{\nu' \in \Val{\C'}} \sum_{\substack{\nu \in \Val{\C} \\ \nu \text{ ext. } \nu'}}\hspace{-0.8em} \gap(\delta', \ram(\nu)\cdot\nu')\\
		&= \hspace{-0.25cm}\sum_{\nu' \in \Val{\C'}} \hspace{-1em} \gap(\delta', \nu')  \sum_{\mathclap{\substack{\nu \in \Val{\C} \\ \nu \text{ ext. } \nu'}}}\ram(\nu) \\
		&\myeq{\eqref{eq:ramForm}}\hspace{-0.25cm}\sum_{\nu' \in \Val{\C'}} \hspace{-1em} \gap(\delta', \nu') \cdot \deg(f) = \deg(f)\cdot \mu(\delta', \C')   \,.\qedhere
	\end{align*}
	
	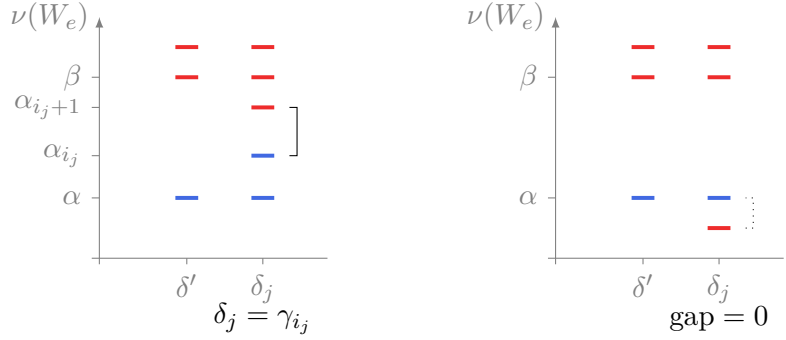
\begin{figure}[ht]
		\centering
			\begin{tikzpicture}[yscale=0.8]
				\begin{scope}
					\draw[axes] (0,-0.1) -- (0,4) node[anchor=east] {$\nu(W_e)$};
					\draw[gridl] (-0.1,0) -- (3,0);
					\draw[gridl] (-0.1,1) node[anchor=east] {$\alpha$}-- (0,1);
					\draw[gridl] (-0.1,3) node[anchor=east] {$\beta$}-- (0,3);
					\draw[gridl] (-0.1,2.5) node[anchor=east] {$\alpha_{i_j+1}$}-- (0,2.5);
					\draw[gridl] (-0.1,1.7) node[anchor=east] {$\alpha_{i_j}$}-- (0,1.7);
					\draw[gridl] (1.15,-0.1) node[anchor=north] {$\delta'$}-- (1.15,0);
					\draw[gridl] (2.15,-0.1) node[anchor=north] {$\delta_j$}-- (2.15,0);
					\begin{scope}
						\draw[bedge] (1,1) -- (1.3,1);
						\draw[redge] (1,3) -- (1.3,3);
						\draw[redge] (1,3.5) -- (1.3,3.5);
						\draw[bedge] (2,1) -- (2.3,1);
						\draw[bedge] (2,1.7) -- (2.3,1.7);
						\draw[redge] (2,2.5) -- (2.3,2.5);
						\draw[redge] (2,3) -- (2.3,3);
						\draw[redge] (2,3.5) -- (2.3,3.5);
					\end{scope}
					\draw[black,thin] (2.5,2.5) -- (2.6,2.5) -- (2.6,1.7) -- (2.5,1.7);
					\node at (2.15,-1) {$\delta_j=\gamma_{i_j}$};
				\end{scope}
				\begin{scope}[xshift=6cm]
					\draw[axes] (0,-0.1) -- (0,4) node[anchor=east] {$\nu(W_e)$};
					\draw[gridl] (-0.1,0) -- (3,0);
					\draw[gridl] (-0.1,1) node[anchor=east] {$\alpha$}-- (0,1);
					\draw[gridl] (-0.1,3) node[anchor=east] {$\beta$}-- (0,3);
					\draw[gridl] (1.15,-0.1) node[anchor=north] {$\delta'$}-- (1.15,0);
					\draw[gridl] (2.15,-0.1) node[anchor=north] {$\delta_j$}-- (2.15,0);
					\begin{scope}
						\draw[bedge] (1,1) -- (1.3,1);
						\draw[redge] (1,3) -- (1.3,3);
						\draw[redge] (1,3.5) -- (1.3,3.5);
						\draw[bedge] (2,1) -- (2.3,1);
						\draw[redge] (2,0.5) -- (2.3,0.5);
						\draw[redge] (2,3) -- (2.3,3);
						\draw[redge] (2,3.5) -- (2.3,3.5);
					\end{scope}
					\draw[black,thin,dotted] (2.5,0.5) -- (2.6,0.5) -- (2.6,1) -- (2.5,1);
					\node at (2.15,-1) {$\gap=0$};
				\end{scope}
			\end{tikzpicture}
		\caption{Gaps for different settings of the edge colorings $\delta_j$ for edges in $G$ (right column)
		 that are not in $G'$ (left column). Horizontal bars represent edges in their color at the level of their valuation.}
		\label{fig:gaps}
	\end{figure}
\end{proof}

As an immediate consequence of Theorem~\ref{thm:ramificationFormula} and Remark~\ref{rem:mu_active} we get that active NAC-colorings of a subgraph 
are the restrictions of active NAC-colorings of the whole graph.
\begin{corollary} \label{cor:muOfExtIsAlsoZero}
	$\nacC{G'}{\C'} = \{\delta|_{E_{G'}}\in \nac{G'} \colon \delta \in \nacC{G}{\C}\}$, with notation as in Theorem~\ref{thm:ramificationFormula}.  
\end{corollary}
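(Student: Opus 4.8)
The plan is to deduce the corollary directly from Theorem~\ref{thm:ramificationFormula} together with Remark~\ref{rem:mu_active}, by testing each NAC-coloring $\delta'$ of $G'$ against the summation formula \eqref{eq:ramificationFormula}. First I would establish the inclusion $\{\delta|_{E_{G'}} \colon \delta\in\nacC{G}{\C}\} \subseteq \nacC{G'}{\C'}$. Take $\delta\in\nacC{G}{\C}$; by Remark~\ref{rem:mu_active} we have $\mu(\delta,\C)\neq 0$. Set $\delta':=\delta|_{E_{G'}}$, which is a NAC-coloring of $G'$ (this needs a brief justification: the restriction of a NAC-coloring to a subgraph is again a NAC-coloring, because every cycle of $G'$ is a cycle of $G$; surjectivity holds precisely because $\delta'$ is taken from the set of \emph{active} NAC-colorings, which are surjective by the remark after the definition of active NAC-coloring — alternatively one should note that if $\delta'$ were monochromatic it would not be a NAC-coloring, but then $\mu(\delta',\C')$ is not defined; I would instead phrase things so that $\delta'$ ranges over $\nac{G'}$ and handle the surjectivity point carefully). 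Then $\delta$ is one of the $\delta_i$ in the theorem, so the left-hand side of \eqref{eq:ramificationFormula} is at least $\mu(\delta,\C)>0$, forcing $\mu(\delta',\C')\cdot\deg(f)>0$, hence $\mu(\delta',\C')\neq 0$, i.e.\ $\delta'\in\nacC{G'}{\C'}$ by Remark~\ref{rem:mu_active}.

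For the reverse inclusion $\nacC{G'}{\C'} \subseteq \{\delta|_{E_{G'}} \colon \delta\in\nacC{G}{\C}\}$, take $\delta'\in\nacC{G'}{\C'}$, so $\mu(\delta',\C')\neq 0$ and in particular $\mu(\delta',\C')\cdot\deg(f)>0$ (note $\deg(f)\geq 1$ always). By \eqref{eq:ramificationFormula}, $\sum_{i=1}^k\mu(\delta_i,\C)>0$ where $\{\delta_1,\dots,\delta_k\}=\{\delta\in\nac{G}\colon\delta|_{E_{G'}}=\delta'\}$; since each summand is a non-negative integer, at least one $\delta_i$ satisfies $\mu(\delta_i,\C)>0$. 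In particular the set $\{\delta_1,\dots,\delta_k\}$ is nonempty, and for that $\delta_i$ we have $\delta_i\in\nacC{G}{\C}$ by Remark~\ref{rem:mu_active}, with $\delta_i|_{E_{G'}}=\delta'$. This exhibits $\delta'$ as the restriction of an active NAC-coloring of $G$, completing the inclusion and hence the equality.

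The main subtlety — not really an obstacle, but the point requiring care — is the bookkeeping about what counts as a NAC-coloring on the subgraph, namely surjectivity and the handling of the corner case where an extension $\delta_i$ might be monochromatic on $E_{G'}$ or where $\delta'$ sits on the boundary of applicability of the $\mu$-number. Since the $\mu$-number $\mu(\delta',\C')$ is defined for any $\delta'\in\nac{G'}$ via $\gap$, and $\gap$ is defined purely in terms of the valuations of the $W_e$, the formula \eqref{eq:ramificationFormula} of Theorem~\ref{thm:ramificationFormula} applies to every $\delta'\in\nac{G'}$ with no side conditions, so both directions above go through verbatim once one observes that a restriction of a NAC-coloring of $G$ to $G'$ is a NAC-coloring of $G'$ (surjectivity being inherited from activeness, as active NAC-colorings are surjective). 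I would therefore keep the proof to a few lines, simply chaining Remark~\ref{rem:mu_active}, the non-negativity and integrality of $\mu$, and \eqref{eq:ramificationFormula}.

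\begin{proof}
	We use Remark~\ref{rem:mu_active}, which says $\nacC{G}{\C}=\{\delta\in\nac{G}\colon\mu(\delta,\C)\neq 0\}$, and analogously for $G'$.
	Recall also that $\mu(\delta,\C)\in\NN_0$ for every NAC-coloring $\delta$, and that $\deg(f)\geq 1$.

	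($\subseteq$) Let $\delta\in\nacC{G}{\C}$ and put $\delta':=\delta|_{E_{G'}}$.
	Every cycle of $G'$ is a cycle of $G$, so $\delta'$ satisfies the NAC condition on all cycles of $G'$;
	moreover $\delta'$ is surjective, since active NAC-colorings are surjective, hence $\delta'\in\nac{G'}$.
	Then $\delta$ is one of the colorings $\delta_1,\dots,\delta_k$ appearing in Theorem~\ref{thm:ramificationFormula} for this $\delta'$, so by~\eqref{eq:ramificationFormula}
	\begin{equation*}
		\mu(\delta',\C')\cdot\deg(f)=\sum_{i=1}^k\mu(\delta_i,\C)\geq\mu(\delta,\C)>0\,.
	\end{equation*}
	Since $\deg(f)\geq 1$, this forces $\mu(\delta',\C')\neq 0$, i.e.\ $\delta'\in\nacC{G'}{\C'}$ by Remark~\ref{rem:mu_active}.

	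($\supseteq$) Let $\delta'\in\nacC{G'}{\C'}$, so $\mu(\delta',\C')\neq 0$ by Remark~\ref{rem:mu_active}, hence $\mu(\delta',\C')\cdot\deg(f)>0$.
	By~\eqref{eq:ramificationFormula},
	\begin{equation*}
		\sum_{i=1}^k\mu(\delta_i,\C)=\mu(\delta',\C')\cdot\deg(f)>0\,,
	\end{equation*}
	where $\{\delta_1,\dots,\delta_k\}=\{\delta\in\nac{G}\colon\delta|_{E_{G'}}=\delta'\}$.
	Each summand is a non-negative integer, so $k\geq 1$ and $\mu(\delta_i,\C)>0$ for some $i$.
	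For that $i$ we have $\delta_i\in\nacC{G}{\C}$ by Remark~\ref{rem:mu_active}, and $\delta_i|_{E_{G'}}=\delta'$.
	Thus $\delta'\in\{\delta|_{E_{G'}}\in\nac{G'}\colon\delta\in\nacC{G}{\C}\}$, as required.
\end{proof}
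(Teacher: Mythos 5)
Your proof is correct in substance and takes exactly the route the paper intends: the paper states this corollary as an immediate consequence of Theorem~\ref{thm:ramificationFormula} and Remark~\ref{rem:mu_active} without writing out a proof, and your two inclusions are precisely that argument made explicit. The one blemish is in the forward inclusion, where you assert that $\delta':=\delta|_{E_{G'}}$ is surjective ``since active NAC-colorings are surjective'' --- this does not follow (the restriction of a surjective coloring to a subgraph can be monochromatic); however, this is harmless here because the right-hand side of the corollary is already filtered by the condition $\delta|_{E_{G'}}\in\nac{G'}$, so you only ever need the implication for those $\delta$ whose restriction happens to be a NAC-coloring, and for these the rest of your argument goes through verbatim.
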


In order to classify motions of a graph,
we use restrictions to 4-cycles.
Now we focus on types of NAC-colorings and motions of a 4-cycle.
 \begin{definition}
	Let $C_4=(v_1,v_2,v_3,v_4)$ be a 4-cycle.
	The \emph{type} of $\delta\in\nac{C_4}$ is
	\begin{itemize}
		 \item $O$ (\,$\colO{}$) if $\delta(v_1v_2)=\delta(v_3v_4)$, or
		 \item $L$ (\,$\colL{}$) if $\delta(v_1v_2)=\delta(v_1v_4)$, or
		 \item $R$ (\,$\colR{}$) otherwise, see also Figure~\ref{fig:quadrilateral}.
	\end{itemize}
\end{definition}
The terminology comes from the edge with the same color as the bottom one
if the vertices of the 4-cycle are numbered counterclockwise starting with the bottom left,
see Figure~\ref{fig:quadrilateral}.
 
 \begin{figure}[ht]
  \centering
  	\begin{tabular}{cc}
  	\begin{tikzpicture}[scale=0.9, baseline=0.5cm]
		    \node[vertex] (a) at (0,0) {};
			\node[vertex] (b) at (2,0) {};
			\node[vertex] (c) at (1.8,0.8) {};
			\node[vertex] (d) at (0.2,1) {};
			\draw[edge] (a) to node[below] {$\lambda_{12}$} (b);
			\draw[edge] (b) to node[right] {$\lambda_{23}$} (c);
			\draw[edge] (c) to node[above] {$\lambda_{34}$} (d);
			\draw[edge] (d) to node[left] {$\lambda_{14}$} (a);
			\node[below left=0.1cm] at (a) {$v_1$};
			\node[below right=0.1cm] at (b) {$v_2$};
			\node[above right=0.1cm] at (c) {$v_3$};
			\node[above left=0.1cm] at (d) {$v_4$};
	\end{tikzpicture}
	&
    \begin{tabular}{c|c|c}
		\begin{tikzpicture}[scale=1.]
			\node[vertex] (a) at (0,0) {};
			\node[vertex] (b) at (1,0) {};
			\node[vertex] (c) at (.9,0.4) {};
			\node[vertex] (d) at (0.1,0.5) {};		
			\draw[bedge] (a) to (b);
			\draw[redge] (b) to (c);
			\draw[redge] (c) to (d);
			\draw[bedge] (d) to (a);
		\end{tikzpicture}
		&
		\begin{tikzpicture}[scale=1.]
			\node[vertex] (a) at (0,0) {};
			\node[vertex] (b) at (1,0) {};
			\node[vertex] (c) at (.9,0.4) {};
			\node[vertex] (d) at (0.1,0.5) {};
			\draw[bedge] (a) to (b);
			\draw[redge] (b) to (c);
			\draw[bedge] (c) to (d);
			\draw[redge] (d) to (a);
		\end{tikzpicture}
		&
	    \begin{tikzpicture}[scale=1.]
	     	\node[vertex] (a) at (0,0) {};
			\node[vertex] (b) at (1,0) {};
			\node[vertex] (c) at (.9,0.4) {};
			\node[vertex] (d) at (0.1,0.5) {};
			\draw[bedge] (a) to (b);
			\draw[bedge] (b) to (c);
			\draw[redge] (c) to (d);
			\draw[redge] (d) to (a);
		\end{tikzpicture}
		\\
		\begin{tikzpicture}[scale=1.]
			\node[vertex] (a) at (0,0) {};
			\node[vertex] (b) at (1,0) {};
			\node[vertex] (c) at (.9,0.4) {};
			\node[vertex] (d) at (0.1,0.5) {};		
			\draw[redge] (a) to (b);
			\draw[bedge] (b) to (c);
			\draw[bedge] (c) to (d);
			\draw[redge] (d) to (a);
		\end{tikzpicture}
		&
		\begin{tikzpicture}[scale=1.]
			\node[vertex] (a) at (0,0) {};
			\node[vertex] (b) at (1,0) {};
			\node[vertex] (c) at (.9,0.4) {};
			\node[vertex] (d) at (0.1,0.5) {};
			\draw[redge] (a) to (b);
			\draw[bedge] (b) to (c);
			\draw[redge] (c) to (d);
			\draw[bedge] (d) to (a);
		\end{tikzpicture}
		&
		\begin{tikzpicture}[scale=1.]
	     	\node[vertex] (a) at (0,0) {};
			\node[vertex] (b) at (1,0) {};
			\node[vertex] (c) at (.9,0.4) {};
			\node[vertex] (d) at (0.1,0.5) {};
			\draw[redge] (a) to (b);
			\draw[redge] (b) to (c);
			\draw[bedge] (c) to (d);
			\draw[bedge] (d) to (a);
		\end{tikzpicture}
		\\
		$L = \colL$ & $O = \colO$ & $R = \colR$
	\end{tabular}
	\end{tabular}
  \caption{Labeling of the 4-cycle $C_4$ and notation for conjugated NAC-colorings.}
  \label{fig:quadrilateral}
\end{figure}
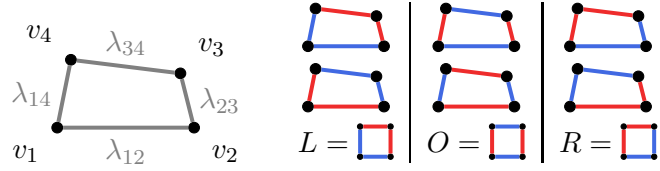

Table~\ref{tab:colQuadrilateral} classifies the motions of $C_4$ with various labelings $\lambda$ 
according to the types of active NAC-colorings.
It is important to stick to the counterclockwise numbering starting at bottom left in order to have the types well defined.
The computations in Appendix~\ref{sec:NACsQuadrilateral} show how motions determine NAC-colorings.
Since the table is complete we also know that active NAC-colorings do imply a motion.
There might be different algebraic motions compatible 
with the same $\lambda$, since
the zero set of~\eqref{eq:mainSystemOfEquations} is not necessarily irreducible.
An \emph{odd} deltoid has a degenerate component,
where two vertices $v_1$ and $v_3$ coincide, similarly \emph{even} deltoid and rhombus.
If the opposite edge lengths are equal,
then there are two non-degenerate motions: parallel and antiparallel.

\begin{table}[htb]
	\centering
	\begin{tabular}{lcCccc}
	\textbf{Quadrilateral} & \textbf{Motion}  &   \multicolumn{3}{c}{\textbf{Types of $\nacC{C_4}{\C}$}} & \textbf{Equations}	\\[8pt]
  \multirow{2}{*}{Rhombus}&parallel &\caseP	&	\colO & $O$ &  
	 		\multirow{2}{*}[0pt]{$\left\{\begin{aligned}\lambda_{12}&=\lambda_{23}=\\ \lambda_{34}&=\lambda_{14}\end{aligned}\right\}$} \\[5pt]
	 & degenerate	& &	\colL{} resp. \colR  & $L$ resp.\ $R$ & \\
	        \midrule
	Parallelogram & parallel &\caseP	&		\colO & $O$ & 
			\multirow{2}{*}{$\left\{\begin{aligned}\lambda_{12}&=\lambda_{34},\\ \lambda_{23}&=\lambda_{14}\end{aligned}\right\}$} \\[5pt]
	Antiparallelogram      &  antiparallel  &\caseA	&	\colL \quad \colR& $L, R$ & \\ \midrule
	\multirow{2}{*}{Deltoid (even)} & non-degenerate	& \caseE &		\colO  \quad \colR	& $O, R$ & 
			\multirow{2}{*}{$\left\{\begin{aligned}\lambda_{12}&=\lambda_{14}, \\ \lambda_{23}&=\lambda_{34}\end{aligned}\right\}$} \\[5pt]
	               & degenerate	& &	\colL  & $L$ &  \\ \midrule
	\multirow{2}{*}{Deltoid (odd)} & non-degenerate	&\caseO	&	\colO \quad \colL & $O, L$ & 
			\multirow{2}{*}{$\left\{\begin{aligned}\lambda_{12}&=\lambda_{23},\\ \lambda_{34}&=\lambda_{14}\end{aligned}\right\}$} \\[5pt]
	          &degenerate	& &	\colR& $R$ & \\ \midrule
	General	&	&  \caseG &  \colO \quad \colL \quad \colR  		& $O, L, R$ & otherwise
	\end{tabular}
	\vspace{0.25cm}
	\caption{Active NAC-colorings of possible motions $\C$ of a $(C_4,\lambda)$.}
	\label{tab:colQuadrilateral}
\end{table}

It is well known that the graphs $L_i$ (Figure~\ref{fig:constDistClosures}) are movable by making the vertical edges 
in the figure parallel and same lengths. 
Looking at 4-cycles shows that this is the only option.

\begin{corollary}
	Let $i\in\{1,\dots, 6\}$.
	If $\lambda$ is a proper flexible labeling of $L_i$,
	then every 4-cycle that is colored nontrivially by $\delta_{i}$
	(see Figure~\ref{fig:graphsOneNAC}), is a parallelogram.
\end{corollary}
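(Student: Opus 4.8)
The plan is to fix an algebraic motion $\C$ of $(L_i,\lambda)$ — one exists because $\lambda$ is flexible — to restrict it to each $4$-cycle coloured nontrivially by $\delta_i$, and then to read off the conclusion from Table~\ref{tab:colQuadrilateral}. The first step is to record that, as can be seen from Figure~\ref{fig:graphsOneNAC}, the only NAC-colorings of $L_i$ are $\delta_i$ and its conjugate $\conj{\delta_i}$, so $\nacC{L_i}{\C}\subseteq\{\delta_i,\conj{\delta_i}\}$. Moreover $\nacC{L_i}{\C}\neq\emptyset$: since $\C$ is a curve, not every $W_e$ is constant, so some valuation of $F(\C)$ separates the valuations of the $W_e$ in a way that \eqref{eq:activeNAC} produces a surjective — hence active NAC — coloring. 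Thus $\delta_i\in\nacC{L_i}{\C}$, up to replacing $\delta_i$ by $\conj{\delta_i}$, which is harmless because conjugating a NAC-coloring of a $4$-cycle does not change its type.

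Next I would take a $4$-cycle $H=(v_1,v_2,v_3,v_4)$ of $L_i$ on which $\delta_i$ is nontrivial and project $\C$ onto the coordinates of the vertices of $H$. This projection has positive-dimensional image: for a valuation $\nu$ witnessing that $\delta_i$ is active, $\nu$ assigns different values to a blue and a red edge of $H$, so $\nu$ is nontrivial on $F(\C')$; hence the image is an algebraic motion $\C'$ of $(H,\lambda|_{E_H})$. By Corollary~\ref{cor:muOfExtIsAlsoZero}, $\nacC{H}{\C'}=\{\delta|_{E_H}:\delta\in\nacC{L_i}{\C}\}\subseteq\{\delta_i|_{E_H},\conj{\delta_i}|_{E_H}\}$. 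Inspecting Figure~\ref{fig:graphsOneNAC} shows that $\delta_i$ restricts to an alternating coloring (opposite edges of the same colour) on every $4$-cycle it colours nontrivially, i.e.\ $\delta_i|_{E_H}$, and therefore also $\conj{\delta_i}|_{E_H}$, has type~$O$. So every active NAC-coloring of $\C'$ has type~$O$.

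Finally I would go through the rows of Table~\ref{tab:colQuadrilateral}: the only motions of a $4$-cycle whose active NAC-colorings are all of type~$O$ are the parallel motions of a parallelogram (with the rhombus as the special case $\lambda_{v_1v_2}=\lambda_{v_2v_3}=\lambda_{v_3v_4}=\lambda_{v_1v_4}$), every other entry producing an active NAC-coloring of type~$L$ or~$R$. Hence $\C'$ is such a parallel motion, so $\lambda|_{E_H}$ satisfies $\lambda_{v_1v_2}=\lambda_{v_3v_4}$ and $\lambda_{v_2v_3}=\lambda_{v_1v_4}$, i.e.\ $H$ is a parallelogram, indeed realized as one in $\C'$ and hence in $\C$.

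I expect the only real obstacles to be two bookkeeping points rather than a computation. First, making precise that $\nacC{L_i}{\C}$ is nonempty: this uses that a positive-dimensional $\C$ always admits a valuation producing a gap, together with the cited fact that any surjective coloring arising from a valuation via \eqref{eq:activeNAC} is automatically a NAC-coloring; one must also make sure that the chosen $4$-cycle really is moved by the motion, which follows from $\delta_i$ being nontrivial on it. Second, the finite case check, over $i\in\{1,\dots,6\}$, that $\delta_i$ restricts to an alternating (type~$O$) coloring on each $4$-cycle it colours nontrivially — a direct inspection of the six explicit graphs of Figure~\ref{fig:graphsOneNAC}.
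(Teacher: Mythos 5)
Your proposal is correct and follows the same route as the paper: use that $\delta_i$ is the unique NAC-coloring of $L_i$ modulo conjugation, observe from Figure~\ref{fig:graphsOneNAC} that its restriction to each nontrivially colored 4-cycle has type $O$, and conclude from Table~\ref{tab:colQuadrilateral} that the restricted motion is parallel, so the 4-cycle is a parallelogram. You merely spell out two points the paper leaves implicit (nonemptiness of $\nacC{L_i}{\C}$ and that the projection to the 4-cycle is a curve), both of which you justify correctly.
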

\begin{proof}
	Since $\delta_{i}$ is the only NAC-coloring of $L_i$ modulo conjugation,
	it is the only active one in every algebraic motion. 
	As we see in Figure~\ref{fig:graphsOneNAC} the restriction of $\delta_{i}$ to each nontrivially colored 4-cycle is of type O (\colO).
	According to Table~\ref{tab:colQuadrilateral} it must be in a parallel motion.
\end{proof}
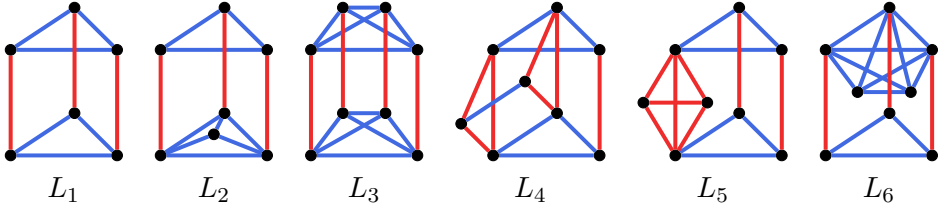
\begin{figure}[htb]
	\centering
	\begin{tabular}{cccccc}
	\begin{tikzpicture}[scale=1.4]
	    \node[vertex] (0) at (0.60, 0.40) {};
	    \node[vertex] (1) at (1, 0) {};
	    \node[vertex] (2) at (1, 1) {};
	    \node[vertex] (3) at (0.60, 1.4) {};
	    \node[vertex] (6) at (0, 1) {};
	    \node[vertex] (7) at (0, 0) {};
	    \draw[bedge] (0)edge(1) (0)edge(7) (1)edge(7) (2)edge(3) (2)edge(6) (3)edge(6)  ;
	    \draw[redge] (0)edge(3) (1)edge(2)  (6)edge(7)  ;
	\end{tikzpicture}
	&
	\begin{tikzpicture}[scale=1.4]
	    \node[vertex] (0) at (0.60, 0.40) {};
	    \node[vertex] (1) at (1, 0) {};
	    \node[vertex] (2) at (1, 1) {};
	    \node[vertex] (3) at (0.60, 1.4) {};
	    \node[vertex] (4) at (0.50, 0.2) {};
	    \node[vertex] (6) at (0, 1) {};
	    \node[vertex] (7) at (0, 0) {};
	    \draw[bedge] (0)edge(1) (0)edge(7) (1)edge(7) (2)edge(3) (2)edge(6) (3)edge(6) (4)edge(0) (4)edge(1) (4)edge(7)  ;
	    \draw[redge] (0)edge(3) (1)edge(2)  (6)edge(7)  ;
	\end{tikzpicture}
	&
	\begin{tikzpicture}[scale=1.4]
	    \node[vertex] (0) at (0.70, 0.40) {};
	    \node[vertex] (1) at (1, 1) {};
	    \node[vertex] (2) at (0.30, 1.4) {};
	    \node[vertex] (3) at (0, 1) {};
	    \node[vertex] (4) at (0, 0) {};
	    \node[vertex] (5) at (0.30, 0.40) {};
	    \node[vertex] (6) at (1, 0) {};
	    \node[vertex] (7) at (0.70, 1.4) {};
	    \draw[bedge] (0)edge(4) (0)edge(5) (0)edge(6) (1)edge(2) (1)edge(3) (1)edge(7) (2)edge(3) (2)edge(7) (3)edge(7) (4)edge(5) (4)edge(6) (5)edge(6)  ;
	    \draw[redge] (0)edge(7) (1)edge(6) (2)edge(5) (3)edge(4)  ;
	\end{tikzpicture}
	&
	\begin{tikzpicture}[scale=1.4]
	    \node[vertex] (0) at (1, 0) {};
	    \node[vertex] (1) at (-0.30, 0.30) {};
	    \node[vertex] (2) at (0.30, 0.70) {};
	    \node[vertex] (3) at (1, 1) {};
	    \node[vertex] (4) at (0.60, 1.4) {};
	    \node[vertex] (5) at (0, 1) {};
	    \node[vertex] (6) at (0.60, 0.40) {};
	    \node[vertex] (7) at (0, 0) {};
	    \draw[bedge] (0)edge(6) (0)edge(7) (3)edge(4) (3)edge(5) (4)edge(5) (6)edge(7)  ;
	    \draw[redge] (0)edge(3) (1)edge(5) (1)edge(7) (2)edge(4) (2)edge(6) (4)edge(6) (5)edge(7)  ;
	    \draw[bedge] (1)edge(2);
	\end{tikzpicture}
	&
	\begin{tikzpicture}[scale=1.4]
	    \node[vertex] (0) at (0.60, 0.40) {};
	    \node[vertex] (1) at (1, 0) {};
	    \node[vertex] (2) at (1, 1) {};
	    \node[vertex] (3) at (0.60, 1.4) {};
	    \node[vertex] (4) at (0.30, 0.50) {};
	    \node[vertex] (5) at (-0.30, 0.50) {};
	    \node[vertex] (6) at (0, 1) {};
	    \node[vertex] (7) at (0, 0) {};
	    \draw[bedge] (0)edge(1) (0)edge(7) (1)edge(7) (2)edge(3) (2)edge(6) (3)edge(6)  ;
	    \draw[redge] (0)edge(3) (1)edge(2) (4)edge(5) (4)edge(6) (4)edge(7) (5)edge(6) (5)edge(7) (6)edge(7)  ;
	\end{tikzpicture}
	&
	\begin{tikzpicture}[scale=1.4]
	    \node[vertex] (0) at (0.60, 0.40) {};
	    \node[vertex] (1) at (0, 0) {};
	    \node[vertex] (2) at (1, 0) {};
	    \node[vertex] (3) at (0.80, 0.60) {};
	    \node[vertex] (4) at (0.30, 0.60) {};
	    \node[vertex] (5) at (1, 1) {};
	    \node[vertex] (6) at (0, 1) {};
	    \node[vertex] (7) at (0.60, 1.4) {};
	    \draw[bedge] (0)edge(1) (0)edge(2) (1)edge(2) (3)edge(4) (3)edge(5) (3)edge(6) (3)edge(7) (4)edge(5) (4)edge(6) (4)edge(7) (5)edge(6) (5)edge(7) (6)edge(7)  ;
	    \draw[redge] (0)edge(7) (1)edge(6) (2)edge(5)  ;
	\end{tikzpicture}
	\\
	$L_1$ & $L_2$ &$L_3$ &$L_4$ &$L_5$ &$L_6$ 	
	\end{tabular}
	\caption{The only NAC-colorings $\delta_1, \dots, \delta_6$ of $L_1, \dots, L_6$ modulo conjugation.}
	\label{fig:graphsOneNAC}
\end{figure}

We compute the $\mu$-numbers for active NAC-colorings of a 4-cycle
in order to be able to use formula \eqref{eq:ramificationFormula} to construct a system of equations
based on restrictions to 4-cycles.  
\begin{theorem}	\label{thm:muQlatsIsOne}
	Let $\C$ be an algebraic motion of $C_4$.
	If $\delta\in \nacC{C_4}{\C}$, then $\mu(\delta,\C)=1$.
\end{theorem}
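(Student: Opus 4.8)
The plan is to compute the $\mu$-number directly from the structure of an algebraic motion of the $4$-cycle, using the explicit parametrizations recalled earlier and the fact (Table~\ref{tab:colQuadrilateral}, Appendix~\ref{sec:NACsQuadrilateral}) that the possible motions of $C_4$ are completely classified. First I would observe that, by Remark~\ref{rem:mu_active}, $\delta\in\nacC{C_4}{\C}$ already forces $\mu(\delta,\C)\geq 1$, since an active NAC-coloring has $\gap(\delta,\nu)\geq 1$ for at least one valuation. So the content is the upper bound $\mu(\delta,\C)\leq 1$, i.e.\ that there is a \emph{unique} valuation $\nu$ with $\gap(\delta,\nu)>0$ and that for this $\nu$ the gap equals exactly $1$. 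Equivalently, I need: whenever $\nu(W_e)$ is not constant on $E_{C_4}$ with respect to the $\{\blue,\red\}$ split given by $\delta$, the blue and red values differ by exactly $1$, and this happens for only one place of $F(\C)$.

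The key step is to pin down $F(\C)$ concretely. A motion of $C_4$ compatible with $\lambda$ is a rational curve (a conic in suitable coordinates — e.g.\ parametrizing the position of $v_3$ once $v_1,v_2$ are fixed and $v_4$ is determined up to the pencil), so $F(\C)\cong\CC(t)$ and its places are the points of $\mathbb{P}^1$. For each of the five motion types ($\caseP,\caseA,\caseE,\caseO,\caseG$) one has an explicit rational parametrization of $W_{v_i,v_{i+1}}$ and $Z_{v_i,v_{i+1}}$; from the product relations $W_{u,v}Z_{u,v}=\lambda_{uv}^2$ and the cycle relations $\sum W_{v_i,v_{i+1}}=\sum Z_{v_i,v_{i+1}}=0$ one reads off, place by place, the orders $\nu(W_e)$. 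I would then simply check that for each type and each active coloring $\delta$ of the corresponding type ($O$, $L$, or $R$), exactly one place $t_0\in\mathbb{P}^1$ has $\nu_{t_0}(W_e)$ non-constant across colors, and there $\min_{\delta(e)=\red}\nu_{t_0}(W_e)-\max_{\delta(e)=\blue}\nu_{t_0}(W_e)=1$; at every other place all four $\nu(W_e)$ agree (they are all $0$), so the gap vanishes. Summing over places gives $\mu(\delta,\C)=1$.

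An alternative, more uniform route — which I would actually prefer to write up — is to use the ramification formula, Theorem~\ref{thm:ramificationFormula}, with $G'$ a single edge $\bar u\bar v$ of $C_4$ and $G=C_4$: but a single edge has no NAC-coloring, so instead take $G'=C_4$ and compare $\C$ with the trivial motion, which does not quite work either; the cleanest variant is to note that $\deg$ of the projection of $\C$ onto the edge $v_1v_2$ together with the fact that $W_{v_1,v_2}=Z_{v_1,v_2}=\lambda_{v_1v_2}$ is constant already forces the total gap to be small. Concretely, for a type-$O$ coloring the only cycle is $C_4$ itself, and the leading-coefficient cycle equations~\eqref{eq:lcWZcycle}, read as $w_{v_1,v_2}+w_{v_3,v_4}=0$ and $z_{v_2,v_3}+z_{v_1,v_4}=0$ (when the blue resp.\ red orders are tied), combined with the six quantities $w_e,z_e$ constrained by $w_ez_e=\lambda_e^2$, leave a one-dimensional solution set, matching a single simple gap; the same bookkeeping handles $L$ and $R$ via the antiparallelogram/deltoid parametrizations. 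The main obstacle I anticipate is purely organizational rather than deep: one must be careful that the classification in Table~\ref{tab:colQuadrilateral} is being used in the right direction (motion $\Rightarrow$ active NAC-colorings, which is what Appendix~\ref{sec:NACsQuadrilateral} establishes), and that every motion type and every coloring type is covered, including the degenerate components of deltoids and rhombi where two vertices coincide — there the relevant $W_e$ or $Z_e$ may vanish identically and one must check the place count does not increase. Once the parametrizations are tabulated, each case is a one-line order computation.
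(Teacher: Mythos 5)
Your main route rests on a false premise: the algebraic motion of a \emph{general} quadrilateral is not a rational curve. Eliminating as in Appendix~\ref{sec:NACsQuadrilateral} one arrives at equation~\eqref{eq:quadrilateral}, a curve of bidegree $(2,2)$ in $(s,W_{3,4})$, which for general edge lengths is smooth of genus~$1$; this is the classical fact that the configuration space of a generic four-bar linkage is an elliptic curve. So $F(\C)\not\cong\CC(t)$, the places of $F(\C)$ are not the points of $\mathbb{P}^1$, and your plan of reading off $\nu(W_e)$ ``place by place'' on $\mathbb{P}^1$ does not get off the ground. What \emph{is} rational is the projection of $\C$ to the $3$-vertex path obtained by deleting $v_3$ or $v_4$ (its function field is $\CC(W_{2,3})$, since $x_3,y_3\in\CC(W_{2,3})$); the motion curve $\C$ is a cover of degree $1$ or $2$ of that rational curve. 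This is precisely the structure the paper's proof exploits, and it is the step your sketch never reaches: in your ``alternative route'' you correctly discard $G'=$ a single edge (no NAC-coloring) but then drift to a vague degree statement instead of taking $G'$ to be the two-edge path. The paper computes $\mu(\colOL,\C_g)=1$ on that rational curve (only two valuations of $\CC(W_{2,3})$ have $\nu(W_{2,3})\neq 0$, with orders $\pm 1$), feeds this into Theorem~\ref{thm:ramificationFormula} to get $\mu_O+\mu_R=\deg(f)$ and $\mu_O+\mu_L=\deg(g)$ with $\deg(f),\deg(g)\in\{1,2\}$, and then determines the degrees case by case from Table~\ref{tab:colQuadrilateral}.

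Even granting the rationality claim in the degenerate cases where it holds, your direct route leaves the actual content unproved. The Appendix only expands $W_{3,4}$ at the branches over $s=0$; to conclude $\mu(\delta,\C)\le 1$ you would have to verify that no place over $s=\infty$, over the branch locus of the double cover, or over zeros/poles of the other $W_e$ contributes a positive gap, and that the single contributing place has gap exactly $1$ rather than some larger ramified value. You flag this (``one must check the place count does not increase'') but offer no mechanism for it; the ramification formula applied to the two vertex-deletion projections is exactly that mechanism, because it caps the total $\mu$-mass lying over each NAC-coloring of the path by $\deg(f)\le 2$ and then splits it between the two active colorings of $\C$. I would redo the proof along those lines.
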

\begin{proof}
	There are three possible NAC-colorings of $C_4 = (v_1,v_2,v_3,v_4)$ modulo conjugation.
	W.l.o.g., we assume that the edge $v_1v_2$ is fixed and $\delta(v_1v_2)=\blue{}$.	
	Let $\delta_O, \delta_L$ and~$\delta_R$ be the three possibilities for $\delta$ 
	with types $\colO, \colL$ and $\colR$ respectively (see Figure~\ref{fig:quadrilateral}).
	We set $\mu_O:=\mu(\delta_O,\C), \mu_L:=\mu(\delta_L,\C)$ and $\mu_R:=\mu(\delta_R,\C)$.
	We want to show that if $\mu_O, \mu_L$ or $\mu_R$ is nonzero,
	i.e., the corresponding NAC-coloring is active for $\C$, then it equals 1.

	Let $f\colon \C \rightarrow \C_{f}$, resp.\ $g\colon \C \rightarrow \C_{g}$,
	be the projection of $\C$ into realizations of the subgraph of $C_4$
	induced by removing vertex $v_3$, resp.~$v_4$.
	For these two subgraphs,
	we set $\mu_{f}:=\mu(\colOR,\C_{f})$ and $\mu_{g}:=\mu(\colOL,\C_{g})$.
	Theorem~\ref{thm:ramificationFormula} gives
	\begin{equation} \label{eq:ramFromForQlats}
		\mu_O+\mu_R=\mu_{f}\cdot \deg(f), \qquad \mu_O+\mu_L=\mu_{g}\cdot \deg(g)
	\end{equation}	

	We assume that $v_1$ and $v_3$ do not coincide in $\C_g$, i.e., $\C_g$ is a curve.
	Since $v_1v_2$ is fixed, $\rho(v_1)=(0, 0)$ and~$\rho(v_2)=(\lambda(v_1v_2), 0)$.
	By the definition of $W_{2,3}$ and $Z_{2,3}$, we have $2(x_3-\lambda(v_1v_2))=W_{2,3}+Z_{2,3}=W_{2,3}+\lambda(v_2v_3)^2/W_{2,3}$.
	Hence, $x_3 \in \CC(W_{2,3})$. Similarly, $y_3 \in \CC(W_{2,3})$.
	Thus, the rational function field $\CC(W_{2,3})$ is the function field of~$\C_{g}$.
	But there are only two valuations $\nu_1,\nu_2$ of $\CC(W_{2,3})$ trivial on $\CC$ such that $\nu_i(W_{2,3})\neq 0$
	(compare~\cite[Lecture~3]{Deuring}).
	Those valuations yield $\nu_1(W_{2,3})=1$ and $\nu_2(W_{2,3})=-1$.
	Since $\nu_i(W_{1,2})=\nu_i(\lambda(v_1v_2))=0$,
	we have $\mu_{g}=\gap(\colOL,\nu_1)+\gap(\colOL,\nu_2)= 1+0=1$.
	Analogously, if $v_2$ and $v_4$ do not coincide in $\C_f$,  
	$\mu_{f}=1$.  
	Hence, \eqref{eq:ramFromForQlats} simplifies to 
	\begin{align*}
		\mu_O+\mu_R= \deg(f)\,, \qquad
		\mu_O+\mu_L= \deg(g)\,.
	\end{align*}

	The degree of $g$
	is the number of possibilities
	we have for extending a realization of the subgraph to a realization of $C_4$.
	The vertex $v_4$ in $\C$ lies in the intersection of two circles
	centered at vertices~$v_1$ and $v_3$,
	but one point of the intersection might not be in~$\C$.
	Hence, $\deg(g), \deg(f)\in\{1,2\}$. 
	We treat all non-degenerate motions separately. 
	\begin{itemize}
	  \item If the motion $\C$ is antiparallel, then $\mu_L>0, \mu_R>0$ and $\mu_O=0$.
	  				Only one of the two possible positions of $v_4$, resp.\ $v_3$, yields
	  				an antiparallelogram, since the other gives a parallelogram.
					Therefore, we have $\deg(f)=\deg(g)=1$,
					and hence $\mu_R=\mu_L=1$.
		\item If the motion $\C$ is parallel,
					then $\mu_L=0, \mu_R=0$ and $\mu_O>0$.
					Since one of the intersection points lies on the antiparallel component, we have $\deg(f)=\deg(g)=1$ and thus $\mu_O=1$.
		\item Similarly for a rhombus as one of the intersection points lies on a degenerate component.
	  \item If $\C$ is general, then $\deg(f)=\deg(g)=2$
					since the coordinates of the missing vertex both lie on the curve $\C$.
					Since all possible NAC-colorings are active, we know that
					$\mu_O, \mu_L$ and $\mu_R$ are non-zero. Hence, $\mu_O=\mu_L=\mu_R=1$.
		\item If $\C$ is an odd deltoid, then $\mu_L>0, \mu_O>0$ and $\mu_R=0$
					(see Table~\ref{tab:colQuadrilateral}).
					Since the degree of $g$ is two, $\mu_O=\mu_L=1$.
					The equations imply $\deg(f)=1$ which is indeed the case,
					since one point of the intersection does not lie on $\C$,
					but on the degenerate component.
					Similarly, $\mu_O=\mu_R=1$ for an even deltoid.		
	\end{itemize}
	
	If $\C$ is the degenerate component of an odd deltoid,
	then $\mu_L=0$, $\mu_O=0$ and $\mu_R>0$.
	From the first equation in \eqref{eq:ramFromForQlats}, 
	$\mu_R=\mu_{f}\cdot \deg(f) = \deg(f)$, which
	is one since $v_3$ coincides with $v_1$.
	Therefore, $\mu_R=1$. 
	Other degenerate motions follow analogously.
\end{proof}

The main step of our classification approach is to find possible sets of active NAC-colorings of an algebraic motion
using the condition on $\mu$-numbers from Theorem~\ref{thm:ramificationFormula}
for restrictions to 4-cycles.
We introduce a consistency condition without assuming an algebraic motion. 

\begin{definition}	\label{def:consistentWithFourCycles}
	Let $H_1,\dots, H_k$ be some 4-cycles of a graph $G$ such that for each $H_i$
	there exists $\delta\in\nac{G}$ with $\delta(E_{H_i}) = \{\red{}, \blue{}\}$.
	A subset~$N\subset\nac{G}$
	is called \emph{consistent with the 4-cycles} $H_1,\dots, H_k$
	if there exists a vector of nonnegative integers $(\mu_\delta)_{\delta\in\nac{G}}$ such that:
	\begin{enumerate}
		\item\label{enumi:zeroMu} $\mu_\delta\neq 0$  if and only if $\delta\in N$, and
		\item\label{enumi:motionTypeEqForMu} for each 4-cycle $H_i$, there exists a positive integer $d_i$ such that
			\begin{equation*}
				\sum_{\mathclap{\substack{\delta\in\nac{G} \\ \delta|_{E_{H_i}}=\, \delta'}}} \mu_\delta = d_i
			\end{equation*}
			for all $\delta' \in \{\delta|_{E_{H_i}}\in \nac{H_i} \colon \delta \in N\}$.
	\end{enumerate}
	We omit the specification of the 4-cycles if we consider all 4-cycles satisfying the assumption.
\end{definition}

The following corollary justifies that the sets of NAC-colorings consistent with 4-cycles
form a superset of sets of active NAC-colorings of algebraic motions.

\begin{corollary}	\label{cor:mustBeConsistent}
	Let $\C$ be an algebraic motion of a graph $G$.
	Let $H_	1,\dots, H_k$ be some 4-cycles of $G$.
	Let $\C_1,\dots,\C_k$ be the images of $\C$ by the projections
	to the realizations of $H_1,\dots, H_k$.
	If $\C_1,\dots,\C_k$ are curves,
	then the set of active NAC-colorings $\nacC{G}{\C}$ is consistent with the 4-cycles $H_1,\dots, H_k$,
\end{corollary}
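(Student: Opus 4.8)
The plan is to verify the two conditions of Definition~\ref{def:consistentWithFourCycles} directly, using the $\mu$-numbers $\mu(\delta,\C)$ as the required vector of nonnegative integers. Concretely, for each $\delta\in\nac{G}$ set $\mu_\delta := \mu(\delta,\C)$; this is a nonnegative integer by the discussion following the definition of the $\mu$-number. Condition~\eqref{enumi:zeroMu}, namely that $\mu_\delta\neq 0$ exactly when $\delta\in\nacC{G}{\C}$, is then immediate from Remark~\ref{rem:mu_active}.

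For condition~\eqref{enumi:motionTypeEqForMu}, fix a 4-cycle $H_i$ and let $f_i\colon\C\to\C_i$ be the projection to the realizations of $H_i$; by hypothesis $\C_i$ is a curve, hence an algebraic motion of $H_i$. For any $\delta'\in\nac{H_i}$, Theorem~\ref{thm:ramificationFormula} applied to the subgraph $H_i$ gives
\begin{equation*}
	\sum_{\substack{\delta\in\nac{G}\\ \delta|_{E_{H_i}}=\delta'}} \mu(\delta,\C) = \mu(\delta',\C_i)\cdot\deg(f_i)\,.
\end{equation*}
So I would set $d_i := \deg(f_i)$, which is a positive integer. It remains to check that for every $\delta'$ of the form $\delta|_{E_{H_i}}$ with $\delta\in N=\nacC{G}{\C}$, the right-hand side equals exactly $d_i$, i.e.\ $\mu(\delta',\C_i)=1$. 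By Corollary~\ref{cor:muOfExtIsAlsoZero}, such a $\delta'$ is precisely an active NAC-coloring of $\C_i$, i.e.\ $\delta'\in\nacC{H_i}{\C_i}$; and since $\C_i$ is an algebraic motion of the 4-cycle $H_i$, Theorem~\ref{thm:muQlatsIsOne} tells us $\mu(\delta',\C_i)=1$. Hence the displayed sum equals $\deg(f_i)=d_i$, independent of $\delta'$, as required. One should also note that the hypothesis of Definition~\ref{def:consistentWithFourCycles} that each $H_i$ is nontrivially colored by some NAC-coloring of $G$ is satisfied because, $\C_i$ being a curve, it has at least one active NAC-coloring, which lifts to one of $G$ by Corollary~\ref{cor:muOfExtIsAlsoZero}.

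The argument is essentially a bookkeeping exercise once the heavy machinery is in place, so there is no single hard obstacle; the main point requiring care is making sure that the indexing set $\{\delta|_{E_{H_i}}\in\nac{H_i}\colon \delta\in N\}$ in the definition is matched correctly with $\nacC{H_i}{\C_i}$ via Corollary~\ref{cor:muOfExtIsAlsoZero}, so that Theorem~\ref{thm:muQlatsIsOne} applies to exactly the colorings $\delta'$ over which condition~\eqref{enumi:motionTypeEqForMu} quantifies. A minor subtlety is that $\C_i$ could a priori be reducible or the projection degenerate, but the statement explicitly assumes $\C_i$ is a curve and $f_i$ is a (dominant, hence finite) morphism of curves, so $\deg(f_i)$ is a well-defined positive integer and formula~\eqref{eq:ramForm} underlying Theorem~\ref{thm:ramificationFormula} applies.
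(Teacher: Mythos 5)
Your proof is correct and follows essentially the same route as the paper: take $\mu_\delta:=\mu(\delta,\C)$, get condition~(i) from Remark~\ref{rem:mu_active}, and get condition~(ii) by combining Theorem~\ref{thm:ramificationFormula}, Corollary~\ref{cor:muOfExtIsAlsoZero} and Theorem~\ref{thm:muQlatsIsOne} to show the sum equals $\deg(f_i)$ for every relevant $\delta'$. The extra check that each $H_i$ satisfies the hypothesis of Definition~\ref{def:consistentWithFourCycles} is a nice touch the paper leaves implicit.
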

\begin{proof}
	We set $\mu_\delta:=\mu(\delta,\C)$. 
	The condition~\ref{enumi:zeroMu} of Definition~\ref{def:consistentWithFourCycles} follows
	from Remark~\ref{rem:mu_active}.
	Regarding~\ref{enumi:motionTypeEqForMu}: for $i\in\{1,\dots,k\}$,
	if $\delta'\in\{\delta|_{E_{H_i}}\in\nac{H_i}\colon \delta \in \nacC{G}{\C}\}\myeq{\ref{cor:muOfExtIsAlsoZero}}\nacC{H_i}{\C_i}$,
	then
	\begin{equation*}
		\sum_{\mathclap{\substack{\delta\in\nac{G} \\ \delta|_{E_{H_i}}=\,\delta'}}} \mu_\delta \myeq{\ref{thm:ramificationFormula}}\mu(\delta',\C_i)\cdot\deg(f_i) \myeq{\ref{thm:muQlatsIsOne}} \deg(f_i)>0\,.\qedhere
	\end{equation*}
\end{proof}

\section{Consistent motion types}
\label{sec:consistency}
Corollary~\ref{cor:mustBeConsistent} justifies that
if there is an algebraic motion of $(G, \lambda)$,
then the set of active NAC-colorings is consistent with 4-cycles.
In order to classify proper flexible labelings of a graph $G$,
we want to determine all possible candidate sets of active NAC-colorings.
A possible approach would be to check consistency with all 4-cycles for each subset of~$\nac{G}$.

We use a slightly modified approach, which seems to be more efficient:
instead of assuming a set of NAC-colorings,
we make an assumption on the motions of quadrilaterals.
The motions determine the active NAC-colorings of the 4-cycles.
From these we construct the system of equations according to Definition~\ref{def:consistentWithFourCycles} and check possible solutions.
The nonzero entries of a solution give us a set of active
NAC-colorings consistent with 4-cycles. 
We also say ``motion types consistent with 4-cycles'',
since a motion type defines the equation \ref{enumi:motionTypeEqForMu} in Definition~\ref{def:consistentWithFourCycles}.
If a combination of motion types is not consistent with a subset of 4-cycles,
then every extension of this combination to all 4-cycles can be skipped.
Hence, we iteratively extend the assumption on motion types and check the consistency. 
We describe the procedure in detail now.

\consistencycheck{} (Algorithm~\ref{alg:checkConsistent}): 
We check consistency for given types of motion of 4-cycles.
Let $H_1,\dots, H_k$ be 4-cycles of $G$ such that
there exists a $\delta\in\nac{G}$ with $\delta(E_i) = \{\red{}, \blue{}\}$.
Since we are interested only in proper flexible labelings,
the types of active NAC-colorings $m_i$ of a 4-cycle $H_i$ must correspond to
a non-degenerate motion (compare with Table~\ref{tab:colQuadrilateral}):
\begin{itemize}
  \item $\caseP=\left\{\colO\right\}=\{O\}$ (parallelogram/rhombus),
  \item $\caseA=\left\{\colL, \colR\right\}=\{L, R\}$ (antiparallelogram),
  \item $\caseO=\left\{\colO, \colL\right\}=\{O, L\}$ (odd deltoid),
  \item $\caseE=\left\{\colO, \colR\right\}=\{O, R\}$ (even deltoid), or
  \item $\caseG=\left\{\colO, \colL, \colR\right\}=\{O, L, R\}$ (general).
\end{itemize}
We use the notation $\caseP, \caseA, \caseO, \caseE, \caseG$ both for sets of types of active NAC-colorings and motion types.
Having an ansatz $(m_1,\dots, m_k) \in \{\caseP, \caseA, \caseO, \caseE, \caseG\}^k$ 
of motions types of the 4-cycles,
we construct the set of equations in variables $\mu_\delta$ for all $\delta\in\nac{G}$.
According to Corollary~\ref{cor:muOfExtIsAlsoZero},
$\mu_\delta=0$ for all $\delta\in\nac{G}$ such that the type of $\delta|_{E_{H_i}}$
is not in $m_i$ for some~$H_i$.
For each~$H_i$, the equations from Definition~\ref{def:consistentWithFourCycles}\ref{enumi:motionTypeEqForMu}
are added for $\delta$ whose restriction has a type in $m_i$.
Since we do not know the $d_i$, we consider the equations with $d_i$ eliminated.
In particular, there is no equation added if $m_i=\caseP$.
If the obtained system of equations allows a nontrivial nonnegative solution,
then $(m_1,\dots, m_k)$ is consistent with 4-cycles.
Example~\ref{ex:K33} shows the system of equations for a specific assumption of motion types
of 4-cycles of $\Ktt$. 

We remark that if $(m_1,\dots, m_k)$ are motion types of an algebraic motion
with a flexible labeling $\lambda$, then all $m_i\in \{\caseP, \caseA, \caseE, \caseO\}$
enforce some edge lengths to be equal, e.g.\ if $m_i\in\{\caseA, \caseP\}$, 
then the opposite edges of $H_i$ have the same lengths.

\begin{algorithm}[ht]
	\caption{\textsc{CheckConsistency}}
	\label{alg:checkConsistent}
	\begin{algorithmic}[1]
		\Require Motion types $(m_1,\dots, m_k) \in \{\caseG, \caseP, \caseA, \caseE, \caseO\}^k$
			for 4-cycle subgraphs $H_1,\dots, H_k$
		\Ensure Consistency of the motion types with the 4-cycles
		\State $R:=\emptyset$
		\For{$\delta\in \nac{G}$}
			\If{$\exists i \in \{1,\dots,k\}: \text{type of } \delta|_{E_{H_i}} \notin m_i$}
				\State Add the equation $\mu_\delta=0$ into $R$.
			\EndIf
		\EndFor
		\For{$i \in \{1,\dots,k\}$}
			\State Add the equations
			\begin{equation*}
				\sum_{\mathclap{\substack{\delta\in\nac{G} \\ \text{type of }\delta|_{E_{i}}=\,t_1}}} \mu_\delta = \dots = \sum_{\mathclap{\substack{\delta\in\nac{G} \\ \text{type of }\delta|_{E_{i}}=\,t_{s_i}}}} \mu_\delta
			\end{equation*}
			\Statex\hspace{\algorithmicindent}into $R$, where $\{t_1, \dots, t_{s_i}\}=m_i$. \label{alg:line:sums}
		\EndFor
		\State\Return Boolean value whether $R$ has a nonzero solution with nonnegative entries 

	\end{algorithmic}
\end{algorithm}

\consistenttypes{} (Algorithm~\ref{alg:findConsistent}): In order to find all combinations of consistent motion types,
let $H_1, \dots, H_\ell$ be all nontrivially colored 4-cycles of $G$.
In the $i$-th iteration, we extend all consistent motion types for $H_1, \dots, H_{i-1}$ by
all $m_i \in \{\caseG, \caseP, \caseA, \caseE, \caseO\}$
and do \consistencycheck.

\begin{remark}	
	\label{rem:motion_types_necessary_cond}
	Moreover, we check also the following necessary conditions:
	\begin{enumerate}
	  	\item\label{enumi:general} Let $\lambda$ be edge lengths enforced by $(m_1,\dots, m_k)$.
		  	The edge lengths of the 4-cycles with the motion type $\caseG$ must be general,
		  	i.e., pairs of edges cannot have the same length.
	  	\item\label{enumi:K23} For $\lambda$ as in \ref{enumi:general}, the subgraphs isomorphic to $K_{2,3}$
	  		cannot have all edge lengths to be the same. Otherwise, two vertices have to coincide.
	  	\item\label{enumi:forbiddenMotionTypes} If there are two 4-cycles $(v_1, v_2, v_3, v_4)$ and $(v_1, v_2, v_5, v_4)$,
	  		then they are not allowed to have motion types $\caseA\caseA$, $\caseP\caseP$,
	  		$\caseE\caseO$ or $\caseA\caseE$. The first two cases would coincide vertices,
	  		the third one would imply that one of the deltoids is actually a rhombus,
	  		and the last one would force
	  		the antiparallelogram to have all lengths equal. 
	\end{enumerate}
\end{remark}
A possible heuristic to reduce
the number of cases is to sort the 4-cycles so that $H_{i+1}$
shares some edges with $H_i$, if possible.

Having the set of possible motion types from \consistenttypes,
we consider only non-isomorphic cases.
Motion types  $(m_1,\dots, m_\ell)$ and $(m'_1,\dots, m'_\ell)$
are isomorphic  if there exists an element of the automorphisms group of $G$
such that the sets of edges with the same lengths enforced by $(m_1,\dots, m_\ell)$
are the images of the sets of edges with the same lengths enforced by $(m'_1,\dots, m'_\ell)$.
In other words, motion types of 4-cycles are preserved when the automorphism is applied,
taking into account that $\caseO$ and $\caseE$ sometimes interchange.

For a representative of each isomorphism class,
we consider the system of equations from \consistencycheck. 
For each nonnegative solution of the system,
a set of possible active NAC-colorings consistent with 4-cycles is given 
by Definition~\ref{def:consistentWithFourCycles}\ref{enumi:zeroMu}.
Clearly, the solutions with the same nonzero entries give the same 
sets of active NAC-colorings. 

\begin{algorithm}[htb]
	\caption{\textsc{ConsistentTypes}}
	\label{alg:findConsistent}
	\begin{algorithmic}[1]
		\Require All 4-cycle subgraphs $H_1,\dots, H_\ell$
		\Ensure Set $S$ of possible motion types of 4-cycles.
		\State $S_{\text{prev}}:=\emptyset$
		\For{$i \in \{1, \dots \ell\}$}
			\State $S:=\emptyset$
			\For{$(m_1, \dots, m_{i-1}) \in S_{\text{prev}}$}
				\For{$m_i \in \{\caseG, \caseP, \caseA, \caseE, \caseO \} $}
					\If{\textsc{CheckConsistency}($(m_1, \dots, m_{i})$,$H_1, \dots, H_i$) 
					  \textbf{and} \ref{rem:motion_types_necessary_cond}\ref{enumi:general}--\ref{enumi:forbiddenMotionTypes}}
						\State Add $(m_1, \dots, m_{i})$ to $S$
					\EndIf
				\EndFor
			\State $S_{\text{prev}}:=S$
			\EndFor
		\EndFor
		\State \Return $S$
	\end{algorithmic}
\end{algorithm}

\begin{remark}
For each set of active NAC-colorings for motion types $M=(m_1,\dots, m_\ell)$,
we determine all 4-cycles with orthogonal diagonals due to being deltoids,
an active NAC-coloring 
satisfying the assumption of  Proposition~\ref{prop:orthogonalDiagonals}, or a consequence of the previous two.
	\label{rem:orthogonal_diags}
	Assume now that $H_i$ has orthogonal diagonals. The following cases are checked:
	\begin{enumerate}
	  \item If $m_i=\caseA$, then $M$ is invalid, since antiparallelograms cannot have orthogonal diagonals.
	  \item If $m_i=\caseP$, then all edges of $H_i$ have the same lengths. 
	  	The conditions~\ref{enumi:general} and~\ref{enumi:K23} in Remark~\ref{rem:motion_types_necessary_cond}
	  	have to be verified for edge lengths enforced by $M$ and this extra condition.
	  \item If $m_i=\caseG$ and two incident edges of $H_i$ are enforced by $M$ to have the same lengths,
	  	then also the lengths of the other two edges are equal, which contradicts that $H_i$ is general. 
	\end{enumerate}
\end{remark}
The necessary conditions in Remarks~\ref{rem:motion_types_necessary_cond} and~\ref{rem:orthogonal_diags}
are sufficient for $K_{3,3}$, see Example~\ref{ex:K33}, but not in general --- 
there are motion types of graph $Q_1$ satisfying the conditions but yielding coinciding vertices,
see \ref{Q1:g} in Table~\ref{tab:Q1_consistent_motion_types} in Section~\ref{sec:Q1classification}.

Here is a summary on how to get pairs of motion types and a set of active NAC-colorings
that are possible for non-degenerate motions of the graph, modulo graph automorphism:
\begin{enumerate}
  \item Find all motion types consistent with 4-cycles using \consistenttypes\ and Remark~\ref{rem:motion_types_necessary_cond}.
  \item Identify isomorphism  class of the consistent motion types.
  \item For a representative of each class, determine all possible sets of active NAC-colorings.
  \item Check the conditions of Remark~\ref{rem:orthogonal_diags} related to orthogonality of diagonals.
\end{enumerate}
We provide our implementation of the method in \cite{flexrilog}.

\begin{example}
\label{ex:K33}
We illustrate our method on the well know example of $\Ktt$ by describing all proper flexible labelings.
Dixon \cite{Dixon} showed two ways to construct such a labeling, see below.
Walter and Husty \cite{WalterHusty} proved that these are the only two cases.
We prove the same using our method.
Let $V_{\Ktt}=\{1,\dots,6\}$ and 
$E_{\Ktt}=\{ij \colon i\in\{1,3,5\}, j\in\{2,4,6\}\}$, see Figure~\ref{fig:K33}.
A proper flexible labeling $\lambda$ of $\Ktt$ with an algebraic motion $\C$ is called
\begin{description}
	\item[{Dixon~I type}] if for every realization $\rho\in\C$,
		the points $\rho(1)$, $\rho(3)$ and $\rho(5)$, resp.\ $\rho(2)$, $\rho(4)$ and $\rho(6)$
		are collinear and these two lines are orthogonal, or
	\item[{Dixon~II type}] if for every realization $\rho\in\C$,
		the points $\rho(1)$, $\rho(3)$ and $\rho(5)$,	resp.\ $\rho(2)$, $\rho(4)$ and $\rho(6)$,
		lie in vertices of a rectangle $R_1$, resp.\ $R_2$,
		such that the intersection of diagonals of $R_1$ and $R_2$
		is the same and the sides of $R_1$ are parallel or perpendicular to the sides of $R_2$
		(see Figure~\ref{fig:DixonII}).
\end{description}

There are only two NAC-colorings of $\Ktt$ up to symmetries and conjugation,
see Figure~\ref{fig:K33}.
We denote them by $\omega_k$ for $k\in V_{\Ktt}$ and $\epsilon_{kl}$ for $kl\in E_\Ktt$,
where for an edge $ij\in E_{\Ktt}$:
\begin{align*}
	\omega_k(ij)=\blue{} &\iff k \in\{i,j\}\,, \\
	\epsilon_{kl}(ij)=\blue{} &\iff \{k,l\}=\{i,j\} \lor  \{k,l\}\cap\{i,j\}=\emptyset\,.
\end{align*}

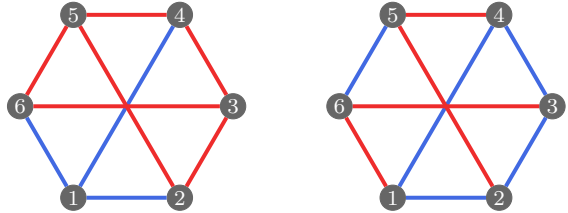
\begin{figure}[htb]
	\centering	
		\begin{tikzpicture}[scale=1.4]
			\node[lnode] (a1) at (-0.5, -0.866025) {1};
			\node[lnode] (a2) at (0.5, -0.866025) {2};
			\node[lnode] (a3) at (1., 0.) {3};
			\node[lnode] (a4) at (0.5, 0.866025) {4};
			\node[lnode] (a5) at (-0.5, 0.866025) {5};
			\node[lnode] (a6) at (-1.,  0.) {6};
			\draw[bedge] (a1)edge(a2) (a1)edge(a4) (a1)edge(a6);
			\draw[redge] (a2)edge(a3) (a2)edge(a5) (a3)edge(a4) (a3)edge(a6) (a5)edge(a4) (a5)edge(a6);
		
			\begin{scope}[xshift=3cm]
			\node[lnode] (a1) at (-0.5, -0.866025) {1};
			\node[lnode] (a2) at (0.5, -0.866025) {2};
			\node[lnode] (a3) at (1., 0.) {3};
			\node[lnode] (a4) at (0.5, 0.866025) {4};
			\node[lnode] (a5) at (-0.5, 0.866025) {5};
			\node[lnode] (a6) at (-1.,  0.) {6};
				\draw[bedge] (a1)edge(a2) (a1)edge(a4) (a2)edge(a3) (a3)edge(a4) (a5)edge(a6);
				\draw[redge] (a1)edge(a6) (a2)edge(a5) (a3)edge(a6) (a5)edge(a4) ;
			\end{scope}
		\end{tikzpicture}
	\caption{$\Ktt$ with NAC-colorings $\omega_1$ and $\epsilon_{56}$.}
	\label{fig:K33}
\end{figure}

Let $H_1, \dots, H_9$ be the 4-cycles $(1, 2, 3, 4)$, $(1, 2, 3, 6)$, $(1, 4, 3, 6)$, 
$(1, 2, 5, 6)$, $(2, 3, 6, 5)$, $(2, 3, 4, 5)$, $(3, 4, 5, 6)$, $(1, 4, 5, 6)$, $(1, 2, 5, 4)$ respectively.
To illustrate \consistencycheck, 
we assume motion types of some 4-cycles and construct the equations:
\begin{align*}
	H_1 &: \caseE &
	\mu_{\omega_{3}} =
	\mu_{\omega_{1}} =
	\mu_{\epsilon_{36}} &=
	\mu_{\epsilon_{16}} = 0\,, \\ &&
	\textcolor{colGray}{\mu_{\epsilon_{34}}} + \textcolor{colGray}{\mu_{\epsilon_{14}}} + \textcolor{colGray}{\mu_{\epsilon_{23}}} + \mu_{\epsilon_{12}} &=
	\textcolor{colGray}{\mu_{\omega_{4}}} + \mu_{\epsilon_{45}} + \textcolor{colGray}{\mu_{\omega_{2}}} + \textcolor{colGray}{\mu_{\epsilon_{25}}}\,,
	\\
	H_2 &: \caseP &
	\mu_{\omega_{3}} =
	\mu_{\omega_{1}} =
	\mu_{\epsilon_{34}} &=
	\mu_{\epsilon_{14}} =
	\mu_{\omega_{6}} =
	\mu_{\epsilon_{56}} =
	\mu_{\omega_{2}} =
	\mu_{\epsilon_{25}} = 0\,,
	\\
	H_3 &: \caseA &
	\mu_{\epsilon_{36}} =
	\mu_{\epsilon_{16}} =
	\mu_{\epsilon_{34}} &=
	\mu_{\epsilon_{14}} =0\,, \\ &&
	\textcolor{colGray}{\mu_{\omega_{3}}} + \textcolor{colGray}{\mu_{\omega_{1}}} + \textcolor{colGray}{\mu_{\epsilon_{23}}} + \mu_{\epsilon_{12}} &=
	\textcolor{colGray}{\mu_{\omega_{6}}} + \textcolor{colGray}{\mu_{\epsilon_{56}}} + \textcolor{colGray}{\mu_{\omega_{4}}} + \mu_{\epsilon_{45}}\,,
	\\
	H_4 &: \caseG &
	\textcolor{colGray}{\mu_{\omega_{5}}} + \textcolor{colGray}{\mu_{\omega_{1}}} + \mu_{\epsilon_{45}} + \textcolor{colGray}{\mu_{\epsilon_{14}}} &=
	\textcolor{colGray}{\mu_{\epsilon_{56}}} + \textcolor{colGray}{\mu_{\epsilon_{16}}} + \textcolor{colGray}{\mu_{\epsilon_{25}}} + \mu_{\epsilon_{12}}=
	\textcolor{colGray}{\mu_{\omega_{6}}} + \textcolor{colGray}{\mu_{\epsilon_{36}}} + \textcolor{colGray}{\mu_{\omega_{2}}} + \textcolor{colGray}{\mu_{\epsilon_{23}}}\,,
	\\
	H_6 &: \caseO &
	\mu_{\omega_{5}} =
	\mu_{\omega_{3}} =
	\mu_{\epsilon_{56}} &=
	\mu_{\epsilon_{36}} =0\,, \\ &&
	\textcolor{colGray}{\mu_{\omega_{4}}} + \textcolor{colGray}{\mu_{\epsilon_{14}}} + \textcolor{colGray}{\mu_{\omega_{2}}} + \mu_{\epsilon_{12}} &=
	\mu_{\epsilon_{45}} + \textcolor{colGray}{\mu_{\epsilon_{34}}} + \textcolor{colGray}{\mu_{\epsilon_{25}}} + \textcolor{colGray}{\mu_{\epsilon_{23}}}\,,
	\\
	H_9 &: \caseP &
	\mu_{\omega_{5}} =
	\mu_{\omega_{1}} =
	\mu_{\epsilon_{56}} &=
	\mu_{\epsilon_{16}} =
	\mu_{\omega_{4}} =
	\mu_{\epsilon_{34}} =
	\mu_{\omega_{2}} =
	\mu_{\epsilon_{23}} =0\,.
\end{align*}
The summands that are immediately zero are colored gray.
There is no nontrivial solution, since
$\mu_{\epsilon_{45}}$ and~$\mu_{\epsilon_{12}}$ must be zero.
Hence, these motion types are not consistent with 4-cycles.

The output of the whole method described in Section~\ref{sec:ramification}
obtained by computer is summarized in Table~\ref{tab:kttConsistentColorings} (see~{\cite{flexrilog,LegerskySupportingMaterial}}).
The motion types consistent with $H_1, \dots, H_9$ and the corresponding sets 
of active NAC-colorings are in the first and second column.
The third column indicates the number of motion types isomorphic to the chosen representative.

\begin{table}[htb]
	\centering
		\begin{tabular}{*{8}{C@{\hspace{7pt}}}Cccc}
			\multicolumn{9}{c}{\textbf{Motions types}} & \textbf{Active} & \textbf{\#} & \textbf{Type} \\
			\multicolumn{9}{c}{\textbf{ of} $(H_1,\dots,H_9)$} & \textbf{NAC-colorings} & \textbf{isomorphic} & \textbf{of motion} \\ \midrule
			\caseG{} & \caseG{} & \caseG{} & \caseG{} & \caseG{} & \caseG{} & \caseG{} & \caseG{} & \caseG{}
				& $\nac{\Ktt}$ & 1 & \multirow{3}{*}{Dixon I}\\
			\caseO{} & \caseO{} & \caseO{} & \caseG{} & \caseG{} & \caseG{} & \caseG{} & \caseG{} & \caseG{}
				& $\left\{\epsilon_{12}, \epsilon_{23}, \epsilon_{34}, \epsilon_{14}, \epsilon_{16}, \epsilon_{36}, \omega_{1}, \omega_{3}\right\}$
				& 6 \\
			\caseP{} & \caseO{} & \caseO{} & \caseG{} & \caseG{} & \caseO{} & \caseG{} & \caseG{} & \caseE{}
				& $\left\{\epsilon_{12}, \epsilon_{23}, \epsilon_{34}, \epsilon_{14}\right\}$ 
				& 9 \\ \midrule
			\caseP{} & \caseG{} & \caseG{} & \caseG{} & \caseA{} & \caseG{} & \caseG{} & \caseA{} & \caseG{}
				& $\left\{\epsilon_{12}, \epsilon_{34}, \omega_{5}, \omega_{6}\right\}$ 
				& 18 & Dixon II
		\end{tabular}
	\caption{Consistent motion types and active NAC-colorings of $\Ktt$.}
	\label{tab:kttConsistentColorings}
\end{table}

We show that the first three lines are of Dixon~I type.
Since $\epsilon_{12},\epsilon_{23},\epsilon_{34}$ are active,
we have	that 
the 4-cycles $(3, 4, 5, 6)$, $(1, 4, 5, 6)$ and $(1, 2, 5, 6)$
have perpendicular diagonals by Proposition~\ref{prop:orthogonalDiagonals}.
It immediately follows that $1,3$ and $5$, resp.\ $2, 4$ and $6$
are collinear and these two lines are orthogonal.
We remark that the second and third line are special cases when some 4-cycles are deltoids.
The 4-cycle $H_1$ in the third line is actually a rhombus. They are depicted in Figure~\ref{fig:DixonII}.

To show that the last line is a Dixon~II motion,
consider a realization compatible with edge lengths 
enforced by the motion types.
The positions of the vertices of the antiparallelogram $(2,3,6,5)$
together with the edge lengths
determine the position of the vertex $4$, since the 4-cycle $(3, 4, 5, 6)$
has perpendicular diagonals due to the active NAC-coloring $\epsilon_{12}$.
The position of $1$ is then determined since $(1,2,3,4)$ is a parallelogram.
Since $(1,4,5,6)$ is an antiparallelogram, the vertices
lie on the two rectangles as required, see Figure~\ref{fig:DixonII}.

\end{example}
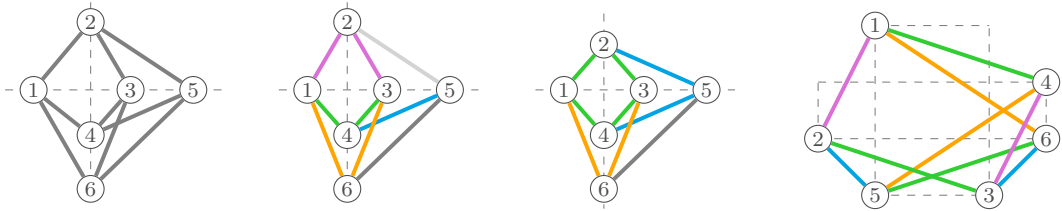
\begin{figure}[htb]
	\centering
	\begin{tikzpicture}[scale=0.75]
			\tikzset{vertex/.style={lnodeR}}
			\draw[gridl, dashed] (-1.5,0)edge(2.3,0);
			\draw[gridl, dashed] (0,1.55)edge(0,-2.1);
			\node[vertex] (2) at (1.8, 0) {5};
			\node[vertex] (5) at (-1., 0) {1};
			\node[vertex] (7) at (0.7, 0) {3};
			\node[vertex] (1) at (0, -1.75) {6};
			\node[vertex] (6) at (0,  1.2) {2};
			\node[vertex] (4) at (0, -0.8) {4};
			\draw[edge]  (6)edge(5) (5)edge(4) (7)edge(4) (7)edge(6);
			\draw[edge] (2)edge(4) (2)edge(6);
			\draw[edge] (1)edge(5) (7)edge(1) ;
			\draw[edge] (2)edge(1);
			\begin{scope}[xshift=4.5cm]
				\draw[gridl, dashed] (-1.2,0)edge(2.3,0);
				\draw[gridl, dashed] (0,1.55)edge(0,-2.1);
				\node[vertex] (2) at (1.8, 0) {5};
				\node[vertex] (5) at (-0.7, 0) {1};
				\node[vertex] (7) at (0.7, 0) {3};
				\node[vertex] (1) at (0, -1.75) {6};
				\node[vertex] (6) at (0,  1.2) {2};
				\node[vertex] (4) at (0, -0.8) {4};
				\draw[edge, col1] (7)edge(4)  (5)edge(4) ;
				\draw[edge, col2]  (7)edge(6) (6)edge(5) ;
				\draw[edge, col4] (2)edge(4) ;
				\draw[edge, col6] (2)edge(6);
				\draw[edge, col3] (1)edge(5) (7)edge(1) ;
				\draw[edge] (2)edge(1);
			\end{scope}
			\begin{scope}[xshift=9cm]
				\draw[gridl, dashed] (-1.2,0)edge(2.3,0);
				\draw[gridl, dashed] (0,1.35)edge(0,-2.1);
				\node[vertex] (2) at (1.8, 0) {5};
				\node[vertex] (5) at (-0.7, 0) {1};
				\node[vertex] (7) at (0.7, 0) {3};
				\node[vertex] (1) at (0, -1.75) {6};
				\node[vertex] (6) at (0,  0.8) {2};
				\node[vertex] (4) at (0, -0.8) {4};
				\draw[edge, col1]  (6)edge(5) (5)edge(4) (7)edge(4) (7)edge(6);
				\draw[edge, col4] (2)edge(4) (2)edge(6);
				\draw[edge, col3] (1)edge(5) (7)edge(1) ;
				\draw[edge] (2)edge(1);
			\end{scope}
		\end{tikzpicture}
		\qquad
		\begin{tikzpicture}[scale=0.75]
			\tikzset{lnode/.style={lnodeR}}
			\draw[gridl, dashed] (1,0)edge(3,0) (1,3)edge(3,3) (0,1)edge(4,1) (0,2)edge(4,2);
			\draw[gridl, dashed] (0,1)edge(0,2) (4,1)edge(4,2) (1,0)edge(1,3) (3,0)edge(3,3); 
			\node[lnode] (7) at (1,3) {1};
			\node[lnode] (5) at (4,1) {6};
			\node[lnode] (3) at (3,0) {3};
			\node[lnode] (6) at (0,1) {2};
			\node[lnode] (2) at (1,0) {5};
			\node[lnode] (4) at (4,2) {4};
			\draw[edge, col4] (5)edge(3) (2)edge(6) ;
			\draw[edge, col3]  (2)edge(4)  (5)edge(7);
			\draw[edge, col1] (2)edge(5) (7)edge(4) (3)edge(6);
			\draw[edge, col2] (7)edge(6) (3)edge(4) ;
		\end{tikzpicture}
  \caption{The first three realizations illustrate Dixon I motion of $\Ktt$ --- a general one and two special cases.
  The right one is Dixon II motion (same colors indicate same edge lengths).}
  \label{fig:DixonII}
\end{figure}
We remark that a computer-free proof can be obtained by combining the results from Sections~\ref{sec:comparingLC} and~\ref{sec:ramification} with the approach
used in~\cite{Gallet2019} to classify all motions of $K_{3,3}$ on the sphere.

\section{Classification of motions of \texorpdfstring{$Q_1$}{Q1}}
\label{sec:Q1classification}
The goal of this section is to classify all proper flexible labelings of the graph $Q_1$
using the tools developed in the previous sections.
We determine consistent motion types and active NAC-colorings by the method from Section~\ref{sec:consistency}.
These are obtained by computer using our implementation of the method in the \flexrilog\ package~\cite{flexrilog}.
Using Proposition~\ref{prop:degenerateTriangle}, it appears that the unique triangle is actually degenerate for every consistent motion type
(a proof without the method from the previous section was presented in~\cite{GLSeuroCG}).
Then we obtain necessary algebraic conditions on $\lambda$ 
from singleton NAC-colorings by the technique from Section~\ref{sec:comparingLC}.
Using the computer algebra system \sage{}~\cite{sagemath}, we identify six groups of motion types of 4-cycles,
giving altogether eight motion families --- irreducible algebraic sets of proper flexible labelings (see~\cite{LegerskySupportingMaterial} for the computations and \cite{ClassificationQ1} for a detailed analysis). 
Animations of these motions can be found in \cite{LegerskyAnimations}.
For the general idea we present the analysis of two cases in the next sections.
The remaining ones are illustrated in Figure~\ref{fig:Q1remaining} and fully analyzed in \cite{ClassificationQ1}.
The NAC-colorings of~$Q_1$ are shown in Figure~\ref{fig:Q1NACs}.
The figure also depicts the vertex labels we use. 

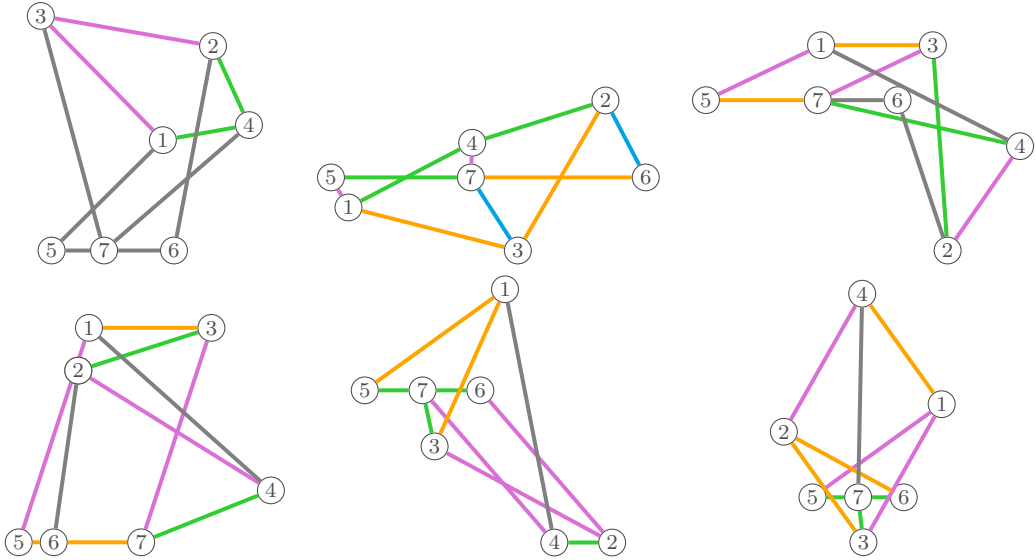
\begin{figure}[ht]
	\centering
	\begin{tabular}{ccc}
		\begin{tikzpicture}[scale=0.23]
			\tikzset{lnodesmall/.style={lnodeR}}
			\node[lnodesmall] (1) at (6.3640, 6.3640) {1};
			\node[lnodesmall] (2) at (9.2324, 11.791) {2};
			\node[lnodesmall] (3) at (-0.61602, 13.525) {3};
			\node[lnodesmall] (4) at (11.288, 7.2326) {4};
			\node[lnodesmall] (5) at (0.00000, 0.00000) {5};
			\node[lnodesmall] (6) at (7.0000, 0.00000) {6};
			\node[lnodesmall] (7) at (3.0000, 0.00000) {7};
				\draw[edge, col2](1)edge(3) (2)edge(3);
				\draw[edge, col1](1)edge(4) (2)edge(4);
				\draw[edge](4)edge(7) (2)edge(6) (7)edge(6) (5)edge(7) (1)edge(5) (3)edge(7);
		\end{tikzpicture}
		&
		\begin{tikzpicture}[scale=0.23]
			\tikzset{lnodesmall/.style={lnodeR}}
			\node[lnodesmall] (1) at (1.0000, -1.7320) {1};
			\node[lnodesmall] (2) at (15.696, 4.4375) {2};
			\node[lnodesmall] (3) at (10.686, -4.2171) {3};
			\node[lnodesmall] (4) at (8.0769, 1.9985) {4};
			\node[lnodesmall] (5) at (0.00000, 0.00000) {5};
			\node[lnodesmall] (6) at (18.000, 0.00000) {6};
			\node[lnodesmall] (7) at (8.0000, 0.00000) {7};
				\draw[edge, col1](5)edge(7) (1)edge(4) (2)edge(4);
				\draw[edge, col2](4)edge(7) (1)edge(5);
				\draw[edge, col3](6)edge(7) (1)edge(3) (2)edge(3);
				\draw[edge, col4](2)edge(6) (3)edge(7);
		\end{tikzpicture}
		&
		\begin{tikzpicture}[scale=0.21]
			\tikzset{lnodesmall/.style={lnodeR}}
			\node[lnodesmall] (1) at (7.2077, 3.4711) {1};
			\node[lnodesmall] (2) at (15.135, -9.4958) {2};
			\node[lnodesmall] (3) at (14.208, 3.4711) {3};
			\node[lnodesmall] (4) at (19.671, -2.9059) {4};
			\node[lnodesmall] (5) at (0.00000, 0.00000) {5};
			\node[lnodesmall] (6) at (12.000, 0.00000) {6};
			\node[lnodesmall] (7) at (7.0000, 0.00000) {7};
				\draw[edge, col1](4)edge(7) (2)edge(3);
				\draw[edge, col2](1)edge(5) (3)edge(7) (2)edge(4);
				\draw[edge, col3](5)edge(7) (1)edge(3);
				\draw[edge](2)edge(6) (6)edge(7) (1)edge(4);
		\end{tikzpicture}
		\\
		\begin{tikzpicture}[scale=0.23]
			\tikzset{lnodesmall/.style={lnodeR}}
			\begin{scope}
				\node[lnodesmall] (1) at (4.0172, 12.364) {1};
				\node[lnodesmall] (2) at (3.4058, 9.9007) {2};
				\node[lnodesmall] (3) at (11.017, 12.364) {3};
				\node[lnodesmall] (4) at (14.419, 2.9933) {4};
				\node[lnodesmall] (5) at (0.00000, 0.00000) {5};
				\node[lnodesmall] (6) at (2.0000, 0.00000) {6};
				\node[lnodesmall] (7) at (7.0000, 0.00000) {7};
				\draw[edge, col1](4)edge(7) (2)edge(3);
				\draw[edge, col2](1)edge(5) (3)edge(7) (2)edge(4);
				\draw[edge, col3](5)edge(6) (6)edge(7) (1)edge(3);
				\draw[edge](2)edge(6) (1)edge(4);
				\node[lnodesmall] at (2) {2};
			\end{scope}
		\end{tikzpicture}
		&
		\begin{tikzpicture}[scale=0.38]
			\tikzset{lnodesmall/.style={lnodeR}}
			\node[lnodesmall] (1) at (4.8541, 3.5267) {1};
			\node[lnodesmall] (2) at (8.5612, -5.3099) {2};
			\node[lnodesmall] (3) at (2.4170, -1.9560) {3};
			\node[lnodesmall] (4) at (6.5612, -5.3099) {4};
			\node[lnodesmall] (5) at (0.00000, 0.00000) {5};
			\node[lnodesmall] (6) at (4.0000, 0.00000) {6};
			\node[lnodesmall] (7) at (2.0000, 0.00000) {7};
				\draw[edge, col1](6)edge(7) (5)edge(7) (3)edge(7) (2)edge(4);
				\draw[edge, col2](4)edge(7) (2)edge(6) (2)edge(3);
				\draw[edge, col3](1)edge(5) (1)edge(3);
				\draw[edge](1)edge(4);
		\end{tikzpicture}
		&
		\begin{tikzpicture}[scale=0.3]
			\tikzset{lnodesmall/.style={lnodeR}}
			\node[lnodesmall] (1) at (5.6631, 4.1145) {1};
			\node[lnodesmall] (2) at (-1.2542, 2.8972) {2};
			\node[lnodesmall] (3) at (2.2314, -1.9866) {3};
			\node[lnodesmall] (4) at (2.1776, 8.9982) {4};
			\node[lnodesmall] (5) at (0.00000, 0.00000) {5};
			\node[lnodesmall] (6) at (4.0000, 0.00000) {6};
			\node[lnodesmall] (7) at (2.0000, 0.00000) {7};
				\draw[edge, col1](6)edge(7) (5)edge(7) (3)edge(7);
				\draw[edge, col2](1)edge(5) (1)edge(3) (2)edge(4);
				\draw[edge, col3](2)edge(6) (1)edge(4) (2)edge(3);
				\draw[edge](4)edge(7);
		\end{tikzpicture}
	\end{tabular}
	\caption{The graph $Q_1$ has 8 families of proper flexible labelings.
	Figures~\ref{fig:Q1_case_I_example} and \ref{fig:Q1_anim_IIplus} show two of them.
	Representatives for the remaining ones are depicted here.}
	\label{fig:Q1remaining}
\end{figure}

\begin{figure}[htb]
	\centering
	\begin{tabular}{cccccc}
		\begin{tikzpicture}[scale=0.8]
		    \node[lnodesmall] (1) at (-1.0, 0.00) {1};
		    \node[lnodesmall] (2) at (1.0, 0.00) {2};
		    \node[lnodesmall] (3) at (-0.50, 0.866025000000000) {3};
		    \node[lnodesmall] (4) at (0.50, 0.866025000000000) {4};
		    \node[lnodesmall] (5) at (-0.50, -0.866025000000000) {5};
		    \node[lnodesmall] (6) at (0.50, -0.866025000000000) {6};
		    \node[lnodesmall] (7) at (0, 0) {7};
		    \draw[redge] (1)edge(3) (2)edge(4) (2)edge(6) (4)edge(7) (5)edge(6) (5)edge(7) (6)edge(7)  ;
		    \draw[bedge] (1)edge(4) (1)edge(5) (2)edge(3) (3)edge(7)  ;
		\end{tikzpicture}
		&
		\begin{tikzpicture}[scale=0.8]
		    \node[lnodesmall] (1) at (-1.0, 0.00) {1};
		    \node[lnodesmall] (2) at (1.0, 0.00) {2};
		    \node[lnodesmall] (3) at (-0.50, 0.866025000000000) {3};
		    \node[lnodesmall] (4) at (0.50, 0.866025000000000) {4};
		    \node[lnodesmall] (5) at (-0.50, -0.866025000000000) {5};
		    \node[lnodesmall] (6) at (0.50, -0.866025000000000) {6};
		    \node[lnodesmall] (7) at (0, 0) {7};
		    \draw[redge] (1)edge(4) (2)edge(3) (2)edge(6) (3)edge(7) (5)edge(6) (5)edge(7) (6)edge(7)  ;
		    \draw[bedge] (1)edge(3) (1)edge(5) (2)edge(4) (4)edge(7)  ;
		\end{tikzpicture}
		&
		\begin{tikzpicture}[scale=0.8]
		    \node[lnodesmall] (1) at (-1.0, 0.00) {1};
		    \node[lnodesmall] (2) at (1.0, 0.00) {2};
		    \node[lnodesmall] (3) at (-0.50, 0.866025000000000) {3};
		    \node[lnodesmall] (4) at (0.50, 0.866025000000000) {4};
		    \node[lnodesmall] (5) at (-0.50, -0.866025000000000) {5};
		    \node[lnodesmall] (6) at (0.50, -0.866025000000000) {6};
		    \node[lnodesmall] (7) at (0, 0) {7};
		    \draw[redge] (1)edge(4) (1)edge(5) (2)edge(3) (4)edge(7) (5)edge(6) (5)edge(7) (6)edge(7)  ;
		    \draw[bedge] (1)edge(3) (2)edge(4) (2)edge(6) (3)edge(7)  ;
		\end{tikzpicture}
		&
		\begin{tikzpicture}[scale=0.8]
		    \node[lnodesmall] (1) at (-1.0, 0.00) {1};
		    \node[lnodesmall] (2) at (1.0, 0.00) {2};
		    \node[lnodesmall] (3) at (-0.50, 0.866025000000000) {3};
		    \node[lnodesmall] (4) at (0.50, 0.866025000000000) {4};
		    \node[lnodesmall] (5) at (-0.50, -0.866025000000000) {5};
		    \node[lnodesmall] (6) at (0.50, -0.866025000000000) {6};
		    \node[lnodesmall] (7) at (0, 0) {7};
		    \draw[redge] (1)edge(3) (1)edge(5) (2)edge(4) (3)edge(7) (5)edge(6) (5)edge(7) (6)edge(7)  ;
		    \draw[bedge] (1)edge(4) (2)edge(3) (2)edge(6) (4)edge(7)  ;
		\end{tikzpicture}
		&
		\begin{tikzpicture}[scale=0.8]
		    \node[lnodesmall] (1) at (-1.0, 0.00) {1};
		    \node[lnodesmall] (2) at (1.0, 0.00) {2};
		    \node[lnodesmall] (3) at (-0.50, 0.866025000000000) {3};
		    \node[lnodesmall] (4) at (0.50, 0.866025000000000) {4};
		    \node[lnodesmall] (5) at (-0.50, -0.866025000000000) {5};
		    \node[lnodesmall] (6) at (0.50, -0.866025000000000) {6};
		    \node[lnodesmall] (7) at (0, 0) {7};
		    \draw[redge] (1)edge(3) (1)edge(4) (2)edge(6) (5)edge(6) (5)edge(7) (6)edge(7)  ;
		    \draw[bedge] (1)edge(5) (2)edge(3) (2)edge(4) (3)edge(7) (4)edge(7)  ;
		\end{tikzpicture}
		&
		\begin{tikzpicture}[scale=0.8]
		    \node[lnodesmall] (1) at (-1.0, 0.00) {1};
		    \node[lnodesmall] (2) at (1.0, 0.00) {2};
		    \node[lnodesmall] (3) at (-0.50, 0.866025000000000) {3};
		    \node[lnodesmall] (4) at (0.50, 0.866025000000000) {4};
		    \node[lnodesmall] (5) at (-0.50, -0.866025000000000) {5};
		    \node[lnodesmall] (6) at (0.50, -0.866025000000000) {6};
		    \node[lnodesmall] (7) at (0, 0) {7};
		    \draw[redge] (1)edge(5) (2)edge(3) (2)edge(4) (5)edge(6) (5)edge(7) (6)edge(7)  ;
		    \draw[bedge] (1)edge(3) (1)edge(4) (2)edge(6) (3)edge(7) (4)edge(7)  ;
		\end{tikzpicture}
		\\
		$\epsilon_{13}$ & $\epsilon_{14}$ & $\epsilon_{23}$ & $\epsilon_{24}$ & $\gamma_1$ & $\gamma_2$ \\[5pt]
		\begin{tikzpicture}[scale=0.8]
		    \node[lnodesmall] (1) at (-1.0, 0.00) {1};
		    \node[lnodesmall] (2) at (1.0, 0.00) {2};
		    \node[lnodesmall] (3) at (-0.50, 0.866025000000000) {3};
		    \node[lnodesmall] (4) at (0.50, 0.866025000000000) {4};
		    \node[lnodesmall] (5) at (-0.50, -0.866025000000000) {5};
		    \node[lnodesmall] (6) at (0.50, -0.866025000000000) {6};
		    \node[lnodesmall] (7) at (0, 0) {7};
		    \draw[redge] (1)edge(3) (1)edge(4) (2)edge(3) (2)edge(4) (5)edge(6) (5)edge(7) (6)edge(7)  ;
		    \draw[bedge] (1)edge(5) (2)edge(6) (3)edge(7) (4)edge(7)  ;
		\end{tikzpicture}
		&
		\begin{tikzpicture}[scale=0.8]
		    \node[lnodesmall] (1) at (-1.0, 0.00) {1};
		    \node[lnodesmall] (2) at (1.0, 0.00) {2};
		    \node[lnodesmall] (3) at (-0.50, 0.866025000000000) {3};
		    \node[lnodesmall] (4) at (0.50, 0.866025000000000) {4};
		    \node[lnodesmall] (5) at (-0.50, -0.866025000000000) {5};
		    \node[lnodesmall] (6) at (0.50, -0.866025000000000) {6};
		    \node[lnodesmall] (7) at (0, 0) {7};
		    \draw[redge] (2)edge(3) (2)edge(4) (2)edge(6) (3)edge(7) (4)edge(7) (5)edge(6) (5)edge(7) (6)edge(7)  ;
		    \draw[bedge] (1)edge(3) (1)edge(4) (1)edge(5)  ;
		\end{tikzpicture}
		&
		\begin{tikzpicture}[scale=0.8]
		    \node[lnodesmall] (1) at (-1.0, 0.00) {1};
		    \node[lnodesmall] (2) at (1.0, 0.00) {2};
		    \node[lnodesmall] (3) at (-0.50, 0.866025000000000) {3};
		    \node[lnodesmall] (4) at (0.50, 0.866025000000000) {4};
		    \node[lnodesmall] (5) at (-0.50, -0.866025000000000) {5};
		    \node[lnodesmall] (6) at (0.50, -0.866025000000000) {6};
		    \node[lnodesmall] (7) at (0, 0) {7};
		    \draw[redge] (1)edge(3) (1)edge(4) (1)edge(5) (3)edge(7) (4)edge(7) (5)edge(6) (5)edge(7) (6)edge(7)  ;
		    \draw[bedge] (2)edge(3) (2)edge(4) (2)edge(6)  ;
		\end{tikzpicture}
		&
		\begin{tikzpicture}[scale=0.8]
		    \node[lnodesmall] (1) at (-1.0, 0.00) {1};
		    \node[lnodesmall] (2) at (1.0, 0.00) {2};
		    \node[lnodesmall] (3) at (-0.50, 0.866025000000000) {3};
		    \node[lnodesmall] (4) at (0.50, 0.866025000000000) {4};
		    \node[lnodesmall] (5) at (-0.50, -0.866025000000000) {5};
		    \node[lnodesmall] (6) at (0.50, -0.866025000000000) {6};
		    \node[lnodesmall] (7) at (0, 0) {7};
		    \draw[redge] (1)edge(4) (1)edge(5) (2)edge(4) (2)edge(6) (4)edge(7) (5)edge(6) (5)edge(7) (6)edge(7)  ;
		    \draw[bedge] (1)edge(3) (2)edge(3) (3)edge(7)  ;
		\end{tikzpicture}
		&
		\begin{tikzpicture}[scale=0.8]
		    \node[lnodesmall] (1) at (-1.0, 0.00) {1};
		    \node[lnodesmall] (2) at (1.0, 0.00) {2};
		    \node[lnodesmall] (3) at (-0.50, 0.866025000000000) {3};
		    \node[lnodesmall] (4) at (0.50, 0.866025000000000) {4};
		    \node[lnodesmall] (5) at (-0.50, -0.866025000000000) {5};
		    \node[lnodesmall] (6) at (0.50, -0.866025000000000) {6};
		    \node[lnodesmall] (7) at (0, 0) {7};
		    \draw[redge] (1)edge(3) (1)edge(5) (2)edge(3) (2)edge(6) (3)edge(7) (5)edge(6) (5)edge(7) (6)edge(7)  ;
		    \draw[bedge] (1)edge(4) (2)edge(4) (4)edge(7)  ;
		\end{tikzpicture}
		&
		\begin{tikzpicture}[scale=0.8]
		    \node[lnodesmall] (1) at (-1.0, 0.00) {1};
		    \node[lnodesmall] (2) at (1.0, 0.00) {2};
		    \node[lnodesmall] (3) at (-0.50, 0.866025000000000) {3};
		    \node[lnodesmall] (4) at (0.50, 0.866025000000000) {4};
		    \node[lnodesmall] (5) at (-0.50, -0.866025000000000) {5};
		    \node[lnodesmall] (6) at (0.50, -0.866025000000000) {6};
		    \node[lnodesmall] (7) at (0, 0) {7};
		    \draw[redge] (1)edge(5) (2)edge(6) (5)edge(6) (5)edge(7) (6)edge(7)  ;
		    \draw[bedge] (1)edge(3) (1)edge(4) (2)edge(3) (2)edge(4) (3)edge(7) (4)edge(7)  ;
		\end{tikzpicture}
		\\
		$\eta$ & $\psi_1$ & $\psi_2$ & $\phi_3$ & $\phi_4$ & $\zeta$
	\end{tabular}
	\caption{NAC-colorings of the graph $Q_1$, modulo conjugation.}
	\label{fig:Q1NACs}
\end{figure}
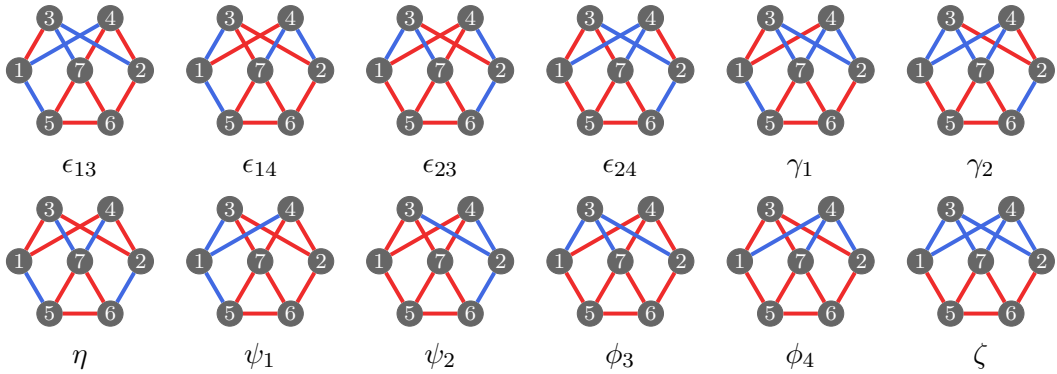

Table~\ref{tab:Q1_consistent_motion_types} summarizes the computation of consistent motion types,
where $H_1, \dots, H_7$ are the 4-cycles $(1, 3, 2, 4)$, $(1, 3, 7, 4)$, $(2, 3, 7, 4)$, 
$(1, 3, 7, 5)$, $(1, 4, 7, 5)$, $(2, 4, 7, 6)$, $(2, 3, 7, 6)$ respectively.
The column ``Motion family'' indicates to which family 
of proper flexible labelings of $Q_1$ a particular row belongs.
The next column gives information on the dimension of this family.
\begin{table}[ht]
	\centering
		\begin{tabular}{c*{7}{C@{\hspace{6pt}}}cc@{\hspace{5pt}}cc@{\hspace{5pt}}c}
			& \multicolumn{7}{l}{\textbf{Motions types}} & \textbf{Active} & \textbf{\#} & \textbf{Motion}
				& \multirow{2}{*}{\textbf{Dim.}} & \multirow{2}{*}{\textbf{}} \\
			& \multicolumn{7}{l}{\textbf{ of} $(H_1,\dots,H_7)$} & \textbf{NAC-colorings} 
					& \textbf{isom.} & \textbf{family} \\ 
			\midrule
			\newCaseQone\label{Q1:a} & \caseP{} & \caseG{} & \caseG{} & \caseP{} & \caseG{} & \caseP{} & \caseG{} &
				 $\{\epsilon_{13}, \epsilon_{24}, \eta\}$
				  & $2$ & I & 4\\
			\newCaseQone\label{Q1:b} & \caseP{} & \caseO{} & \caseA{} & \caseP{} & \caseO{} & \caseP{} & \caseE{} &
				 $\{\epsilon_{13}, \eta\}$
				  & $4$ & $\!\subset$ I, $\IVminus$, V, VI & 2 & Sec. \ref{subsec:Q1_typeI} \\
			\newCaseQone\label{Q1:c} & \caseP{} & \caseE{} & \caseE{} & \caseP{} & \caseA{} & \caseP{} & \caseA{} &
				 $\{\epsilon_{13}, \epsilon_{24}\}$
				  & $2$ & $\subset$ I, II, III & 2\\
			\midrule
			\newCaseQone\label{Q1:d} & \caseO{} & \caseG{} & \caseG{} & \caseG{} & \caseG{} & \caseG{} & \caseG{} &
				 $\{\epsilon_{ij}, \gamma_{1}, \gamma_{2}, \psi_{1}, \psi_{2}\}$
				 & $1$ & $\IIminus\cup \IIplus$  & 5 &\multirow{4}{*}{Sec. \ref{subsec:Q1_typeII}} \\
			\newCaseQone\label{Q1:e} & \caseP{} & \caseE{} & \caseE{} & \caseG{} & \caseG{} & \caseG{} & \caseG{} &
				 $\{\epsilon_{13}, \epsilon_{14}, \epsilon_{23}, \epsilon_{24}\}$
				 & $1$ & $\subset \IIminus, \IIplus$ & 4 \\
			\newCaseQone\label{Q1:f} & \caseO{} & \caseG{} & \caseG{} & \caseP{} & \caseG{} & \caseG{} & \caseA{} &
				 $\{\epsilon_{13}, \epsilon_{24}, \gamma_{1}, \psi_{2}\}$
				 & $4$ & $\subset\IIminus$ & 3 \\
			\newCaseQone\label{Q1:g} & \caseO{} & \caseG{} & \caseG{} & \caseE{} & \caseG{} & \caseG{} & \caseE{} &
				 $\{\epsilon_{13}, \epsilon_{23}, \gamma_{1}, \gamma_{2}\}$
				 & $2$ & $\subset\IIminus$, deg. & 2\\
			\midrule
			\newCaseQone\label{Q1:h} & \caseO{} & \caseG{} & \caseG{} & \caseG{} & \caseA{} & \caseG{} & \caseA{} &
				 $\{\epsilon_{13}, \epsilon_{24}, \psi_{1}, \psi_{2}, \zeta\}$
				 & $2$ & III & 3 & \cite{ClassificationQ1}\\
			\midrule
			\newCaseQone\label{Q1:i} & \caseG{} & \caseG{} & \caseA{} & \caseP{} & \caseG{} & \caseG{} & \caseG{} &
				 $\{\epsilon_{13}, \eta, \phi_{4}, \psi_{2}\}$
				 & $4$ & $\IVminus\cup \IVplus$ & 4 & \cite{ClassificationQ1}\\
			\midrule
			\newCaseQone\label{Q1:j} & \caseG{} & \caseG{} & \caseA{} & \caseE{} & \caseG{} & \caseP{} & \caseE{} &
				 $\{\epsilon_{13}, \eta, \gamma_{2}, \phi_{3}\}$
				 & $4$ & V & 3 & \cite{ClassificationQ1}\\
			\midrule
			\newCaseQone\label{Q1:k} & \caseP{} & \caseG{} & \caseG{} & \caseE{} & \caseG{} & \caseG{} & \caseE{} &
				 $\{\epsilon_{13}, \epsilon_{23}, \eta, \zeta\}$
				 & $2$ & VI & 3 & \cite{ClassificationQ1}\\
		\end{tabular}
	\caption{The cases of consistent motion types and active NAC-colorings of $Q_1$.}
	\label{tab:Q1_consistent_motion_types}
\end{table}

\subsection{Motion family I}
\label{subsec:Q1_typeI}
We prove that Case~\ref{Q1:a} gives a  4-dimensional family of proper flexible labelings.
These proper flexible labelings can be actually constructed by \cite[Lemma~4.4]{movableGraphs}.
The motion types in Case \ref{Q1:a} enforce some equal edge lengths:
$\textcolor{col1}{\lambda_{13} =  \lambda_{24} = \lambda_{57} =  \lambda_{67}}$,
$\textcolor{col4}{\lambda_{14} =  \lambda_{23}}$, 
$\textcolor{col3}{\lambda_{15} =  \lambda_{37}}$,
$\textcolor{col2}{\lambda_{26} =  \lambda_{47}}$
(see also Figure~\ref{fig:Q1_edge_lengths_ad}).
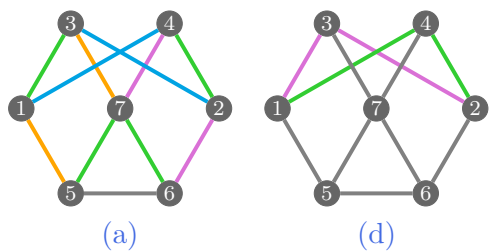
\begin{figure}[b]
	\centering
		\begin{tabular}{ccc}
			\begin{tikzpicture}[scale=1.3]
				\node[lnodesmall] (1) at (-1.0000, 0.00000) {1};
				\node[lnodesmall] (2) at (1.0000, 0.00000) {2};
				\node[lnodesmall] (3) at (-0.50000, 0.86600) {3};
				\node[lnodesmall] (4) at (0.50000, 0.86600) {4};
				\node[lnodesmall] (5) at (-0.50000, -0.86600) {5};
				\node[lnodesmall] (6) at (0.50000, -0.86600) {6};
				\node[lnodesmall] (7) at (0.00000, 0.00000) {7};
				\draw[edge,col1](6)edge(7) (5)edge(7) (1)edge(3) (2)edge(4);
				\draw[edge,col2](4)edge(7) (2)edge(6);
				\draw[edge,col3](1)edge(5) (3)edge(7);
				\draw[edge,col4](1)edge(4) (2)edge(3);
				\draw[edge](5)edge(6);
			\end{tikzpicture}
			&
			\begin{tikzpicture}[scale=1.3]
				\node[lnodesmall] (1) at (-1.0000, 0.00000) {1};
				\node[lnodesmall] (2) at (1.0000, 0.00000) {2};
				\node[lnodesmall] (3) at (-0.50000, 0.86600) {3};
				\node[lnodesmall] (4) at (0.50000, 0.86600) {4};
				\node[lnodesmall] (5) at (-0.50000, -0.86600) {5};
				\node[lnodesmall] (6) at (0.50000, -0.86600) {6};
				\node[lnodesmall] (7) at (0.00000, 0.00000) {7};
				\draw[edge, col2](1)edge(3) (2)edge(3);
				\draw[edge, col1](1)edge(4) (2)edge(4);
				\draw[edge](4)edge(7) (2)edge(6) (6)edge(7) (5)edge(6) (5)edge(7) (1)edge(5) (3)edge(7);
			\end{tikzpicture}
			\\
			\ref{Q1:a} & \ref{Q1:d}
		\end{tabular}
	\caption{Edge lengths enforced by motion types~\ref{Q1:a} and \ref{Q1:d}: same color indicates equality of edge lengths (except for gray).}
	\label{fig:Q1_edge_lengths_ad}
\end{figure}
Since the NAC-colorings $\epsilon_{13}, \epsilon_{24}$ and $\eta$ are singletons,
we can compare leading coefficients as described in Section~\ref{sec:comparingLC}
to obtain the following equations for~$\lambda$:
\begin{align}
	\label{eq:Q1_lcs_abc}
	(\lambda_{56} - 2 \lambda_{67})  (\lambda_{56} + 2 \lambda_{67})  
		(\lambda_{47} -  \lambda_{67})  (\lambda_{47} + \lambda_{67})&=0 & & (\epsilon_{13} \text{ active})\,, \notag \\
	(\lambda_{56} - 2 \lambda_{67})  (\lambda_{56} + 2 \lambda_{67})  
		(\lambda_{37} -  \lambda_{67})  (\lambda_{37} + \lambda_{67})&=0 & & (\epsilon_{24} \text{ active})\,, \\
	(\lambda_{56} - 2 \lambda_{67})  (\lambda_{56} + 2 \lambda_{67})  
		(\lambda_{23} -  \lambda_{67})  (\lambda_{23} + \lambda_{67})&=0 & & (\eta \text{ active})\,. \notag
\end{align}
The second and fourth factor cannot vanish due to positive edge lengths.
If the first factor does not vanish,
then $\lambda_{13} =  \lambda_{24} =\lambda_{57} =  \lambda_{67}= \lambda_{14} 
=  \lambda_{23}= \lambda_{15} =  \lambda_{37}= \lambda_{26} =  \lambda_{47}$,
which contradicts injective realizations
(e.\,g.\ the $K_{2,3}$ subgraph induced by $1, 2, 3, 4$ and~$7$ has no injective realization).
Hence, $\lambda_{56} = 2 \lambda_{67} = \lambda_{57} + \lambda_{67}$,
i.e., the triangle $(5,6,7)$ is degenerate.

If we fix the vertices $5$ and $6$ and
consider $\lambda_{67},\lambda_{23}, \lambda_{37}$ and $\lambda_{47}$ also as variables,
then the zero set of the system~\eqref{eq:mainSystemOfEquations}
has dimension $5$, as one can check by Gr\"obner basis computation.
There are only $4$ parameters, therefore if we fix $\lambda_{67},\lambda_{23}, \lambda_{37}$ and $\lambda_{47}$,
then there is a curve of solutions. Hence, the labeling is flexible whenever it is realizable.
If the parameters are general enough, for instance pairwise distinct,
then the labeling is proper flexible.
A realization compatible with such an instance is shown in Figure~\ref{fig:Q1_case_I_example}.
Cases \ref{Q1:b} and \ref{Q1:c} are special cases of Case \ref{Q1:a}, see~\cite{ClassificationQ1}.

\begin{figure}[ht]
	\centering
		\begin{tabular}{ccc}
			\begin{tikzpicture}[scale=0.9]
				\tikzset{lnodesmall/.style={lnodeR}}
				\node[lnodesmall] (1) at (-1700/1189, 2020/1189) {1};
				\node[lnodesmall] (2) at (68/41, 120/41) {2};
				\node[lnodesmall] (3) at (-511/1189, 2020/1189) {3};
				\node[lnodesmall] (4) at (27/41, 120/41) {4};
				\node[lnodesmall] (5) at (-1, 0) {5};
				\node[lnodesmall] (6) at (1, 0) {6};
				\node[lnodesmall] (7) at (0, 0) {7};
				\draw[edge,col1](6)edge(7) (5)edge(7) (1)edge(3) (2)edge(4);
				\draw[edge,col2](4)edge(7) (2)edge(6);
				\draw[edge,col3](1)edge(5) (3)edge(7);
				\draw[edge,col4](1)edge(4) (2)edge(3);
			\end{tikzpicture}
			&
			\begin{tikzpicture}[scale=2]
				\tikzset{lnodesmall/.style={lnodeR}}
				\node[lnodesmall] (1) at (-0.10443, 0.48897) {1};
				\node[lnodesmall] (2) at (1.6090, 1.3708) {2};
				\node[lnodesmall] (3) at (0.39557, 0.48897) {3};
				\node[lnodesmall] (4) at (1.1090, 1.3708) {4};
				\node[lnodesmall] (5) at (0.00000, 0.00000) {5};
				\node[lnodesmall] (6) at (1.0000, 0.00000) {6};
				\node[lnodesmall] (7) at (0.50000, 0.00000) {7};
				\draw[edge,col2](4)edge(7) (2)edge(6) (1)edge(4) (2)edge(3);
				\draw[edge,col1](6)edge(7) (5)edge(7) (1)edge(5) (1)edge(3) (3)edge(7) (2)edge(4);
			\end{tikzpicture}
			&
			\begin{tikzpicture}
				\tikzset{lnodesmall/.style={lnodeR}}
				\node[lnodesmall] (1) at (2.4268, 1.7637) {1};
				\node[lnodesmall] (2) at (3.2179, 2.7417) {2};
				\node[lnodesmall] (3) at (3.4268, 1.7637) {3};
				\node[lnodesmall] (4) at (2.2179, 2.7417) {4};
				\node[lnodesmall] (5) at (0.00000, 0.00000) {5};
				\node[lnodesmall] (6) at (2.0000, 0.00000) {6};
				\node[lnodesmall] (7) at (1.0000, 0.00000) {7};
				\draw[edge,col1](6)edge(7) (5)edge(7) (1)edge(4) (1)edge(3) (2)edge(3) (2)edge(4);
				\draw[edge,col2](4)edge(7) (2)edge(6) (1)edge(5) (3)edge(7);
			\end{tikzpicture}
			\\
			\ref{Q1:a} & \ref{Q1:b} & \ref{Q1:c}
		\end{tabular}
		\caption{Realizations of $Q_1$ compatible with proper flexible labelings of family~I.}
		\label{fig:Q1_case_I_example}
\end{figure}
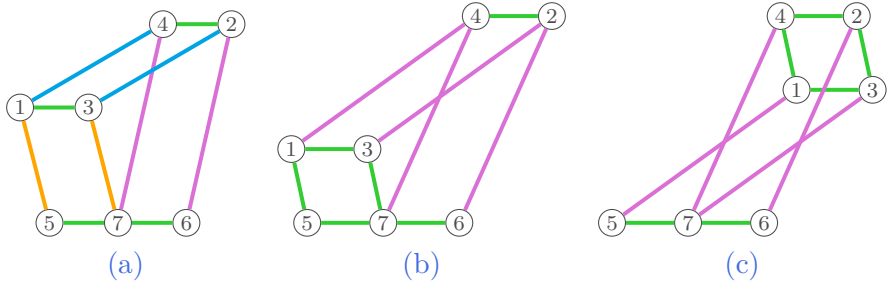

\subsection{Motion families \texorpdfstring{$\IIminus$ and $\IIplus$}{II- and II+}}
\label{subsec:Q1_typeII}
We focus now on Case~\ref{Q1:d}, since the Cases~\ref{Q1:e}--\ref{Q1:g} are its subcases~\cite{ClassificationQ1}.
If $\gamma_1$, resp.\ $\gamma_2$ is active,
then the 4-cycle $H_3=(2, 3, 7, 4)$, resp.\ $H_2=(1, 3, 7, 4)$
has orthogonal diagonals by Proposition~\ref{prop:orthogonalDiagonals}.
Together with the motion types of $H_1, H_2$ and $H_3$,
this implies that the vertices $1, 2$ and $7$ are collinear.
The motion types enforce
$\lambda_{13} =  \lambda_{23} \text{ and } \lambda_{14} =  \lambda_{24}$ (see Figure~\ref{fig:Q1_edge_lengths_ad}).
The triangle $(5, 6, 7)$ must be  degenerate,
otherwise Proposition~\ref{prop:degenerateTriangle} for~$\epsilon_{13}$ gives  $\lambda_{24}=\lambda_{47}$,
but three distinct collinear points $1, 2$ and $7$ cannot 
have the same distance to vertex $4$.
Allowing also negative edge lengths in the degenerate triangle,
all three cases are covered by the equation $\lambda_{56} = \lambda_{57} + \lambda_{67}$.

The active NAC-colorings $\epsilon_{13}, \epsilon_{14}, \epsilon_{23}, \epsilon_{24}, \gamma_{1}$ and $\gamma_{2}$
are singletons.
The zero set of the system of equations they provide together with the enforced edge lengths and the triangle equality $\lambda_{56} = \lambda_{57} + \lambda_{67}$
has two irreducible components $\IIminus$ and $\IIplus$ given by
\begin{align*}
- \lambda_{24}^{2} + \alpha\lambda_{15} \lambda_{26} + \lambda_{47}^{2} + \lambda_{57} \lambda_{67} =0\,, \quad
\lambda_{26} \lambda_{57} + \alpha \lambda_{15} \lambda_{67} &=0\,, \\
\lambda_{13} =  \lambda_{23} \,, \quad
\lambda_{14} =  \lambda_{24} \,, \quad
\lambda_{24}^{2} + \lambda_{37}^{2} = \lambda_{23}^{2} + \lambda_{47}^{2} \,, \quad
\lambda_{57} + \lambda_{67} &= \lambda_{56} \,,
\end{align*}
where $\alpha\in\{-1, 1\}$ (see~\cite{LegerskySupportingMaterial}).
The dimension of both varieties is 5.
If we construct the system of equations for vertex coordinates,
taking the $\lambda_{ij}$ as variables, the dimension is~6.
Therefore, a generic fiber of the projection from the whole zero set
to $\IIminus$, resp.~$\IIplus$, has positive dimension.
Hence, a generic $\lambda$ in $\IIminus\cup\IIplus$ that is realizable is flexible.
An example of a proper flexible labeling in $\IIplus$ is
$\lambda_{13} =\lambda_{23}=14$,
$\lambda_{15} =9 $,
$\lambda_{26} =12 $,
$\lambda_{37} =10 $,
$\lambda_{47} =5 $,
$\lambda_{14} =\lambda_{24}=11 $,
$\lambda_{56} =1$,
$\lambda_{57} =-3 $,
$\lambda_{67} =4$.
A parametrization of the motion can be found in~\cite{ClassificationQ1},
as well as an example of a proper flexible labeling in $\IIminus$.
Figure~\ref{fig:Q1_anim_IIplus} illustrates $\IIplus$.

\begin{figure}[htb]
	\centering
		\begin{tikzpicture}[scale=0.23]
			\tikzset{lnodesmall/.style={lnodeR}}
			\node[lnodesmall] (1) at (6.3640, 6.3640) {1};
			\node[lnodesmall] (2) at (-10.013, -4.7661) {2};
			\node[lnodesmall] (3) at (-7.3884, 8.9857) {3};
			\node[lnodesmall] (4) at (-4.5190, 4.7637) {4};
			\node[lnodesmall] (5) at (0.00000, 0.00000) {5};
			\node[lnodesmall] (6) at (1.0000, 0.00000) {6};
			\node[lnodesmall] (7) at (-3.0000, 0.00000) {7};
				\draw[edge, col2](1)edge(3) (2)edge(3);
				\draw[edge, col1](1)edge(4) (2)edge(4);
				\draw[edge](4)edge(7) (2)edge(6) (5)edge(6) (5)edge(7) (1)edge(5) (3)edge(7);
			\node[lnodesmall] (5) at (5) {5};
			\begin{scope}[xshift=22cm]
				\node[lnodesmall] (1) at (-2.3294, 8.6933) {1};
				\node[lnodesmall] (2) at (-3.8468, -10.978) {2};
				\node[lnodesmall] (3) at (-12.993, -0.37806) {3};
				\node[lnodesmall] (4) at (-7.9407, -0.76780) {4};
				\node[lnodesmall] (5) at (0.00000, 0.00000) {5};
				\node[lnodesmall] (6) at (1.0000, 0.00000) {6};
				\node[lnodesmall] (7) at (-3.0000, 0.00000) {7};
				\draw[edge, col2](1)edge(3) (2)edge(3);
				\draw[edge, col1](1)edge(4) (2)edge(4);
				\draw[edge](4)edge(7) (2)edge(6) (5)edge(6) (5)edge(7) (1)edge(5) (3)edge(7);
			\node[lnodesmall] (5) at (5) {5};
			\end{scope}
			\begin{scope}[xshift=36cm]
				\node[lnodesmall] (1) at (-6.3640, 6.3640) {1};
				\node[lnodesmall] (2) at (3.2324, -11.791) {2};
				\node[lnodesmall] (3) at (-9.9800, -7.1610) {3};
				\node[lnodesmall] (4) at (-5.0555, -4.5579) {4};
				\node[lnodesmall] (5) at (0.00000, 0.00000) {5};
				\node[lnodesmall] (6) at (1.0000, 0.00000) {6};
				\node[lnodesmall] (7) at (-3.0000, 0.00000) {7};
				\draw[edge, col2](1)edge(3) (2)edge(3);
				\draw[edge, col1](1)edge(4) (2)edge(4);
				\draw[edge](4)edge(7) (2)edge(6) (5)edge(6) (5)edge(7) (1)edge(5) (3)edge(7);
			\node[lnodesmall] (5) at (5) {5};
			\end{scope}
		\end{tikzpicture}
	\caption{Example of a motion family $\IIplus$.}
	\label{fig:Q1_anim_IIplus}
\end{figure}
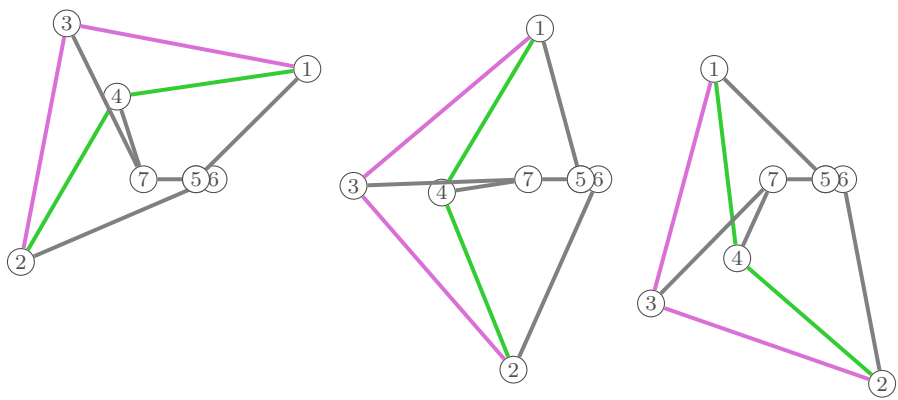

\section{Conclusion}
Possible cases of motions of a graph with algebraic constraints on edge lengths can be obtained
by the methods presented in this paper. 
They do, however, depend on computations and therefore on available computing power,
especially the elimination via Gr\"obner basis used in Section~\ref{sec:comparingLC}.
A full classification of all proper flexible labelings requires examining the cases one by one,
possibly using a computer algebra system.

\appendix 

\section{Active NAC-colorings of quadrilaterals}
\label{sec:NACsQuadrilateral}

Let $\lambda(v_1v_2)=\lambda_{12}$, $\lambda(v_2v_3)=\lambda_{23}$, $\lambda(v_3v_4)=\lambda_{34}$
and $\lambda(v_1v_4)=\lambda_{14}$
and assume that~$v_1v_2$ is the fixed edge.
For simplicity, we write $W_{i,j}$ and $Z_{i,j}$ instead of $W_{v_i,v_j}$ and $Z_{v_i,v_j}$.
 We determine the valuations $\nu$ such that $\nu(W_{1,2})=0$ and $\nu(W_{e})>0$ is positive for some other edge $e$.
 Namely, $\nu$ yields an active NAC-coloring by taking the threshold $\alpha=0$.
 Hence, the fixed edge $v_1v_2$ is always \blue{}.
 The conjugated NAC-colorings are automatically active \cite[Lemma~2.13]{movableGraphs}.
 The system of the equations \eqref{eq:equationCycles} has the form
 \begin{align*}
 	W_{1,2}= Z_{1,2}=\lambda_{12}\,, \quad  W_{2,3} Z_{2,3} &= \lambda_{23}^{2}\,, &
 	W_{3,4} Z_{3,4} = \lambda_{34}^{2}\,, \quad  W_{4,1} Z_{4,1} &= \lambda_{14}^{2}\,, \\
 	W_{1,2}+W_{2,3} + W_{3,4} + W_{4,1}  &= 0 \,, & Z_{1,2}+Z_{2,3} + Z_{3,4} + Z_{4,1} &= 0 \,.
 \end{align*}
 We express $W_{3,4}$ as a Laurent series in $s=W_{4,1}$, unless stated differently.
 A valuation is then given by the order.
 Hence, we obtain active NAC-colorings such that $v_1v_4$ is \red{}.
 By substituting
 \begin{align*}
 	W_{1,2} &= \lambda_{12}\,, & Z_{1,2} &= \lambda_{12}\,,  & W_{2,3} &= -s - W_{1,2} - W_{3,4}\,, \\
 	Z_{3,4} &= \frac{\lambda_{34}^{2}}{W_{3,4}}\,, & Z_{4,1} &= \frac{\lambda_{14}^{2}}{s}\,,  
 	   & Z_{2,3} &= -\frac{\lambda_{14}^{2}}{s} - Z_{1,2} - Z_{3,4}
 \end{align*}		 
 into $W_{2,3} Z_{2,3} = \lambda_{23}^{2}$, we obtain the following equation:
 \begin{equation}
 	\label{eq:quadrilateral}
 	( \lambda_{14}^{2} + \lambda_{12} s ) W_{3,4} ^2+( \lambda_{12} \lambda_{14}^{2} 
 	+ \lambda_{12}^{2} s - \lambda_{23}^{2} s + \lambda_{34}^{2} s + \lambda_{14}^{2} s 
 	+ \lambda_{12} s^{2} ) W_{3,4} + \lambda_{12} \lambda_{34}^{2} s + \lambda_{34}^{2} s^{2}=0
 \end{equation}
 We solve the equation for $W_{3,4}$ for various cases:
 
 \subsection*{Rhombus}
 If $\lambda_{12}=\lambda_{23}=\lambda_{34}=\lambda_{14}$, then \eqref{eq:quadrilateral} simplifies to 
 $(W_{3,4} + \lambda_{12}) (W_{3,4} + s) (\lambda_{12} + s) \lambda_{12} = 0$.
 If $-\lambda_{12}=s=W_{4,1}=-x_4-\ci y_4$,
 then $x_4=\lambda_{12}$ and $y_4=0$, i.e., the rhombus is degenerate so that vertices $v_2$ and $v_4$ coincide.
 Since $s=W_{4,1}$ is not transcendental in this case, we choose $W_{2,3}$ to have a positive valuation.
 Hence, the type of the NAC-coloring is L~(\colL{}).
 If $W_{3,4}=-s=-W_{4,1}$, then the rhombus degenerates so that $v_1$ and $v_3$ coincide and the type of the NAC-coloring is R (\colR{}).
 Finally, if $W_{3,4} =-\lambda_{12}$, then we get O (\colO{}).
 Note that vanishing any two factors of the equation simultaneously contradicts a motion.
 
 \subsection*{Parallelogram and Antiparallelogram}
 If $\lambda_{12}=\lambda_{34}$ and $\lambda_{23}=\lambda_{14}$, then \eqref{eq:quadrilateral} simplifies to
 $(W_{3,4} \lambda_{23}^{2} + W_{3,4} \lambda_{12} s + \lambda_{12}^{2} s + \lambda_{12} s^{2}) (W_{3,4} + \lambda_{12})=0$.
 If $W_{3,4}=- \lambda_{12}$, then the type of the active NAC-coloring of the parallel motion of the parallelogram is O (\colO{}).
 Otherwise, we have $W_{3,4}=-\frac{\lambda_{12}^{2} s + \lambda_{12} s^{2}}{\lambda_{23}^{2} + \lambda_{12} s}$;
 the parallelogram moves along the antiparallel irreducible component and by symmetry we have R (\colR{}) and~L~(\colL{}).
 
 \subsection*{Deltoid}
 Assume that the quadrilateral is an even deltoid, i.e., $\lambda_{12}=\lambda_{14}$ and $\lambda_{23}=\lambda_{34}$.
 The equation~\eqref{eq:quadrilateral} has the form
 $(W_{3,4}^{2} \lambda_{12} + W_{3,4} \lambda_{12}^{2} + W_{3,4} \lambda_{12} s + \lambda_{23}^{2} s) (\lambda_{12} + s)=0$.
 If $s=-\lambda_{12}$, then the deltoid degenerates,
 $s$ is not transcendental and by choosing  $W_{2,3}$ to have a positive valuation,
 we get a NAC-coloring of type L (\colL{}).
 The solutions corresponding to the non-degenerate motion are 
 \begin{equation*}
 W_{3,4} = -\frac{\lambda_{12}^{2} + \lambda_{12} u \pm \sqrt{\lambda_{12}^{4} + \lambda_{12}^{2} u^{2} + 2 \, {\left(\lambda_{12}^{3} - 2 \, \lambda_{12} \lambda_{23}^{2}\right)} u}}{2 \, \lambda_{12}}\,.
 \end{equation*}
 Using $\sqrt{1+t}=1 + \frac{t}{2}-\frac{t^2}{8}+O(t^3)$, they can be expressed as Laurent series:
 
 \begin{align*}
 	W_{3,4}=-\frac{\lambda_{23}^{2}}{\lambda_{12}^{2}}s + O(s^{2}) \quad
 	 \text{and} \quad W_{3,4}=-\lambda_{12} + \frac{\lambda_{23}^{2} - \lambda_{12}^{2}}{\lambda_{12}^{2}}s + O(s^{2})\,.
 \end{align*}
 This gives active NAC-colorings of type R (\colR{}) and O (\colO{}).
 The types of active NAC-colorings of an odd deltoid are L (\colL{}) and O (\colO{}).
 
 \subsection*{General case}
 If the lengths are general, then \eqref{eq:quadrilateral} has the following solutions:
 \begin{equation*}
 	W_{3,4}=-\frac{\lambda_{34}^{2}}{\lambda_{14}^{2}}s + O(s^{2}) \quad \text{and} \quad W_{3,4}=-\lambda_{12} + \left(\frac{\lambda_{23}^{2} - \lambda_{14}^{2}}{\lambda_{14}^{2}}\right)s + O(s^{2})\,.
 \end{equation*}
 Since we can also choose $W_{2,3}$ to have a positive valuation,
 we get altogether NAC-colorings of all three types L (\colL), O (\colO{)} and R (\colR{}).

\end{document}